\DeclareMathAlphabet\mathscr{U}{eus}{m}{n}
\SetMathAlphabet\mathscr{bold}{U}{eus}{b}{n}
\DeclareMathAlphabet\matheur{U}{eur}{m}{n}
\SetMathAlphabet\matheur{bold}{U}{eur}{b}{n}
\numberwithin{equation}{section}
\newtheorem{theorem}{Theorem}[section]
\newtheorem{proposition}[theorem]{Proposition}
\newtheorem{lemma}[theorem]{Lemma}
\newtheorem{corollary}[theorem]{Corollary}
\theoremstyle{definition}
\newtheorem{definition}[theorem]{Definition}
\newtheorem{example}[theorem]{Example}
\theoremstyle{remark}
\newtheorem{remarks}[theorem]{Remarks}
\newcommand{\htop}{\textup{h}_\textup{top}}
\newcommand{\hash}{{\scriptscriptstyle\#}}
\date{}
\begin{document}
\allowdisplaybreaks\frenchspacing

\title[Generators and Symbolic Representations of Algebraic Group Actions]{Intrinsic Ergodicity, Generators and Symbolic Representations\linebreak of Algebraic Group Actions}

\author{Hanfeng Li}
\thanks{The first author gratefully acknowledges partial support by the NSF grant DMS-1900746}

\address{Hanfeng Li: Department of Mathematics, SUNY at Buffalo, NY 14260-2900, USA}
\email{hfli@math.buffalo.edu}

\author{Klaus Schmidt}

\address{Klaus Schmidt: Mathematics Institute, University of Vienna, Oskar-Morgenstern-Platz 1, A-1090 Vienna, Austria}
\email{klaus.schmidt@univie.ac.at}

%\date{}

\dedicatory{Dedicated to Anatole M. Vershik on the occasion of his 90th birthday}

	\begin{abstract}
We construct natural symbolic representations of intrinsically ergodic, but not necessarily expansive, principal algebraic actions of countably infinite amenable groups and use these representations to find explicit generating partitions (up to null-sets) for such actions.
	\end{abstract}

\maketitle

\section{Introduction}
	\label{s:introduction}

Let $\Gamma $ be a countably infinite discrete group with integral and real group rings $\mathbb{Z}\Gamma \subset \mathbb{R}\Gamma $. Every $g\in\mathbb{R}\Gamma $ is written as a finite formal sum $g =\sum_{\gamma \in \Gamma }g_{\gamma }\cdot \gamma $ with $g_{\gamma }$ in $\mathbb{Z}$ or $\mathbb{R}$, respectively, for every $\gamma $. We write $\textup{supp}(g)=\{\gamma \in \Gamma \mid g_\gamma \ne 0\}$ for the \emph{support} of $g$ and set $g^+= \sum_{\gamma \in \Gamma }\max\{g_\gamma ,0\}\cdot \gamma $ and $g^-= \sum_{\gamma \in \Gamma }\min\{g_\gamma ,0\}\cdot \gamma $.

\smallskip For $g=\sum_{\gamma \in\Gamma }g_\gamma \cdot \gamma ,\,h=\sum_{\gamma \in\Gamma }h_\gamma \cdot \gamma $ in $\mathbb{R}\Gamma $ we denote by $g+h=\sum_{\gamma \in \Gamma }(g_\gamma +h_\gamma )\cdot \gamma $ their \emph{sum}, by $gh=\sum_{\gamma ,\delta \in\Gamma }g_\gamma h_\delta \cdot \gamma \delta $ their \emph{product}, and by $g^*=\sum_{\gamma \in\Gamma }g_\gamma \cdot \gamma ^{-1}$ and $h^*=\sum_{\gamma \in\Gamma }h_\gamma \cdot \gamma ^{-1}$ their \emph{adjoints}. The adjoint map $g\mapsto g^*$ is an \emph{involution} on $\mathbb{R}\Gamma $, i.e., $(gh)^*=h^*g^*$.

\smallskip An \emph{algebraic $\Gamma $-action} is a homomorphism $\tau \colon \Gamma \rightarrow \textup{Aut}(X)$ from $\Gamma $ to the group of continuous automorphisms of a compact metrizable abelian group $X$. If $\tau $ is such an algebraic $\Gamma $-action, then $\tau ^\gamma \in \textup{Aut}(X)$ denotes the image of $\gamma \in \Gamma $, and $\tau ^{\gamma \delta }=\tau ^\gamma \tau ^{\delta }$ for every $\gamma ,\delta \in \Gamma $. The action $\tau $ induces an action of $\mathbb{Z}\Gamma $ by group homomorphisms $\tau ^f\colon X\rightarrow X$, where $\tau ^f=\sum_{\gamma \in\Gamma }f_\gamma \tau ^\gamma $ for every $f=\sum_{\gamma \in\Gamma }f_\gamma \cdot \gamma \in\mathbb{Z}\Gamma $. Clearly, if $f,g\in\mathbb{Z}\Gamma $, then $\tau ^{fg}=\tau ^f \tau ^g$.

\smallskip Let $\hat{X}$ be the dual group of $X$. If $\hat{\tau }^\gamma $ is the automorphism of $\hat{X}$ dual to $\tau ^\gamma $, then the map $\hat{\tau }\colon \Gamma \rightarrow \textup{Aut}(\hat{X})$ satisfies that $\hat{\tau }^{\gamma \delta }=\hat{\tau }^{\delta }\hat{\tau }^\gamma $ for all $\gamma ,\delta \in \Gamma $. We denote by $\hat{\tau }^f\colon \hat{X}\rightarrow \hat{X}$ the group homomorphism dual to $\tau ^f$ and set $f\cdot a=\hat{\tau }^{f^*}a$ for every $f\in\mathbb{Z}\Gamma $ and $a\in \hat{X}$. The resulting map $(f,a)\mapsto f\cdot a$ from $\mathbb{Z}\Gamma \times \hat{X}$ to $\hat{X}$ satisfies that $(fg)\cdot a=f\cdot (g\cdot a)$ for all $f,g\in\mathbb{Z}\Gamma $ and turns $\hat{X}$ into a left module over the group ring $\mathbb{Z}\Gamma $. Conversely, if $M$ is a countable left module over $\mathbb{Z}\Gamma $, we set $X=\widehat{M}$ and put $\hat{\tau }^fa=f^*\cdot a$ for $f\in\mathbb{Z}\Gamma $ and $a\in M$. The maps $\tau ^f\colon \widehat{M}\rightarrow \widehat{M}$ dual to $\hat{\tau }^f$ define an action of $\mathbb{Z}\Gamma $ by homomorphisms of $\widehat{M}$, which in turn induces an algebraic action $\tau $ of $\Gamma $ on $X=\widehat{M}$.

The simplest examples of algebraic $\Gamma $-actions arise from $\mathbb{Z}\Gamma $-modules of the form $M=\mathbb{Z}\Gamma /(f)$, where $(f) = \mathbb{Z}\Gamma f$ is the principal left ideal generated by $f$: such actions are called \emph{principal}. For an explicit description of such actions we put $\mathbb{T}=\mathbb{R}/\mathbb{Z}$ and define the left and right shift-actions $\lambda $ and $\rho $ of $\Gamma $ on $\mathbb{T}^\Gamma $ by
	\begin{equation}
	\label{eq:lambda}
(\lambda ^\gamma x)_{\delta }=x_{\gamma ^{-1}\delta }\quad \textup{and}\quad (\rho ^\gamma x)_{\delta }=x_{\delta \gamma }
	\end{equation}
for every $\gamma \in \Gamma $ and $x=(x_{\delta })_{\delta \in \Gamma }\in\mathbb{T}^\Gamma $. The actions $\lambda $ and $\rho $ extend to $\mathbb{Z}\Gamma $-actions on $\mathbb{T}^\Gamma $ given by
	\begin{displaymath}
\lambda ^f=\textstyle\sum_{\gamma \in\Gamma }f_\gamma \lambda ^\gamma ,\qquad \rho ^f=\textstyle\sum_{\gamma \in\Gamma }f_\gamma \rho ^\gamma ,
	\end{displaymath}
for every $f= \sum_{\gamma \in\Gamma }f_\gamma \cdot \gamma \in\mathbb{Z}\Gamma $. These $\mathbb{Z}\Gamma $-actions obviously commute: for every $f,g\in \mathbb{Z}\Gamma $ and $x\in \mathbb{T}^\Gamma $,
	\begin{displaymath}
(\lambda ^f\circ \rho ^g )\,x = (\rho ^g\circ \lambda ^f)\,x .
	\end{displaymath}

\smallskip The pairing $\langle f,x\rangle =\sum_{\gamma \in \Gamma }f_\gamma x_\gamma = (\rho ^fx)_{1_\Gamma }$ with $f=\sum_{\gamma \in \Gamma }f_\gamma \cdot \gamma \in\mathbb{Z}\Gamma $ and $x=(x_\gamma )\in\mathbb{T}^\Gamma $, identifies $\mathbb{Z}\Gamma $ with the dual group $\widehat{\mathbb{T}^\Gamma }$ of $\mathbb{T}^\Gamma $ and has the property that
	\begin{align*}
\langle h,\rho ^fx\rangle &=\smash[t]{\Bigl\langle h,\sum\nolimits_{\delta \in \Gamma }f_{\delta }\rho ^{\delta }x\Bigr\rangle =\sum\nolimits_{\gamma \in \Gamma }h_\gamma \sum\nolimits_{\delta \in \Gamma }f_{\delta }x_{\gamma \delta }}
	\\
&=\sum\nolimits_{\gamma \in \Gamma }\sum\nolimits_{\delta \in \Gamma }h_{\gamma \delta ^{-1}}f_{\delta }x_\gamma =\sum\nolimits_{\gamma \in \Gamma }(hf)_\gamma x_\gamma =\langle hf,x\rangle
	\end{align*}
for every $f,h\in \mathbb{Z}\Gamma $ and $x\in \mathbb{T}^\Gamma $. Every $f\in \mathbb{Z}\Gamma $ defines a $\lambda $-invariant closed subgroup
	\begin{equation}
	\label{eq:Xf}
	\begin{aligned}
X_f &=\ker \rho ^f = \{x\in \mathbb{T}^\Gamma \mid \rho ^fx=0\}
	\\
&= \{x\in \mathbb{T}^\Gamma \mid \langle h,\rho ^fx \rangle =\langle hf,x\rangle = 0\enspace \textup{for every}\enspace h\in \mathbb{Z}\Gamma \}= (f)^\perp \subset \widehat{\mathbb{Z}\Gamma } =\mathbb{T}^\Gamma .
	\end{aligned}
	\end{equation}
We denote by $\lambda _f = \lambda _{X_f}$ the restriction of $\lambda $ to $X_f$ and note that the normalized Haar measure $\mu _f$ on $X_f$ is invariant under $\lambda _f$.

	\begin{definition}
	\label{d:principal}
For every $f\in \mathbb{Z}\Gamma $ we call the left shift-action $\lambda _f$ on the probability space $(X_f,\mu _f)$ the \emph{principal algebraic $\Gamma $-action} defined by $f$.
	\end{definition}

Dynamical properties of algebraic actions of countably infinite groups --- and, in particular, of principal actions --- have been investigated at various levels of generality (cf. e.g., \cite{DSAO}, \cite{Chung-Li}, \cite{Li-Thom}, or \cite{Hayes}). In this paper we focus on symbolic representations of principal algebraic actions of countably infinite amenable groups and on generators of such actions arising from these representations.

\medskip Symbolic representations of algebraic actions have a long history. The first such representations arose from geometrically constructed Markov partitions around 1967--1970 (cf. \cite{Sinai}, \cite{Bowen}) and helped to provide a crucial link between smooth and symbolic dynamics. A different approach to symbolic representations of toral automorphisms had its origins in the paper \cite{Vershik} by Vershik from 1992, where he represented the hyperbolic toral automorphism \smash{$\bigl(
	\begin{smallmatrix}
1&1
	\\
1&0
	\end{smallmatrix}
\bigr)$} by the \emph{Golden Mean} shift by using homoclinic points rather than Markov partitions. Vershik's original construction was subsequently extended to arbitrary hyperbolic toral and solenoidal automorphisms in \cite{K-V} and \cite{L-B}, and to the ‘homoclinic’ construction of symbolic covers of expansive\footnote{\,A continuous $\Gamma $-action $\tau $ on a compact metrizable space $Y$ with compatible metric $\mathsf{d}$ is \emph{expansive} if there exists a $\delta >0$ such that $\sup_{\gamma \in \Gamma }\mathsf{d}(\tau ^\gamma y,\tau ^\gamma y')\ge \delta $ whenever $y,y'$ are distinct points in $Y$.
	\label{f:expansive}
} principal algebraic $\mathbb{Z}^d$-actions (cf. \cite{E-S}). What these constructions have in common is that they use a \emph{summable homoclinic point} $w\in X_f$ of a principal algebraic action $\lambda _f$ of a countably infinite discrete group $\Gamma $ to define a shift-equivariant surjective map $\xi _w\colon \ell ^\infty (\Gamma ,\mathbb{Z})\to X_f$, and to restrict this map to a suitable compact shift-invariant subset $\mathcal{V}\subset \ell ^\infty (\Gamma ,\mathbb{Z})$ (cf. e.g. \cite{E-S}, \cite{Goell1}, or \cite{Lind-S-2}).

While expansive principal algebraic actions always have summable homoclinic points permitting such a construction, this is generally not the case for nonexpansive actions (cf. \cite{Chung-Li}, \cite{Deninger-S}, \cite{homoclinic}). In this paper we obviate the need for summable homoclinic points and show directly that, for every countably infinite discrete amenable group $\Gamma $ and every $f\in \mathbb{Z}\Gamma $ for which the principal algebraic action $\lambda _f$ on $(X_f,\mu _f)$ is \emph{intrinsically ergodic} there exists a natural isomorphism $(\textup{mod}\;\mu _f)$ of $\lambda _f$ with the left shift action $\bar{\lambda }$ of $\Gamma $ on a closed, shift-invariant subset $\bar{Z}_f$ of the symbolic space $\{-\|f^-\|_1,\dots ,\|f^+\|_1\}^\Gamma $, furnished with a shift-invariant Borel probability measure $\nu _f^\hash$ (Theorem \ref{t:symbolic}). As an obvious consequence of this isomorphism one obtains that the `alphabet' $\mathcal{B}_f\subsetneq \{-\|f^-\|_1,\dots ,\|f^+\|_1\}$ of $\bar{Z}_f$ determines a natural generator\footnote{\,If $\tau $ is a measure-preserving action of a countably infinite group $\Gamma $ on a standard probability space $(Y,\mathcal{B}_Y,\mu )$, a countable Borel partition $\mathcal{C}$ of $Y$ is a \emph{generator} for $\tau $ if the smallest $\tau $-invariant sigma-algebra $\mathcal{T}\subset \mathcal{B}_Y$ containing $\mathcal{C}$ is equal to $\mathcal{B}_Y$ $(\textup{mod}\;\mu )$, i.e. up to $\mu $-null sets.} for $\lambda _f$ on $(X_f, \mu _f)$ (Corollary \ref{c:B}). As a further corollary of this construction we see that the partition $\mathcal{C}_f = \{C_j\mid j=0,\dots ,\|f\|_1-1\}$ of $X_f$, defined by
	\begin{displaymath}
C_j = \bigl\{x = (x_\gamma )_{\gamma \in \Gamma } \in X_f \mid {\scriptstyle\tfrac{j}{\|f\|_1}}\le x_{1_\Gamma } < {\scriptstyle\tfrac{j+1}{\|f\|_1}} \enspace (\textup{mod}\;1)\bigr\}
	\end{displaymath}
for $j=0,\dots ,\|f\|_1-1$, is a generator $(\textup{mod}\;\mu _f)$ for $\lambda _f$ (Corollary \ref{c:C}). If $\lambda _f$ is \emph{expansive}, $\mathcal{C}_f$ is obviously a generator without any conditions on $\Gamma $ (cf. Subsection \ref{sss:expansive}), but for nonexpansive actions this result is nontrivial.

In Section \ref{s:examples} we present examples of intrinsically ergodic principal algebraic actions $\lambda _f$, $f\in \mathbb{Z}\Gamma $, of countably infinite discrete amenable groups $\Gamma $. If $\Gamma =\mathbb{Z}^d$, $d\ge1$, or if $\Gamma $ is arbitrary and $\lambda _f$ is expansive, the situation is well-understood (Subsections \ref{ss:Z-d} or \ref{sss:expansive}). For nonexpansive principal algebraic actions intrinsic ergodicity is a more elusive property. A sufficient condition for intrinsic ergodicity is that the group $\Delta ^1(X_f)$ of summable homoclinic points of $\lambda _f$ is dense in the group $X_f$ carrying the action (Proposition \ref{p:homoclinic}). In Theorem \ref{T-Harmonic} we verify the latter condition for \emph{well-balanced} polynomials $f\in \mathbb{Z}\Gamma $, provided that $\Gamma $ is not virtually $\mathbb{Z}$ or $\mathbb{Z}^2$ and the center of $\Gamma $ contains an element of infinite order.

	\section{Linearization of principal algebraic actions}
	\label{s:linearization}

Let $\Gamma $ be a countably infinite discrete group, and let $\ell ^\infty (\Gamma ,\mathbb{R})$ be the space of all bounded maps $v\colon \Gamma \to \mathbb{R}$, furnished with the norm $\|v\|_\infty =\sup_{\gamma \in \Gamma }|v_\gamma |$. We write $\eta \colon \ell ^\infty (\Gamma ,\mathbb{R})\to \mathbb{T}^\Gamma $ for the weak$^*$-continuous map defined by
	\begin{equation}
	\label{eq:eta}
\eta(w)_\gamma = w_\gamma \;\text{(mod 1)},\,\gamma \in \Gamma ,
	\end{equation}
and define the shift-actions $\bar{\lambda }$ and $\bar{\rho }$ of $\Gamma $ on $\ell ^\infty (\Gamma ,\mathbb{R})$ as in \eqref{eq:lambda} by
	\begin{equation}
	\label{eq:bar-shifts}
(\bar{\lambda }^\gamma v)_{\delta }= v_{\gamma ^{-1}\delta },\enspace \enspace (\bar{\rho }^\gamma v)_{\delta }= v_{\delta \gamma },
	\end{equation}
for every $v=(v_\delta )_{\delta \in \Gamma }\in\ell ^\infty (\Gamma ,\mathbb{R})$ and $\gamma \in\Gamma $. Again we extend these $\Gamma $-actions to $\mathbb{Z}\Gamma $-actions on $\ell ^\infty (\Gamma ,\mathbb{R})$ by setting
	\begin{displaymath}
\bar{\lambda }^h=\sum\nolimits_{\gamma \in\Gamma }h_\gamma \bar{\lambda }^\gamma \quad \textup{and}\quad \bar{\rho }^h=\sum\nolimits_{\gamma \in\Gamma }h_\gamma \bar{\rho }^\gamma
	\end{displaymath}
for every $h=\sum_{\gamma \in \Gamma }h_\gamma \cdot \gamma \in \mathbb{Z}\Gamma $. These actions correspond to the usual convolutions
	\begin{equation}
	\label{eq:convolutions}
\bar{\lambda }^hv=h\cdot v,\qquad \bar{\rho }^hv=v\cdot h^*,
	\end{equation}
for $h\in \mathbb{Z}\Gamma $ and $v \in \mathbb{R}\Gamma $, and extend further to $h\in \ell ^1(\Gamma ,\mathbb{R})$ and $v\in \ell ^\infty (\Gamma ,\mathbb{R})$.

\smallskip We fix a nonzero element $f\in \mathbb{Z}\Gamma $ and consider the left shift action $\lambda _f$ on the compact group $X_f\subset \mathbb{T}^\Gamma $ defined in \eqref{eq:lambda} -- \eqref{eq:Xf}. The space
	\begin{equation}
	\label{eq:Wf}
	\begin{aligned}
W_f\coloneqq\eta^{-1}(X_f)&= \{w\in\ell ^\infty (\Gamma ,\mathbb{R})\mid \eta(w)\in X_f\}
	\\
&=\{w\in\ell ^\infty (\Gamma ,\mathbb{R})\mid \bar{\rho }^fw = w\cdot f^*\in\ell ^\infty (\Gamma ,\mathbb{Z})\}
	\end{aligned}
	\end{equation}
is the \emph{linearization} of $X_f$, and the restriction of $\bar{\lambda }$ to $W_f$ is the \emph{linearization} of $\lambda _f$. We set
	\begin{equation}
	\label{eq:Yf-Zf}
Y_f= W_f \cap [0,1)^\Gamma \subset \ell ^\infty (\Gamma ,\mathbb{R}), \qquad Z_f = \bar{\rho }^f(Y_f)\subset \ell ^\infty (\Gamma ,\mathbb{Z}),
	\end{equation}
write $\bar{Y}_f$ and $\bar{Z}_f$ for the weak$^*$ closures of $Y_f$ and $Z_f$ in $\ell ^\infty (\Gamma ,\mathbb{R})$. Since $0\le y_\gamma <1$ for every $y\in Y_f$ and $\gamma \in \Gamma $, it is clear that
	\begin{displaymath}
c_f^- \le (\bar{\rho }^fy)_\gamma \le c_f^+
	\end{displaymath}
for every $y\in Y_f$ and $\gamma \in \Gamma $, where
	\begin{displaymath}
c_f^- = \min\,\{0,1-\|f^-\|_1\}\quad \textup{and}\quad c_f^+=\max\,\{0,\|f^+\|_1-1\}.
	\end{displaymath}
It follows that
	\begin{equation}
	\label{eq:bounds}
Z_f\subseteq \bar{Z}_f \subseteq \{c_f^-,\dots ,c_f^+\}^\Gamma .
	\end{equation}

Finally we denote by
	\begin{equation}
	\label{eq:Kf}
K_f=\{w\in\ell ^\infty (\Gamma ,\mathbb{R})\mid \bar{\rho }^fw=0\}\subset W_f
	\end{equation}
the kernel of $\bar{\rho }^f$ in $\ell ^\infty (\Gamma ,\mathbb{R})$. By \cite{Deninger-S}*{Theorem 3.2}, the action $\lambda _f$ on $X_f$ is expansive if and only if $K_f = \{0\}$.

If there is any danger of confusion we denote the restrictions of $\bar{\lambda }$ to the $\bar{\lambda }$-invariant sets $\bar{Y}_f$, $\bar{Z}_f$, and $K_f$ by $\bar{\lambda }_{\bar{Y}_f}$, $\bar{\lambda }_{\bar{Z}_f}$ and $\bar{\lambda }_{K_f}$, respectively. The map $\eta \colon \ell ^\infty (\Gamma ,\mathbb{R})\to \mathbb{T}^\Gamma $ in \eqref{eq:eta} induces a left shift-equivariant, continuous, surjective map from $\bar{Y}_f$ to $X_f$ whose restriction to $Y_f$ is bijective, and $\bar{\rho }^f$ intertwines the $\Gamma $-actions $\bar{\lambda }_{\bar{Y}_f}$ and $\bar{\lambda }_{\bar{Z}_f}$.

	\begin{proposition}
	\label{p:conjugacy}
Let $\Gamma $ be a countably infinite discrete group, $0\ne f\in \mathbb{Z}\Gamma $, and let $W_f, \bar{Y}_f, \bar{Z}_f, K_f$ be the closed, $\bar{\lambda }$-invariant subsets of $\ell ^\infty (\Gamma ,\mathbb{R})$ defined in \eqref{eq:Wf} -- \eqref{eq:Kf}. Put $\tilde{Z}_f= \bar{Z}_f\times K_f$, and denote by $\tilde{\lambda } = \bar{\lambda }_{\bar{Z}_f}\times \bar{\lambda }_{K_f}$ the product $\Gamma $-action on $\tilde{Z}_f$. Then there exists, for every $\bar{\lambda }$-invariant Borel probability measure $\nu $ on $\bar{Y}_f$, a $\tilde{\lambda }$-invariant Borel probability measure $\tilde{\nu }$ on $\tilde{Z}_f$ with the following properties:
	\begin{enumerate}
	\item
$\pi ^{(1)}_*\tilde{\nu }= \bar{\rho }^f_*\nu \eqqcolon \nu ^\hash$, where $\pi ^{(1)}\colon \tilde{Z}_f\to \bar{Z}_f$ is the first coordinate projection;
	\item
$\tilde{\nu }(\bar{Z}_f\times B_2(K_f))=1$, where $B_r(K_f)=\{w\in K_f\mid \|w\|_\infty \le r\}$ for every $r\ge0$;
	\item
The $\Gamma $-actions $\bar{\lambda }$ on $(\bar{Y}_f,\nu )$ and $\tilde{\lambda }$ on $(\tilde{Z}_f,\tilde{\nu })$ are measurably conjugate.
	\end{enumerate}
	\end{proposition}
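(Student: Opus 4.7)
The plan is to use a Borel cross-section of the $\Gamma$-equivariant factor map $\bar{\rho}^f\colon\bar{Y}_f\to\bar{Z}_f$ to identify each $y\in\bar{Y}_f$ with a pair in $\bar{Z}_f\times K_f$, and to define $\tilde{\nu}$ as the resulting pushforward of $\nu$. The main technical effort then goes into making this cross-section $\Gamma$-equivariant modulo $\nu^\hash$-null sets, which is precisely what turns the identification into a measurable $\Gamma$-conjugacy.

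First I would record that $\bar{\rho}^f\colon\bar{Y}_f\to\bar{Z}_f$ is a continuous, $\Gamma$-equivariant surjection between compact metrizable spaces: continuity is from the weak$^*$-continuity of $\bar{\rho}^f$, compactness since $\bar{Y}_f\subseteq[0,1]^\Gamma$, and surjectivity because $\bar{\rho}^f(\bar{Y}_f)$ is compact and contains the dense subset $Z_f=\bar{\rho}^f(Y_f)$. The Kuratowski--Ryll-Nardzewski selection theorem then produces a Borel cross-section $\sigma\colon\bar{Z}_f\to\bar{Y}_f$ of $\bar{\rho}^f$, with $\|\sigma(z)\|_\infty\le 1$ for every $z$. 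Define $\Phi\colon\bar{Y}_f\to\bar{Z}_f\times K_f$ by $\Phi(y)=(\bar{\rho}^fy,\,y-\sigma(\bar{\rho}^fy))$. Since $\bar{\rho}^f(y-\sigma(\bar{\rho}^fy))=\bar{\rho}^fy-\bar{\rho}^fy=0$, the second coordinate lies in $K_f$, and $\|y-\sigma(\bar{\rho}^fy)\|_\infty\le\|y\|_\infty+\|\sigma(\bar{\rho}^fy)\|_\infty\le 2$. Hence $\Phi$ is Borel and injective, with image in $\bar{Z}_f\times B_2(K_f)$ and measurable left inverse $(z,k)\mapsto\sigma(z)+k$. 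Setting $\tilde{\nu}:=\Phi_*\nu$, property~(1) follows from $\pi^{(1)}\circ\Phi=\bar{\rho}^f$, and property~(2) from the image of $\Phi$ being contained in $\bar{Z}_f\times B_2(K_f)$.

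Property~(3) is the heart of the matter. A direct computation yields the identity $\tilde{\lambda}^\gamma\Phi(y)=\Phi(\bar{\lambda}^\gamma y)+(0,c_\gamma(\bar{\rho}^fy))$, where $c_\gamma(z):=\sigma(\bar{\lambda}^\gamma z)-\bar{\lambda}^\gamma\sigma(z)\in K_f$ is the $K_f$-valued $1$-cocycle measuring the failure of equivariance of $\sigma$. Thus $\Phi$ is a.e.\ $\Gamma$-equivariant, and $\tilde{\nu}$ is $\tilde{\lambda}$-invariant, precisely when $c$ is a $\nu^\hash$-a.e.\ $K_f$-valued coboundary. The plan is to correct $\sigma$ by a Borel function $\alpha\colon\bar{Z}_f\to K_f$ satisfying $\alpha(\bar{\lambda}^\gamma z)-\bar{\lambda}^\gamma\alpha(z)=c_\gamma(z)$ for $\nu^\hash$-a.e.\ $z$ and every $\gamma\in\Gamma$; replacing $\sigma$ by $\sigma+\alpha$ then makes the revised $\Phi$ a.e.\ $\Gamma$-equivariant, so that $\tilde{\nu}=\Phi_*\nu$ is $\tilde{\lambda}$-invariant and $\Phi$ itself is the measurable $\Gamma$-conjugacy that establishes~(3).

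The main obstacle is the construction of $\alpha$. Although $c$ is formally the coboundary of $-\sigma$ viewed as an $\ell^\infty(\Gamma,\mathbb{R})$-valued map, this trivialization does not take values in $K_f$. Producing a $K_f$-valued trivialization $\nu^\hash$-a.e.\ is a measurable cohomology problem on the countable $\bar{\lambda}$-orbit equivalence relation on $\bar{Z}_f$; no amenability of $\Gamma$ is assumed, so the selection must be carried out by a descriptive set-theoretic or disintegration argument that exploits the countability of $\Gamma$, the abelian Polish structure of $K_f$ as a weak$^*$-closed subspace of $\ell^\infty(\Gamma,\mathbb{R})$, and the algebraic origin of the cocycle.
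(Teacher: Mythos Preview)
Your outline coincides with the paper's approach step for step: the same Borel section (your $\sigma$ is the paper's $\zeta$), the same injective Borel map $\Phi=\theta_1$, and the same reduction of (3) to trivializing the $K_f$-valued cocycle $c_\gamma(z)=\sigma(\bar\lambda^\gamma z)-\bar\lambda^\gamma\sigma(z)$. But you stop precisely where the argument has content. You correctly flag the construction of $\alpha$ as ``the main obstacle'' and then gesture at ``a descriptive set-theoretic or disintegration argument'' without carrying one out; as written, the proposal does not establish (3), and hence does not establish that $\tilde\nu$ is $\tilde\lambda$-invariant at all.

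The missing step is short once one sees it, and no abstract cohomology-vanishing result is needed. Push $\nu$ forward by your (uncorrected) $\Phi$ to a measure $\tilde\nu^{(1)}$ on $\bar Z_f\times K_f$; since both $y$ and $\sigma(\bar\rho^f y)$ lie in $[0,1]^\Gamma$, this measure is actually supported in $\bar Z_f\times B_1(K_f)$. Disintegrate $\tilde\nu^{(1)}$ over $\bar Z_f$ into conditional measures $(\tilde\nu^{(1)}_z)_{z\in\bar Z_f}$ on $B_1(K_f)$ and set
\[
b(z)\;=\;\int_{K_f} v\,d\tilde\nu^{(1)}_z(v)\;\in\;B_1(K_f),
\]
the integral taken coordinatewise. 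The measure $\tilde\nu^{(1)}$ is invariant under the skew action $(z,v)\mapsto(\bar\lambda^\gamma z,\,\bar\lambda^\gamma v-c_\gamma(z))$ (this is exactly your identity $\tilde\lambda^\gamma\Phi(y)=\Phi(\bar\lambda^\gamma y)+(0,c_\gamma(\bar\rho^f y))$ rewritten), which on conditionals reads $\tilde\nu^{(1)}_{\bar\lambda^\gamma z}=(\bar\lambda^\gamma(\cdot)-c_\gamma(z))_*\tilde\nu^{(1)}_z$ for $\nu^\hash$-a.e.\ $z$. Taking barycenters on both sides and using that $\bar\lambda^\gamma$ acts \emph{linearly} on $K_f$ yields $b(\bar\lambda^\gamma z)=\bar\lambda^\gamma b(z)-c_\gamma(z)$, so $c$ is a coboundary $(\textup{mod}\;\nu^\hash)$. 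Replacing $\sigma$ by $\sigma+b$ makes $\Phi$ equivariant $\nu$-a.e., and the bound $b(z)\in B_1(K_f)$ is exactly what keeps the corrected second coordinate inside $B_2(K_f)$, giving (2). (Incidentally, the coboundary equation you need has the opposite sign: to kill $c$ one requires $\alpha(\bar\lambda^\gamma z)-\bar\lambda^\gamma\alpha(z)=-c_\gamma(z)$, so $\alpha=b$.)
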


	\begin{proof}
Since $\bar{\rho }^f$ is continuous, \cite{Partha}*{Theorem I.4.2} shows that there exists a Borel map $\zeta \colon \bar{Z}_f\rightarrow \bar{Y}_f$ with $\bar{\rho }^f\circ \zeta (z)=z$ for every $z\in \bar{Z}_f$. For every $z\in \bar{Z}_f$ and $\gamma \in \Gamma $ we set
	\begin{equation}
	\label{eq:cocycle}
c(\gamma ,z)=\zeta \circ \bar{\lambda }^\gamma (z)-\bar{\lambda }^\gamma \circ \zeta (z)\in B_1(K_f).
	\end{equation}
Then
	\begin{displaymath}
c(\gamma \delta ,z)=c(\gamma ,\bar{\lambda }^\delta z)+\bar{\lambda }^\gamma c(\delta ,z)
	\end{displaymath}
for every $\gamma ,\delta \in \Gamma $ and $z\in \bar{Z}_f$, i.e., the Borel map $c\colon \Gamma \times \bar{Z}_f\rightarrow K_f$ is a cocycle taking values in $B_1(K_f)$. We define a Borel action $\tilde{\lambda }_1$ of $\Gamma $ on $\tilde{Z}_f$ by setting
	\begin{displaymath}
	\label{eq:actions}
\tilde{\lambda }_1^\gamma (z,v) = (\bar{\lambda }^\gamma z,\bar{\lambda }^\gamma v-c(\gamma ,z))
	\end{displaymath}
for every $(z,v)\in\tilde{Z}_f$, and consider the injective Borel map $\theta _1\colon \bar{Y}_f\rightarrow \bar{Z}_f\times B_1(K_f)\subset \tilde{Z}_f$ given by
	\begin{equation}
	\label{eq:theta-1}
\theta _1(w)=(\bar{\rho }^fw,w-\zeta \circ \bar{\rho }^f(w))
	\end{equation}
for every $w\in \bar{Y}_f$. Then
	\begin{equation}
	\label{eq:equi-11}
\theta _1\circ \bar{\lambda }^\gamma =\tilde{\lambda }_1^\gamma \circ \theta _1
	\end{equation}
for every $\gamma \in\Gamma $.

\smallskip Let $\nu $ be a $\bar{\lambda }$-invariant Borel probability measure on $\bar{Y}_f$ and let $\nu ^\hash=\bar{\rho }^f_*\nu $. The probability measure $\tilde{\nu }^{(1)} = (\theta _1)_*\nu $ is $\tilde{\lambda }_1$-invariant by \eqref{eq:equi-11}, and is supported in the weak$^*$-compact and metrizable set $\bar{Z}_f\times B_1(K_f)\subset \tilde{Z}_f$. Furthermore, $\pi ^{(1)}_*\tilde{\nu }^{(1)} = \nu ^\hash$, where $\pi ^{(1)}\colon \tilde{Z}_f \to \bar{Z}_f$ is the first coordinate projection. We decompose $\tilde{\nu }^{(1)}$ over $\bar{Z}_f$ by choosing a Borel measurable family $\tilde{\nu }^{(1)}_z,\,z\in \bar{Z}_f$, of Borel probability measures on $K_f$ with $\tilde{\nu }^{(1)}_z(B_1(K_f))=1$ for every $z\in \bar{Z}_f$, and with
	\begin{displaymath}
\int g(z,v)\,d\tilde{\nu }^{(1)}(z,v)=\int_{\bar{Z}_f}\int_{K_f}g(z,v)\, d\tilde{\nu }^{(1)}_z(v)\,d\nu ^\hash(z)
	\end{displaymath}
for every bounded Borel map $g\colon \tilde{Z}_f \rightarrow \mathbb{R}$. Since $\tilde{\nu }^{(1)}$ is $\tilde{\lambda }_1$-invariant,
	\begin{equation}
	\label{eq:transform}
\int h(v)\,d\tilde{\nu }^{(1)}_z(v)=\int h(\bar{\lambda }^{\gamma }v-c(\gamma ,\bar{\lambda }^{\gamma ^{-1}}z))\,d\tilde{\nu }^{(1)}_{\bar{\lambda }^{\gamma ^{-1}}z}(v)
	\end{equation}
for every bounded Borel map $h\colon K_f\rightarrow \mathbb{R}$, every $\gamma \in \Gamma $, and $\nu ^\hash\textit{-a.e.}\;z\in \bar{Z}_f$.

\smallskip Define a Borel map $b\colon \bar{Z}_f\to K_f$ by setting $b(z) = \int_{K_f} v\,d\tilde{\nu }^{(1)}_z(v) \in B_1(K_f)$ for every $z\in \bar{Z}_f$, where the integral is taken coordinate-wise (or, equivalently, in the weak$^*$-topology) on $\ell ^\infty (\Gamma ,\mathbb{R})$. Equation \eqref{eq:transform} shows that
	\begin{align*}
b(z)&= \int_{K_f} v\,d\tilde{\nu }^{(1)}_z(v) = \int_{K_f} (\bar{\lambda }^{\gamma }v - c(\gamma ,\bar{\lambda }^{\gamma ^{-1}}z)) \,d\tilde{\nu }^{(1)}_{\bar{\lambda }^{\gamma ^{-1}}z}(v)
	\\
&= \int_{K_f} \bar{\lambda }^{\gamma }v \,d\tilde{\nu }^{(1)}_{\bar{\lambda }^{\gamma ^{-1}}z}(v) - c(\gamma ,\bar{\lambda }^{\gamma ^{-1}}z) =\bar{\lambda }^{\gamma }b(\bar{\lambda }^{\gamma ^{-1}}z) - c(\gamma ,\bar{\lambda }^{\gamma ^{-1}}z)
	\end{align*}
for $\nu ^\hash\textit{-a.e.}\; z\in \bar{Z}_f$. If we replace $z$ by $\bar{\lambda }^{\gamma }z$ in the last equation we see that
	\begin{equation}
	\label{eq:coboundary}
c(\gamma ,\cdot )=\bar{\lambda }^\gamma \circ b-b\circ \bar{\lambda }^\gamma \quad \nu ^\hash\textit{-a.e.},\enspace \textup{for every}\enspace \gamma \in\Gamma .
	\end{equation}
In other words, the cocycle $c\colon \Gamma \times \bar{Z}_f\to K_f$ is a coboundary $(\textup{mod}\; \nu ^\hash)$ with Borel cobounding function $b\colon \bar{Z}_f\rightarrow B_1(K_f)$.

\smallskip Let $\theta _2\colon \tilde{Z}_f\rightarrow \tilde{Z}_f$ be the bijection given by
	\begin{displaymath}
\theta _2(z,v)=(z,v-b(z))
	\end{displaymath}
for every $(z,v)\in \tilde{Z}_f$, and put $\tilde{\nu }=(\theta _2)_*\tilde{\nu }^{(1)}=(\theta _2\circ \theta _1)_*\nu $. Then $\pi ^{(1)}_*\tilde{\nu }= \pi ^{(1)}_*\tilde{\nu }^{(1)} = \nu ^\hash$. We set
	\begin{equation}
	\label{eq:theta}
\theta = \theta _2\circ\theta _1 \colon \bar{Y}_f \to \tilde{Z}_f
	\end{equation}
and obtain that
	\begin{displaymath}
\theta (w) = \bigl(\bar{\rho }^fw, w-\zeta \circ \bar{\rho }^f(w) - b\circ \bar{\rho }^f(w)\bigr)\enspace \textup{for every}\enspace w\in \bar{Y}_f.
	\end{displaymath}
\smallskip For $\nu \textit{-a.e.}\; w\in \bar{Y}_f$ we have that
	\begin{equation}
	\label{eq:equi-2}
	\begin{aligned}
\tilde{\lambda }^\gamma \circ \theta (w) &= \bigl(\bar{\lambda }^\gamma \circ \bar{\rho }^f(w), \bar{\lambda }^\gamma w - \bar{\lambda }^\gamma \circ \zeta \circ \bar{\rho }^f(w) - \bar{\lambda }^\gamma \circ b\circ \bar{\rho }^f(w)\bigr)
	\\
&=\bigl(\bar{\rho }^f(\bar\lambda ^\gamma w), \bar{\lambda }^\gamma w + c(\gamma ,\bar{\rho }^fw) - \zeta \circ \bar{\rho }^f\circ \bar{\lambda }^\gamma (w)
	\\
& \hspace{20mm}- c(\gamma ,\bar{\rho }^fw) - b\circ \bar{\rho }^f\circ \bar{\lambda }^\gamma (w)\bigr)\qquad \textup{(by \eqref{eq:cocycle} and \eqref{eq:coboundary})}
	\\
&=\bigl(\bar{\rho }^f(\bar{\lambda }^\gamma w), \bar{\lambda }^\gamma w - \zeta \circ \bar{\rho }^f (\bar{\lambda }^\gamma w) - b\circ \bar{\rho }^f(\bar{\lambda }^\gamma w)\bigr) =\theta \circ \bar{\lambda }^\gamma (w),
	\end{aligned}
	\end{equation}
which proves that the $\Gamma $-actions $\bar{\lambda }$ on $(\bar{Y}_f,\nu )$ and $\tilde{\lambda }$ on $(\tilde{Z}_f,\tilde{\nu })$ are measurably conjugate, as claimed in (3).
	\end{proof}

In the next section we show that, if $\Gamma $ is amenable and $\lambda _f$ has finite and completely positive entropy, there exist unique $\bar{\lambda }$-invariant Borel probability measures $\nu _f$ on $\bar{Y}_f$ and $\nu _f^\hash $ on $\bar{Z}_f$ such that the principal algebraic $\Gamma $-action $\lambda _f$ on $(X_f,\mu _f)$ is measurably conjugate to the $\Gamma $-actions $\bar{\lambda }_{\bar{Y}_f}$ and $\bar{\lambda }_{\bar{Z}_f}$ on $(\bar{Y}_f, \nu _f)$ and $(\bar{Z}_f, \nu _f^\hash)$, respectively (cf. Theorem \ref{t:symbolic}).

\section{Symbolic representation of intrinsically ergodic principal algebraic actions}
	\label{s:symbolic}

Throughout this section we assume that $\Gamma $ is a countably infinite discrete amenable group and that $f\in\mathbb{Z}\Gamma $ is nonzero. We denote by $\mu _f$ the normalized Haar measure on $X_f$ and define $\bar{Y}_f\subset W_f\subset \ell ^\infty (\Gamma ,\mathbb{R})$ and $\bar{Z}_f=\bar{\rho }^f(\bar{Y}_f)\subset \ell ^\infty (\Gamma ,\mathbb{Z})$ as in \eqref{eq:Wf} -- \eqref{eq:Yf-Zf}.

	\begin{lemma}
	\label{l:zero divisor}
The principal algebraic $\Gamma $-action $\lambda _f$ on $X_f$ has infinite topological entropy if and only if $f$ is a left zero divisor in $\mathbb{R}\Gamma $, i.e., if and only if there exists a nonzero $g\in \mathbb{R}\Gamma $ with $fg=0$.
	\end{lemma}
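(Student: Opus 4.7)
My plan is to link $h_\textup{top}(\lambda_f)$ to the F\o lner-averaged real dimensions of the coordinate projections $\pi_F \colon X_f \to \mathbb{T}^F$, and then to a kernel condition for $f$ inside the group von Neumann algebra $\mathcal{N}\Gamma$.

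By Pontryagin duality, $\widehat{\pi_F(X_f)} = \mathbb{Z}^F / (\mathbb{Z}^F \cap \mathbb{Z}\Gamma f)$, so $\pi_F(X_f)$ is a real torus of dimension $d_F := |F| - \dim_\mathbb{Q}(\mathbb{Q}^F \cap \mathbb{Q}\Gamma f)$ times a finite abelian group of bounded logarithmic density. A routine $\epsilon$-covering count, combined with the amenable-group definition of topological entropy, shows $h_\textup{top}(\lambda_f) = \infty$ precisely when $\limsup_F d_F/|F| > 0$ along a F\o lner sequence; otherwise the entropy is finite, controlled by the discrete factor alone.

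For the ``if'' direction, I first rescale a nonzero $g \in \mathbb{R}\Gamma$ with $fg = 0$ to obtain $g \in \mathbb{Z}\Gamma \setminus \{0\}$---the equation $fg = 0$ for $g$ supported in a fixed finite set is a $\mathbb{Z}$-linear system, so a nonzero real solution implies a nonzero integer one. The map $\rho^g$ is $\lambda$-equivariant, and $\rho^f\rho^g = \rho^{fg} = 0$ forces its image $Y := \rho^g(\mathbb{T}^\Gamma)$ to lie in $X_f$. The Pontryagin dual of $Y$ is the left submodule $\mathbb{Z}\Gamma g \subseteq \mathbb{Z}\Gamma$, and a direct duality computation gives $\dim_\mathbb{R} \pi_F(Y) = \dim_\mathbb{Q}(\mathbb{Q}^F g)$. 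The L\"uck F\o lner-approximation theorem for $\mathcal{N}\Gamma$-dimensions over countable amenable groups yields $\lim_F \dim_\mathbb{Q}(\mathbb{Q}^F g)/|F| = \dim_{\mathcal{N}\Gamma}(\mathbb{Z}\Gamma g) = 1 - \dim_{\mathcal{N}\Gamma}(\ker R_g)$, strictly positive because $g \neq 0$ forces right multiplication $R_g$ on $\ell^2\Gamma$ to have proper kernel. Thus $d_F/|F|$ stays bounded below along F\o lner sequences, so $h_\textup{top}(\lambda_f) \geq h_\textup{top}(\lambda|_Y) = \infty$.

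For the converse, suppose $f$ is not a left zero divisor in $\mathbb{R}\Gamma$. If $\dim_{\mathcal{N}\Gamma}(\ker L_f) > 0$, F\o lner approximation applied to $\ker L_f$ would produce, for large F\o lner $F$, a nonzero $g \in \mathbb{Q}^F$ with $fg = 0$, contradicting the hypothesis; so $\dim_{\mathcal{N}\Gamma}(\ker L_f) = 0$. The equal-trace identity $\tau(P_{\ker a}) = \tau(P_{\ker a^*})$ in the finite von Neumann algebra $\mathcal{N}\Gamma$ (applied to $a = L_f$) gives also $\dim_{\mathcal{N}\Gamma}(\ker R_f) = 0$, and a second application of F\o lner approximation to $\mathbb{Z}\Gamma f$ yields $\lim_F \dim_\mathbb{Q}(\mathbb{Q}^F \cap \mathbb{Q}\Gamma f)/|F| = 1$, so $d_F/|F| \to 0$ and $h_\textup{top}(\lambda_f)$ is finite. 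The main technical hurdle is invoking L\"uck's F\o lner-approximation and the finite-trace symmetry of left/right kernel dimensions in $\mathcal{N}\Gamma$; granted these, both directions reduce to Pontryagin-duality bookkeeping.
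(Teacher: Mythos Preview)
Your approach is genuinely different from the paper's. The paper only proves the ``if'' direction explicitly, via an elementary packing argument: given $g$ with $fg=0$, one places independent scalar multiples of $g^*$ on disjoint translates of $\textup{supp}(g^*)$ to embed a continuous cube into $X_f$, forcing infinite entropy. For the full equivalence the paper simply cites Chung--Li. Your route through von Neumann dimensions and F{\o}lner approximation is more structural and attempts both directions, but there are two real gaps.

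First, your reduction ``$\htop(\lambda_f)=\infty$ iff $\limsup d_F/|F|>0$'' is only half justified. The forward direction (positive torus-dimension density forces infinite entropy) is indeed a routine $\varepsilon$-count. But the reverse claim, that $d_F/|F|\to 0$ forces finite entropy ``controlled by the discrete factor alone,'' presupposes that the finite abelian factor $A_F$ of $\pi_F(X_f)$ satisfies $\log|A_F|=O(|F|)$. You assert this as ``bounded logarithmic density'' without proof, and it is not obvious: the torsion of $\mathbb{Z}^F/(\mathbb{Z}^F\cap\mathbb{Z}\Gamma f)$ is governed by determinants of large integer matrices, and bounding it is essentially where the Fuglede--Kadison determinant enters. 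Without this bound, $d_F/|F|\to 0$ does not by itself yield finite entropy.

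Second, the step ``if $\dim_{\mathcal{N}\Gamma}(\ker L_f)>0$, F{\o}lner approximation would produce a nonzero $g\in\mathbb{Q}^F$ with $fg=0$'' conflates dimension approximation with producing elements. L\"uck--Elek approximation gives $\dim_{\mathcal{N}\Gamma}(\ker L_f)$ as a limit of normalized dimensions of \emph{truncated} kernels, i.e.\ of $\{g\in\mathbb{C}^{F_n}:P_{F_n}(fg)=0\}$, not of $\{g\in\mathbb{C}^{F_n}:fg=0\}$. A correct argument runs in the opposite direction: if $f$ is not a left zero divisor then $g\mapsto fg$ is injective on each $\mathbb{C}^{F_n}$, so the truncated map $\mathbb{C}^{F_n}\to\mathbb{C}^{F_n}$ has rank at least $|F_n|-|\partial_{\textup{supp}(f)}F_n|$, and the rank form of Elek's theorem then gives $\dim_{\mathcal{N}\Gamma}(\ker L_f)=0$. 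You should phrase it this way rather than as extracting a finitely supported kernel element.
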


	\begin{proof}
This is a special case of \cite{Chung-Li}*{Theorem 4.11}. For later reference we include here an explicit proof of the fact that $\htop(\lambda _f)=\infty $ if $f$ is a left zero divisor in $\mathbb{R}\Gamma $.

We embed $\mathbb{R}\Gamma $ in $\ell ^\infty (\Gamma ,\mathbb{R})$ in the obvious manner by identifying each $h=\sum_{\gamma \in \Gamma }h_\gamma \cdot \gamma \in \mathbb{R}\Gamma $ with $(h_\gamma )_{\gamma \in \Gamma }\in \ell ^\infty (\Gamma ,\mathbb{R})$.

If $f \in \mathbb{Z}\Gamma $ is a left zero divisor in $\mathbb{R}\Gamma $ we choose $g = \sum_{\gamma \in \Gamma }g_\gamma \cdot \gamma \in\mathbb{R}\Gamma $ with $1_\Gamma \in \textup{supp}(g)$ and $fg=0$. Then $\bar{\rho }^fg^*= g^*f^* = 0$ (cf. \eqref{eq:convolutions}), and hence $\bar{\rho }^f(cg^*) =cg^*f^* = 0$ for every $c\in \mathbb{R}$. This shows that $cg^*\in W_f$ and $\eta (cg^*)\in X_f$ for every $c\in \mathbb{R}$ (cf. \eqref{eq:Wf}).

If $I\subset \mathbb{R}$ is the open interval $(-\frac{1}{2\|g\|_\infty }, \frac{1}{2\|g\|_\infty })$, then the elements $\eta (cg^*)\in X_f$, $0\ne c\in I$, are all distinct with identical supports $E=\textup{supp}(g^*)=\textup{supp}(g)^{-1}$.

Choose a maximal set $D\subset \Gamma $ such that the translates $\{\delta E\mid \delta \in D\}$ are disjoint. We claim that $DEE^{-1}= \bigcup_{\delta \in D}\delta EE^{-1} = \Gamma $. Indeed, if $DEE^{-1}\ne \Gamma $, then there exists a $\gamma \in \Gamma $ which is not equal to $\delta \gamma '\gamma ''^{-1}$ for any $\delta \in D$ and $\gamma ',\gamma ''\in E$. Then $\gamma E \cap \delta E = \varnothing $ for every $\delta \in D$, which contradicts the maximality of $D$. This proves the last claim.

Since the sets $\delta E,\,\delta \in D$, are disjoint, we obtain for every $z=(z_\delta )_{\delta \in D}\in I^D$, a point $\tilde{z}\in W_f \cap (-\frac12, \frac12)^\Gamma $ which coincides on each $\delta E$ with $z_\delta \bar{\lambda }^{\delta }g^*$. This shows that the restriction of $X_f$ to its coordinates in $DE$ contains --- in essence --- a Cartesian product of the form $I^D$. Since $(DE)E^{-1}=\Gamma $ and $E^{-1}$ is finite, $\lambda _f$ must have infinite topological entropy on $X_f$.
	\end{proof}

	\begin{proposition}
	\label{p:kernel}
If the principal algebraic $\Gamma $-action $\lambda _f$ on $X_f$ has finite topological entropy, then the restriction $\bar{\lambda }_C$ of $\bar{\lambda }$ to every weak$^*$-closed, bounded, $\bar{\lambda }$-invariant subset $C\subset K_f$ has topological entropy zero {\textup(}cf. \eqref{eq:Kf}{\textup)}.
	\end{proposition}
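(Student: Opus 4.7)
The approach is a dimension count against the Følner property: the constraint $\bar{\rho}^fw=0$ forces any restriction of $w\in C$ to a large Følner window to lie in a linear subspace whose codimension is comparable to the window's size. Fix $R>0$ with $\|w\|_\infty \le R$ for every $w\in C$, and a two-sided Følner sequence $(F_n)$ in $\Gamma$; equip bounded subsets of $\ell^\infty(\Gamma,\mathbb{R})$ with the weak$^*$ metric (equivalent on bounded sets to the product metric), and express $\htop(\bar{\lambda}_C)$ as $\lim_{\epsilon\to 0^+}\limsup_n|F_n|^{-1}\log N_\epsilon(C|_{F_n})$, where $N_\epsilon$ denotes the $\epsilon$-covering number of the restriction in the $\ell^\infty$-metric on $\mathbb{R}^{F_n}$.

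The essential algebraic input is a two-sided version of Lemma~\ref{l:zero divisor}. From $\htop(\lambda_f)<\infty$ we obtain that $f$ is not a left zero divisor in $\mathbb{R}\Gamma$; moreover, since $R_{f^*}=R_f^*$ on $\ell^2\Gamma$, the Fuglede--Kadison determinant formula for entropy of principal algebraic actions yields $\htop(\lambda_{f^*})=\htop(\lambda_f)<\infty$, and applying Lemma~\ref{l:zero divisor} to $f^*$ together with the involution on $\mathbb{R}\Gamma$ gives that $f$ is also not a \emph{right} zero divisor: any $c\in \mathbb{R}\Gamma$ with $cf=0$ must vanish.

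Put $S=\textup{supp}(f)$ and $F_n^\circ=\{\gamma\in F_n:\gamma S\subseteq F_n\}$. For each $\gamma\in F_n^\circ$, the linear functional $L_\gamma(w):=\sum_{\delta\in S}f_\delta w_{\gamma\delta}$ depends only on $w|_{F_n}$ and vanishes on $C$, so $C|_{F_n}\subseteq V_n\cap[-R,R]^{F_n}$ where $V_n:=\bigcap_{\gamma\in F_n^\circ}\ker L_\gamma\subseteq \mathbb{R}^{F_n}$. A dependence $\sum_{\gamma\in F_n^\circ}c_\gamma L_\gamma=0$ in $(\mathbb{R}^{F_n})^*$ unfolds to the vanishing of $cf\in \mathbb{R}\Gamma$ at every $\epsilon\in F_n$, for $c=\sum_{\gamma}c_\gamma\cdot\gamma$; since $\textup{supp}(cf)\subseteq F_n^\circ\cdot S\subseteq F_n$, this forces $cf=0$ globally, and the previous paragraph gives $c=0$. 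Consequently the $L_\gamma$, $\gamma\in F_n^\circ$, are linearly independent, and $\dim V_n=|F_n|-|F_n^\circ|=|F_n\setminus F_n^\circ|$.

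A bounded subset of $\ell^\infty$-diameter $\le 2R$ inside a $d$-dimensional linear subspace of $\mathbb{R}^{F_n}$ has $\epsilon$-covering number at most $(C_0R/\epsilon)^d$ for a universal constant $C_0$ (a standard volumetric bound, applied after choosing a coordinate subset of size $d$ on which $V_n$ projects isomorphically). Hence $\log N_\epsilon(C|_{F_n})\le |F_n\setminus F_n^\circ|\cdot\log(C_0R/\epsilon)$. Because $F_n\setminus F_n^\circ\subseteq \bigcup_{\delta\in S}(F_n\setminus F_n\delta^{-1})$, the Følner property yields $|F_n\setminus F_n^\circ|/|F_n|\to 0$, so the bound is $o(|F_n|)$ for every fixed $\epsilon>0$. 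Taking $\limsup_n$ and then $\epsilon\to 0$ gives $\htop(\bar{\lambda}_C)=0$. The main obstacle is the two-sided zero-divisor step, which uses amenability through the symmetry $\htop(\lambda_f)=\htop(\lambda_{f^*})$; the dimension-to-covering bound is standard but deserves careful statement because the subspace $V_n$ need not be coordinate-aligned with the $\ell^\infty$-metric on $\mathbb{R}^{F_n}$.
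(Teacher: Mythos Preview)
Your argument is correct and takes a genuinely different route from the paper. The paper proves a preliminary lemma (Lemma~\ref{L-small entropy}) saying that for any algebraic $\Gamma$-action with finite entropy, closed invariant sets contained in sufficiently small neighbourhoods of the identity have arbitrarily small entropy; it then observes that the actions $\bar\lambda_{B_r(K_f)}$ are conjugate for all $r>0$, so their common entropy is bounded by the entropy of $\eta(B_{r'}(K_f))\subset X_f$ for arbitrarily small $r'$, and hence by any $\varepsilon>0$. Your approach instead performs an explicit dimension count on the restriction $C|_{F_n}$, showing it sits in a subspace of dimension $|F_n\smallsetminus F_n^\circ|=o(|F_n|)$, and then uses a volumetric covering bound. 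The paper's method is more dynamical and exploits the scaling symmetry of $K_f$; yours is closer in spirit to the later proof of Theorem~\ref{t:comparison2} (compare Lemma~\ref{l:dimension}), and has the virtue of giving an explicit rate.

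Two minor remarks on your write-up. First, the covering bound $(C_0R/\epsilon)^d$ for a set of $\ell^\infty$-diameter $\le 2R$ in a $d$-dimensional subspace is correct, but not by the coordinate-projection argument you sketch: the inverse of $\pi_J$ need not be uniformly bounded in $\ell^\infty$, so preimages of small $\ell^\infty$-cubes in $\mathbb{R}^J$ can have large $\ell^\infty$-diameter in $\mathbb{R}^{F_n}$. The clean justification is the standard packing argument in the $d$-dimensional normed space $(V_n,\|\cdot\|_\infty)$, which gives covering number $\le(1+2R/\epsilon)^d$ with a genuinely universal constant. Second, your detour through the Fuglede--Kadison determinant formula to obtain that $f$ is not a \emph{right} zero divisor is correct but heavier than needed: one can instead bound $\dim V_n$ directly as in Lemma~\ref{l:dimension}, using only that $v\mapsto v\cdot f^*$ is injective on $\mathbb{R}\Gamma$ (which follows from $f$ not being a \emph{left} zero divisor, since $vf^*=0$ gives $fv^*=0$), yielding $\dim V_n\le |F_nS^{-1}\smallsetminus F_n^\circ|=o(|F_n|)$.
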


For the proof of Proposition \ref{p:kernel} we need a lemma.

	\begin{lemma}
	\label{L-small entropy}
Let $\tau \colon \Gamma \to \textup{Aut}(X)$ be an action of a countably infinite discrete amenable group $\Gamma $ by continuous automorphisms of a compact metrizable group $X$ such that $\htop(\tau )<\infty$. Then there exists, for every $\varepsilon >0$, a neighbourhood $U$ of $1_X$ in $X$ such that the topological entropy $\htop(\tau _C)$ of the restriction of $\tau $ to any closed $\tau $-invariant subset $C\subset U$ is less than $\varepsilon $.
	\end{lemma}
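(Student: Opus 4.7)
Fix $\varepsilon > 0$ and a compatible bi-invariant metric $d$ on $X$ (which exists because $X$ is a compact metrizable group). Set $B_\delta = \{x \in X : d(x, 1_X) < \delta\}$.

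\textbf{Reduction to a canonical trap.} For each $\delta > 0$ define
\[
\Omega_\delta := \bigcap_{\gamma \in \Gamma} \tau^{-\gamma}\bigl(\overline{B_\delta}\bigr),
\]
a closed $\tau$-invariant subset of $X$ containing every closed $\tau$-invariant $C \subset B_\delta$; hence $\htop(\tau_C) \le \htop(\tau_{\Omega_\delta})$. Taking $\gamma = 1_\Gamma$ shows $\Omega_\delta \subset \overline{B_\delta}$, so $\bigcap_{\delta > 0} \Omega_\delta = \{1_X\}$ and the family $(\Omega_\delta)$ is a decreasing nest of closed $\tau$-invariant sets shrinking to $\{1_X\}$. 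Putting $U = B_\delta$, it suffices to find $\delta > 0$ with $\htop(\tau_{\Omega_\delta}) < \varepsilon$.

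\textbf{Homogeneity via bi-invariance.} The bi-invariance of $d$ together with the fact that each $\tau^\gamma$ is a group automorphism make the dynamical metric $d_F(a, b) := \max_{\gamma \in F} d(\tau^\gamma a, \tau^\gamma b)$ translation-invariant, so each Bowen stable set $\Gamma_\delta(x) := \{y \in X : \sup_{\gamma \in \Gamma} d(\tau^\gamma x, \tau^\gamma y) \le \delta\}$ equals the translate $x \cdot \Omega_\delta$. Consequently the tail entropy of $\tau$ at scale $\delta$, computed via Bowen-separated numbers in the sets $\Gamma_\delta(x)$, is independent of $x$ and equal to $\htop(\tau_{\Omega_\delta})$. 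The problem thus reduces to asymptotic $h$-expansiveness of $\tau$, namely $\htop(\tau_{\Omega_\delta}) \to 0$ as $\delta \to 0$.

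\textbf{Concluding via finite entropy, and main obstacle.} Suppose for contradiction that $\htop(\tau_{\Omega_{\delta_n}}) \ge \varepsilon$ along a sequence $\delta_n \searrow 0$. By the variational principle for amenable group actions, pick $\tau$-invariant Borel probability measures $\mu_n$ supported on $\Omega_{\delta_n}$ with $h_{\mu_n}(\tau) \ge \varepsilon/2$. Since $\Omega_{\delta_n} \subset \overline{B_{\delta_n}}$, the $\mu_n$ converge weak-$*$ to $\delta_{1_X}$, and upper-semicontinuity of $\mu \mapsto h_\mu(\tau)$ at $\delta_{1_X}$ forces $\varepsilon/2 \le \limsup_n h_{\mu_n}(\tau) \le h_{\delta_{1_X}}(\tau) = 0$, a contradiction. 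The principal technical point is precisely this upper-semicontinuity (equivalently, the asymptotic $h$-expansiveness of $\tau$) for an amenable $\Gamma$-action by continuous automorphisms on a compact metrizable group with finite topological entropy; in the $\mathbb{Z}$-case it is classical (Bowen, Misiurewicz), and the extension to amenable $\Gamma$ proceeds via Ornstein--Weiss tiling arguments adapting Bowen's local defect inequality.
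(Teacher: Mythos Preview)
Your reduction to the sets $\Omega_\delta$ is fine, and the observation that the tail entropy is homogeneous under translation is correct. The problem is the final paragraph: you reduce the lemma to upper-semicontinuity of the entropy map $\mu \mapsto h_\mu(\tau)$ at $\delta_{1_X}$, and then simply assert that this holds, citing ``asymptotic $h$-expansiveness'' and a vague extension of Bowen--Misiurewicz via Ornstein--Weiss tilings. But asymptotic $h$-expansiveness \emph{is} the statement $\htop(\tau_{\Omega_\delta})\to 0$, which is exactly what you set out to prove; and for amenable $\Gamma$-actions, upper-semicontinuity of entropy is typically \emph{derived from} asymptotic $h$-expansiveness, not the other way around. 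Finite topological entropy alone does not give upper-semicontinuity (already for $\mathbb{Z}$-actions there are finite-entropy counterexamples), so some use of the group-automorphism structure of $\tau$ is required precisely at the point you leave open. As written, the argument is circular.

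The paper's proof supplies exactly the missing direct argument and never mentions tail entropy or semicontinuity. It exploits the group structure in an elementary way: with a left-invariant metric $\mathsf{d}$, one picks a scale $\zeta>0$ realising $\htop(\tau)$ to within $\varepsilon$ via $(\mathsf{d}_{F_n},\zeta)$-separated sets $X_n\subset X$, sets $U=\{x:\mathsf{d}(x,1_X)<\zeta/10\}$, and for any closed invariant $Y\subset U$ takes maximal $(\mathsf{d}_{F_n},\delta)$-separated sets $Y_n\subset Y$ with $\delta<\zeta/10$. Left-invariance of $\mathsf{d}$ and the smallness of $Y$ make the product set $X_nY_n$ again $(\mathsf{d}_{F_n},\delta)$-separated with $|X_nY_n|=|X_n|\cdot|Y_n|$, so
\[
\htop(\tau)\ \ge\ \limsup_n \tfrac{1}{|F_n|}\log(|X_n|\cdot|Y_n|)\ \ge\ (\htop(\tau)-\varepsilon)+\limsup_n \tfrac{1}{|F_n|}\log|Y_n|,
\]
giving $\htop(\tau_Y)\le\varepsilon$ directly. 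This is the substantive use of the automorphism hypothesis that your argument needed.
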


	\begin{proof}
Choose a compatible left translation invariant metric $\mathsf{d}$ on $X$ (i.e., $\mathsf{d}(x,y) = \mathsf{d}(zx,zy)$ for all $x,y,z\in X$). For every nonempty finite subset $F \Subset \Gamma $, put
	\begin{displaymath}
\mathsf{d}_F(x,y)=\max\nolimits_{\gamma \in F}\mathsf{d}(\tau ^\gamma x,\tau ^\gamma y), \enspace x,y\in X.
	\end{displaymath}
For each $\zeta >0$ we denote by $\textup{sep}(X, \mathsf{d}, \zeta )$ the maximal cardinality of subsets $Z\subset X$ which are $(\mathsf{d}, \zeta )$-separated in the sense that $\mathsf{d}(y, z)\ge \zeta $ for all distinct $y, z\in Z$.

\smallskip Take a left F{\o}lner sequence $(F_n)_{n\ge1}$ for $\Gamma $, i.e., a sequence of nonempty finite sets $F_n\subset \Gamma $ with $\lim_{n\to\infty }\frac{|\gamma F_n \cap F_n|}{|F_n|}=0$ for every $\gamma \in \Gamma $. Then there exists, for every $\varepsilon >0$, some $\zeta >0$ such that
	\begin{displaymath}
\liminf_{n\to \infty} \frac{1}{|F_n|}\log \textup{sep}(X, \mathsf{d}_{F_n}, \zeta )\ge \htop(\tau )-\varepsilon/2.
	\end{displaymath}
For large enough $n$, take a $(\mathsf{d}_{F_n}, \zeta )$-separated set $X_n\subset X$ such that $\frac{1}{|F_n|}\log |X_n|\ge \htop(\tau )-\varepsilon$.

\smallskip Put $U = \{x\in X\mid \mathsf{d}(x, 1_X)<\zeta /10\}$. Let $Y\subset U$ be closed and $\Gamma $-invariant, and let $\tau _Y$ be the restriction of $\tau $ to $Y$. In order to show that $\htop(\tau _Y)\le \varepsilon$ it suffices to show that
	\begin{align}
	\label{E-entropy small}
\limsup_{n\to \infty} \frac{1}{|F_n|}\log \textup{sep}(Y, \mathsf{d}_{F_n}, \delta)\le \varepsilon
	\end{align}
whenever $0<\delta<\zeta /10$. In order to verify \eqref{E-entropy small} we choose, for each $n$, a $(\mathsf{d}_{F_n}, \delta)$-separated set $Y_n\subset Y$ of cardinality $|Y_n|=\textup{sep}(Y, \mathsf{d}_{F_n}, \delta)$. When $n$ is large enough, then $|X_nY_n|=|X_n|\cdot |Y_n|$ and $X_nY_n$ is $(\mathsf{d}_{F_n}, \delta)$-separated: indeed,
	\begin{displaymath}
\mathsf{d}_{F_n}(xy, xz)=\mathsf{d}_{F_n}(y, z)\ge \delta,
	\end{displaymath}
for $x\in X_n$ and distinct $y, z\in Y_n$, whereas
	\begin{align*}
\mathsf{d}_{F_n}(x_1y, x_2z)&\ge \mathsf{d}_{F_n}(x_1, x_2)-\mathsf{d}_{F_n}(x_1y, x_1)-\mathsf{d}_{F_n}(x_2z, x_2)
	\\
&=\mathsf{d}_{F_n}(x_1, x_2)-\mathsf{d}_{F_n}(y, 1_X)-\mathsf{d}_{F_n}(z, 1_X) \ge \zeta -\zeta /10-\zeta /10\ge \delta
	\end{align*}
for $y, z\in Y_n$ and distinct $x_1, x_2\in X_n$.

\smallskip Then
	\begin{align*}
\htop(\tau )&\ge \limsup_{n\to \infty}\tfrac{1}{|F_n|}\log \textup{sep}(X, \mathsf{d}_{F_n}, \delta) \ge \limsup_{n\to \infty}\tfrac{1}{|F_n|}\log (|X_n|\cdot |Y_n|)
	\\
&\ge \htop(\tau )-\varepsilon+\limsup_{n\to \infty}\tfrac{1}{|F_n|}\log \textup{sep}(Y, \mathsf{d}_{F_n}, \delta),
	\end{align*}
which implies \eqref{E-entropy small}.
	\end{proof}

	\begin{proof}[Proof of Proposition \ref{p:kernel}] Since the $\Gamma $-actions $\bar{\lambda }_{B_r(K_f)},\, r>0$, are all conjugate to each other, $\htop(\bar{\lambda }_{B_r(K_f)})$ is the same for all $r>0$.

\smallskip For $0<r<1/2$, the map $\eta \colon \ell^\infty(\Gamma, \mathbb{R})\rightarrow \mathbb{T}^\Gamma $ in \eqref{eq:eta} embeds $B_r(K_f)$ injectively as a closed $\Gamma $-invariant subset of $X_f$. If $U\subset X_f$ is any open neighbourhood of $1_{X_f}$, then $\eta (B_r(K_f))\subset U$ for all sufficiently small $r>0$.

Let $C\subset K_f$ be a weak$^*$-closed, bounded, $\bar{\lambda }$-invariant subset. Then $C\subset B_r(K_f)$ for some $r>0$, and $\htop(\bar{\lambda }_C)\le \htop(\bar{\lambda }_{B_r(K_f)}) = \htop(\bar{\lambda }_{B_{r'}(K_f)})$ for every $r'>0$.

Let $\varepsilon>0$. By Lemma~\ref{L-small entropy} there is some neighbourhood $U$ of $1_{X_f}$ in $X_f$ such that for any closed $\Gamma$-invariant subset $Y$ of $X_f$ contained in $U$ the restriction $(\lambda _f)_Y$ of $\lambda _f$ to $Y$ has entropy $\le \varepsilon $. If $r'>0$ is small enough, $\eta(B_{r'}(K_f))\subset U$, so that
	\begin{displaymath}
\htop(\bar{\lambda }_C)\le \htop(\bar{\lambda }_{B_{r}(K_f)}) = \htop(\bar{\lambda }_{B_{r'}(K_f)}) = \htop((\lambda _f)_{\eta (B_{r'}(K_f))}) \le \varepsilon .
	\end{displaymath}
As $\varepsilon>0$ is arbitrary, we conclude that $\htop(\bar{\lambda }_C)=0$.
	\end{proof}

	\begin{proposition}[cf. \cite{Leiden}*{Proposition 8.7}]
	\label{p:product}
Let $Y_1,Y_2$ be compact metrizable spaces, and let $\tau _1,\tau _2$ be continuous actions of a countably infinite discrete amenable group $\Gamma $ on $Y_1$ and $Y_2$ such that the topological entropy $\htop(\tau _2)$ of $\tau _2$ is equal to zero. We write $\pi ^{(i)}\colon Y_1\times Y_2\rightarrow Y_i$ for the two coordinate projections. If $\mu $ is a $(\tau _1\negthinspace \times \negthinspace \tau _2)$-invariant Borel probability measure on $Y_1\times Y_2$ we set $\mu _i=\pi ^{(i)}_*\mu $. Then $h_\mu (\tau _1\negthinspace \times \negthinspace \tau _2)=h_{\mu _1}(\tau _1)$.
	\end{proposition}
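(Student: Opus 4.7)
My plan is to prove the two inequalities $h_\mu(\tau_1\times \tau_2) \ge h_{\mu_1}(\tau_1)$ and $h_\mu(\tau_1\times \tau_2) \le h_{\mu_1}(\tau_1)$ separately. The lower bound is immediate: $\pi^{(1)}$ is a measure-theoretic factor map from $(Y_1\times Y_2, \mu, \tau_1\times \tau_2)$ onto $(Y_1, \mu_1, \tau_1)$, so monotonicity of measure-theoretic entropy under factors for countable amenable group actions gives $h_\mu(\tau_1\times \tau_2) \ge h_{\mu_1}(\tau_1)$.

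For the upper bound I first restrict to ``rectangular'' partitions. Given finite Borel partitions $\alpha_i$ of $Y_i$ for $i=1,2$, set $\tilde{\alpha}_i = (\pi^{(i)})^{-1}\alpha_i$. Combining the standard subadditivity $h_\nu(\sigma, \beta\vee\gamma) \le h_\nu(\sigma,\beta) + h_\nu(\sigma,\gamma)$ along a F{\o}lner sequence with the identity $h_\mu(\tau_1\times \tau_2, \tilde{\alpha}_i) = h_{\mu_i}(\tau_i, \alpha_i)$ (clear since $\tilde{\alpha}_i$ is pulled back through the factor map $\pi^{(i)}$), I obtain
\[
h_\mu(\tau_1\times \tau_2, \tilde{\alpha}_1\vee\tilde{\alpha}_2) \le h_{\mu_1}(\tau_1,\alpha_1) + h_{\mu_2}(\tau_2,\alpha_2) \le h_{\mu_1}(\tau_1) + h_{\mu_2}(\tau_2).
\]
By the variational principle for measure-preserving actions of countable amenable groups, $h_{\mu_2}(\tau_2) \le \htop(\tau_2) = 0$, so the right-hand side collapses to $h_{\mu_1}(\tau_1)$.

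To extend from rectangular to arbitrary finite Borel partitions $\alpha$ of $Y_1\times Y_2$, I use that finite unions of product cylinder sets generate the Borel $\sigma$-algebra of $Y_1\times Y_2$, together with the regularity of $\mu$. Given $\varepsilon>0$, I can therefore choose $\alpha_i$ with $H_\mu(\alpha\mid\tilde{\alpha}_1\vee\tilde{\alpha}_2) < \varepsilon$. The standard estimate $h_\nu(\sigma,\alpha) \le h_\nu(\sigma,\beta) + H_\nu(\alpha\mid\beta)$, valid for any measure-preserving amenable group action $\sigma$, then gives
\[
h_\mu(\tau_1\times \tau_2, \alpha) \le h_\mu(\tau_1\times \tau_2, \tilde{\alpha}_1\vee\tilde{\alpha}_2) + \varepsilon \le h_{\mu_1}(\tau_1) + \varepsilon.
\]
Letting $\varepsilon \downarrow 0$ and taking the supremum over $\alpha$ completes the upper bound.

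The main obstacle is the invocation of the variational principle for countable amenable group actions; this is where the hypothesis $\htop(\tau_2)=0$ actually enters, and it is the one genuinely nontrivial input from outside standard entropy calculus. The remaining manipulations are standard and translate directly from the $\mathbb{Z}$-action case once the F{\o}lner-sequence definition of $h_\mu$ (and the associated subadditivity and conditional-entropy estimates) is in place.
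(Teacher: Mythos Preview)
Your proof is correct and follows essentially the same route as the paper's: both split into the trivial factor-map lower bound and an upper bound obtained by bounding $h_\mu(\tau_1\times\tau_2,\tilde{\alpha}_1\vee\tilde{\alpha}_2)$ by $h_{\mu_1}(\tau_1,\alpha_1)+h_{\mu_2}(\tau_2,\alpha_2)$ and then invoking the variational principle to kill the second term. The only difference is that the paper simply asserts ``by taking the supremum over all finite partitions $\mathcal{P}$ and $\mathcal{Q}$'' to pass from rectangular partitions to the full entropy, whereas you spell out the approximation argument via $H_\mu(\alpha\mid\tilde{\alpha}_1\vee\tilde{\alpha}_2)<\varepsilon$; your version is the more careful one on this point.
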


	\begin{proof}
Let $\mathcal{P}$ and $\mathcal{Q}$ be finite Borel partitions of $Y_1$ and $Y_2$, respectively, and set $\tilde{\mathcal{P}}=\{P\times Y_2\mid P\in \mathcal{P}\}$ and $\tilde{\mathcal{Q}}=\{Y_1\times Q\mid Q\in \mathcal{Q}\}$. If $(F_n)$ is a left F{\o}lner sequence in $\Gamma $, then
	\begin{align*}
h_\mu (\tau _1\negthinspace \times \negthinspace \tau _2,\tilde{\mathcal{P}}\vee \tilde{\mathcal{Q}}) & = \lim_{n\to\infty } \frac{1}{|F_n|}H_\mu \bigl(\textstyle\bigvee _{\gamma \in F_n}(\tau _1\negthinspace \times \negthinspace \tau _2)^{\gamma ^{-1}}(\tilde{\mathcal{P}}\vee \tilde{\mathcal{Q}})\bigr)
	\\
&= \lim_{n\to\infty } \frac{1}{|F_n|}H_\mu \bigl(\textstyle\bigvee _{\gamma \in F_n}\tau _1^{\gamma ^{-1}}(\tilde{\mathcal{P}})\vee \textstyle\bigvee _{\gamma \in F_n}\tau _2^{\gamma ^{-1}}(\tilde{\mathcal{Q}})\bigr)
	\\
&= \lim_{n\to\infty } \frac{1}{|F_n|}\Bigl(H_\mu \bigl(\textstyle\bigvee _{\gamma \in F_n}\tau _1^{\gamma ^{-1}}(\tilde{\mathcal{P}})\bigr)
	\\
&\qquad \qquad \qquad + H_\mu \bigl(\textstyle\bigvee _{\gamma \in F_n}\tau _2^{\gamma ^{-1}}(\tilde{\mathcal{Q}}) \bigm| \textstyle\bigvee _{\gamma \in F_n}\tau _1^{\gamma ^{-1}}(\tilde{\mathcal{P}})\bigr)\Bigr)
	\\
&\le \lim_{n\to\infty }\frac{1}{|F_n|} \Bigl(H_\mu \bigl(\textstyle \bigvee_{\gamma \in F_n} \tau _1^{\gamma ^{-1}}(\tilde{\mathcal{P}})\bigr) + H_\mu \bigl(\textstyle\bigvee _{\gamma \in F_n}\tau _2^{\gamma ^{-1}}(\tilde{\mathcal{Q}})\bigr)\Bigr)
	\\
&=\lim_{n\to\infty }\frac{1}{|F_n|} \Bigl(H_{\mu_1} \bigl(\textstyle \bigvee_{\gamma \in F_n} \tau _1^{\gamma ^{-1}}(\mathcal{P})\bigr) + H_{\mu _2}\bigl(\textstyle\bigvee _{\gamma \in F_n}\tau _2^{\gamma ^{-1}}(\mathcal{Q})\bigr)\Bigr)
	\\
&\le h_{\mu _1}(\tau _1,\mathcal{P}) + \htop(\tau _2) = h_{\mu _1}(\tau _1,\mathcal{P})
	\end{align*}
by the variational principle \cite{KL16}*{Theorem 9.48}. By taking the supremum over all finite partitions $\mathcal{P}$ and $\mathcal{Q}$ we obtain that $h_\mu (\tau _1\negthinspace \times \negthinspace \tau _2)\linebreak[0]\le h_{\mu _1}(\tau _1)$. The reverse inequality $h_{\mu _1}(\tau _1)\le h_\mu (\tau _1\negthinspace \times \negthinspace \tau _2)$ is obvious.
	\end{proof}

	\begin{proposition}[cf. \cite{Leiden}*{Corollary 8.9}]
	\label{p:no-drop}
Suppose that the principal algebraic $\Gamma $-action $\lambda _f$ on $X_f$ has finite topological entropy. Then the following is true.
	\begin{enumerate}
	\item
For every $\bar{\lambda }$-invariant Borel probability measure $\nu $ on $\bar{Y}_f$, the probability measure $\nu ^\hash = \bar{\rho }^f_*\nu $ on $\bar{Z}_f$ is $\bar{\lambda }$-invariant, and $h_\nu (\bar{\lambda }_{\bar{Y}_f})=h_{\nu ^\hash}(\bar{\lambda }_{\bar{Z}_f})$.
	\item
$\htop(\bar{\lambda }_{\bar{Y}_f})=\htop(\bar{\lambda }_{\bar{Z}_f})$.
	\end{enumerate}
	\end{proposition}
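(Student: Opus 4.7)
The plan is to exploit the machinery set up in the preceding three propositions: part~(1) will follow by combining the measurable conjugacy of Proposition~\ref{p:conjugacy} with Proposition~\ref{p:kernel} fed into Proposition~\ref{p:product}, and part~(2) will then drop out from part~(1) via the variational principle together with the fact that topological entropy is monotone under factor maps.

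For part~(1), the first step is to check that $\bar{\rho}^f\colon \bar{Y}_f\to \bar{Z}_f$ is continuous, surjective, and $\Gamma$-equivariant: associativity of convolution gives $\bar{\rho}^f(\bar{\lambda}^\gamma w)=(\gamma\cdot w)\cdot f^*=\gamma\cdot(w\cdot f^*)=\bar{\lambda}^\gamma(\bar{\rho}^f w)$, so $\nu^\hash=\bar{\rho}^f_*\nu$ is automatically $\bar{\lambda}$-invariant. Next I will invoke Proposition~\ref{p:conjugacy} to obtain a $\tilde{\lambda}$-invariant Borel probability measure $\tilde{\nu}$ on $\tilde{Z}_f=\bar{Z}_f\times K_f$, concentrated on the weak$^*$-compact, $\bar{\lambda}$-invariant set $\bar{Z}_f\times B_2(K_f)$, satisfying $\pi^{(1)}_*\tilde{\nu}=\nu^\hash$ and making $(\bar{Y}_f,\nu,\bar{\lambda})$ measurably conjugate to $(\tilde{Z}_f,\tilde{\nu},\tilde{\lambda})$. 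Since measurable conjugacy preserves measure-theoretic entropy, this yields $h_\nu(\bar{\lambda}_{\bar{Y}_f})=h_{\tilde{\nu}}(\tilde{\lambda})$.

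Restricted to the invariant support $\bar{Z}_f\times B_2(K_f)$, the action $\tilde{\lambda}$ is literally the product $\bar{\lambda}_{\bar{Z}_f}\times \bar{\lambda}_{B_2(K_f)}$. By Proposition~\ref{p:kernel}, the standing hypothesis $\htop(\lambda_f)<\infty$ forces $\htop(\bar{\lambda}_{B_2(K_f)})=0$. Proposition~\ref{p:product}, applied with $Y_1=\bar{Z}_f$, $Y_2=B_2(K_f)$, $\mu=\tilde{\nu}$, $\mu_1=\nu^\hash$, then delivers $h_{\tilde{\nu}}(\tilde{\lambda})=h_{\nu^\hash}(\bar{\lambda}_{\bar{Z}_f})$, which chained with the previous equality proves~(1).

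For part~(2), the variational principle \cite{KL16}*{Theorem 9.48} combined with (1) gives
	\begin{displaymath}
\htop(\bar{\lambda}_{\bar{Y}_f})=\sup\nolimits_\nu h_\nu(\bar{\lambda}_{\bar{Y}_f})=\sup\nolimits_\nu h_{\nu^\hash}(\bar{\lambda}_{\bar{Z}_f})\le \htop(\bar{\lambda}_{\bar{Z}_f}),
	\end{displaymath}
while the reverse inequality is immediate since $\bar{\rho}^f\colon \bar{Y}_f\to \bar{Z}_f$ is a continuous $\Gamma$-equivariant surjection between compact metrizable spaces. The only subtle point in the whole argument is conceptual rather than technical: one must remember that Proposition~\ref{p:conjugacy} supplies a measure invariant under the \emph{genuine} product action $\tilde{\lambda}$, not under the cocycle-twisted action $\tilde{\lambda}_1$, because it is precisely this that lets Proposition~\ref{p:product} be applied with a zero-entropy second factor. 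That cohomological correction has already been carried out in the construction of $\theta_2$ upstream, so here it only remains to harvest its consequences, and I anticipate no further obstacle.
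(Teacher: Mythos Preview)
Your proof is correct and follows essentially the same route as the paper's: both obtain $h_\nu(\bar\lambda_{\bar Y_f})=h_{\tilde\nu}(\tilde\lambda)=h_{\nu^\hash}(\bar\lambda_{\bar Z_f})$ by feeding the measure $\tilde\nu$ from Proposition~\ref{p:conjugacy} into Proposition~\ref{p:product} with the zero-entropy second factor supplied by Proposition~\ref{p:kernel}, and then deduce~(2) from~(1) via the variational principle and monotonicity under the factor map $\bar\rho^f$. Your version is slightly more explicit in naming the role of Proposition~\ref{p:conjugacy} and in verifying the $\bar\lambda$-invariance of $\nu^\hash$, but the argument is the same.
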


	\begin{proof}
Since $\bar{\rho }^f$ induces a continuous surjective $\bar{\lambda }$-equivariant map from $\bar{Y}_f$ to $\bar{Z}_f$, $\htop(\bar{\lambda }_{\bar{Z}_f})\le \htop(\bar{\lambda }_{\bar{Y}_f})$ and $h_{\nu ^\hash}(\bar{\lambda }_{\bar{Z}_f}) \le h_\nu (\bar{\lambda }_{\bar{Y}_f})$ for every $\bar{\lambda }$-invariant Borel probability measure $\nu $ on $\bar{Y}_f$.

By applying the Propositions \ref{p:kernel} and \ref{p:product} with $Y_1=\bar{Z}_f$, $Y_2=B_2(K_f)$, $\tau _1 = \bar{\lambda }_{\bar{Z}_f}$, $\tau _2= \bar{\lambda }_{B_2(K_f)}$, $\mu = \tilde{\nu }$, and $\mu _1=\nu ^\hash = \bar{\rho }^f_*\nu $, we obtain that $h_\nu (\bar\lambda _{\bar{Y}_f}) = h_{\tilde{\nu }}(\tilde{\lambda })= h_{\nu ^\hash}(\bar\lambda _{\bar{Z}_f})$. This proves (1).

\smallskip (2): The variational principle \cite{KL16}*{Theorem 9.48} implies that
	\begin{displaymath}
\htop(\bar{\lambda }_{\bar{Y}_f})=\sup_\mu h_\mu (\bar{\lambda }_{\bar{Y}_f})=\sup_\mu h_{\bar{\rho }^f_*\mu }(\bar{\lambda }_{\bar{Z}_f})\le \htop(\bar{\lambda }_{\bar{Z}_f}),
	\end{displaymath}
where the supremum is taken over the set of $\bar{\lambda }$-invariant Borel probability measures $\mu $ on $\bar{Y}_f$. Since the opposite inequality $\htop(\bar{\lambda }_{\bar{Z}_f})\le \htop(\bar{\lambda }_{\bar{Y}_f})$ is trivially satisfied, this completes the proof of the proposition.
	\end{proof}

Proposition \ref{p:kernel} yields a strengthening of Proposition \ref{p:no-drop} for $\bar{\lambda }$-invariant probability measures $\nu $ on $\bar{Y}_f$ with completely positive entropy. We use the same notation as in the Propositions \ref{p:conjugacy} and \ref{p:no-drop}.

	\begin{corollary}
	\label{c:conjugacy}
Suppose that the principal algebraic $\Gamma $-action $\lambda _f$ on $X_f$ has finite topological entropy. Then the $\Gamma $-actions $\bar{\lambda }_{\bar{Y}_f}$ on $(\bar{Y}_f,\nu )$ and $\bar{\lambda }_{\bar{Z}_f}$ on $(\bar{Z}_f, \nu ^\hash)$ are measurably conjugate for every $\bar{\lambda }$-invariant Borel probability measure $\nu $ on $\bar{Y}_f$ with completely positive entropy.
	\end{corollary}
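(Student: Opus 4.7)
The plan is to combine Proposition \ref{p:conjugacy} with the zero-entropy input provided by Proposition \ref{p:kernel}, and then exploit the completely positive entropy hypothesis to show that the auxiliary $K_f$-coordinate of the intermediate space $\tilde{Z}_f$ collapses modulo $\tilde{\nu}$. Once that is established, the first coordinate projection $\pi^{(1)}$ becomes a measurable isomorphism, and composing with the conjugacy $\theta$ from Proposition \ref{p:conjugacy} yields the required measurable conjugacy of $(\bar{Y}_f,\nu)$ with $(\bar{Z}_f,\nu^\hash)$ — realized, by construction, by $\bar{\rho}^f$ itself.

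Concretely, I would first invoke Proposition \ref{p:conjugacy} to obtain a $\tilde{\lambda}$-invariant Borel probability measure $\tilde{\nu}$ on $\tilde{Z}_f=\bar{Z}_f\times K_f$ supported in $\bar{Z}_f\times B_2(K_f)$, with $\pi^{(1)}_*\tilde{\nu}=\nu^\hash$, and such that $\theta\colon (\bar{Y}_f,\nu)\to (\tilde{Z}_f,\tilde{\nu})$ is a measurable conjugacy of $\bar{\lambda}_{\bar{Y}_f}$ with $\tilde{\lambda}=\bar{\lambda}_{\bar{Z}_f}\times\bar{\lambda}_{K_f}$. Since measurable conjugacy preserves completely positive entropy, $(\tilde{Z}_f,\tilde{\nu},\tilde{\lambda})$ has completely positive entropy. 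Next, observe that the second coordinate projection $\pi^{(2)}\colon\tilde{Z}_f\to K_f$ is $\Gamma$-equivariant, so its pushforward $\pi^{(2)}_*\tilde{\nu}$ is a $\bar{\lambda}_{K_f}$-invariant Borel probability measure supported on the weak$^*$-closed, bounded, $\bar{\lambda}$-invariant set $B_2(K_f)$. Proposition \ref{p:kernel} gives $\htop(\bar{\lambda}_{B_2(K_f)})=0$, and hence by the variational principle $h_{\pi^{(2)}_*\tilde{\nu}}(\bar{\lambda}_{K_f})=0$.

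Because $(\tilde{Z}_f,\tilde{\nu},\tilde{\lambda})$ has completely positive entropy, its Pinsker sigma-algebra is trivial $(\textup{mod}\;\tilde{\nu})$, and every $\Gamma$-invariant sub-sigma-algebra of zero entropy is contained in the Pinsker sigma-algebra. Consequently the sigma-algebra generated by $\pi^{(2)}$ is trivial $(\textup{mod}\;\tilde{\nu})$, which means $\pi^{(2)}$ is $\tilde{\nu}$-almost everywhere equal to a single point $v^*\in B_2(K_f)$; invariance of $\tilde{\nu}$ forces $v^*$ to be $\bar{\lambda}_{K_f}$-fixed. Thus $\tilde{\nu}$ is concentrated on the Borel set $\bar{Z}_f\times\{v^*\}$, on which $\pi^{(1)}$ is a $\Gamma$-equivariant Borel bijection onto $\bar{Z}_f$. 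Hence $\pi^{(1)}\colon(\tilde{Z}_f,\tilde{\nu})\to(\bar{Z}_f,\nu^\hash)$ is a measurable conjugacy, and composition with $\theta$ yields the desired measurable conjugacy $\bar{\rho}^f=\pi^{(1)}\circ\theta\colon(\bar{Y}_f,\nu)\to(\bar{Z}_f,\nu^\hash)$.

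The main obstacle — really the only nontrivial input beyond Propositions \ref{p:conjugacy} and \ref{p:kernel} — is the Pinsker-algebra step: for actions of countably infinite amenable groups, a zero-entropy measurable factor of a completely positive entropy system is trivial. This is the standard Pinsker characterization of CPE developed for amenable group actions in the Ornstein--Weiss entropy framework (available from the same sources as the variational principle already cited in the paper), so at the level of the write-up it can simply be invoked; the rest is bookkeeping.
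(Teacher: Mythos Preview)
Your proposal is correct and follows essentially the same route as the paper's own proof: invoke Proposition \ref{p:conjugacy} to pass to $(\tilde{Z}_f,\tilde{\nu},\tilde{\lambda})$, use Proposition \ref{p:kernel} to see that the $K_f$-factor has zero entropy, and then use completely positive entropy to force $\pi^{(2)}_*\tilde{\nu}$ to be a point mass, making $\pi^{(1)}$ a conjugacy. Your additional observations (that $v^*$ is $\bar{\lambda}_{K_f}$-fixed and that the resulting conjugacy is realized by $\bar{\rho}^f=\pi^{(1)}\circ\theta$) are correct but not needed for the argument.
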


	\begin{proof}
As in the proof of Proposition \ref{p:conjugacy} we define $\theta _1\colon \bar{Y}_f \to \tilde{Z}_f$ by \eqref{eq:theta-1} and set $\tilde{\nu }^{(1)} = (\theta _1)_*\nu $ and $\tilde{\nu } = (\theta _2)_*\tilde{\nu }^{(1)} = \theta _*\nu $. Then $\tilde{\nu }$ is $\tilde{\lambda }$-invariant by \eqref{eq:equi-2}, and $\pi ^{(1)}_*\tilde{\nu }= \nu ^\hash$. We write $\pi ^{(2)}\colon \tilde{Z}_f\to K_f$ for the second coordinate projection, denote by $\xi _f= \pi ^{(2)}_*\tilde{\nu }$ the projection of $\tilde{\nu }$ onto $K_f$, and note that the $\Gamma $-action $\tilde{\lambda }$ on $(\tilde{Z}_f, \tilde{\nu })$ has the zero-entropy $\Gamma $-action $\bar{\lambda }$ on $(K_f, \xi _f)$ as a factor (cf. Proposition \ref{p:kernel}). Since $\tilde{\lambda }$ on $(\tilde{Z}_f, \tilde{\nu })$ is measurably conjugate to $\bar{\lambda }$ on $(\bar{Y}_f,\nu )$ and thus has completely positive entropy, we obtain a contradiction unless $\xi _f$ is concentrated in a single point.

Since $\xi _f$ is a point mass, the first coordinate projection $\pi ^{(1)}\colon (\tilde{Z}_f, \tilde{\nu }) \to (\bar{Z}_f, \nu ^\hash)$ is injective $(\textup{mod}\;\tilde{\nu })$, and the $\Gamma $-actions $\tilde{\lambda }$ on $(\tilde{Z}_f, \tilde{\nu })$ and $\bar{\lambda }$ on $(\bar{Z}_f, \nu ^\hash)$ are conjugate. This proves that the $\Gamma $-actions $\bar{\lambda }$ on $(\bar{Y}_f,\nu )$ and on $(\bar{Z}_f,\nu ^\hash)$ are measurably conjugate.
	\end{proof}

Having discussed the relation between $\bar{\lambda }$-invariant probability measures on $\bar{Y}_f$ and $\bar{Z}_f$ we turn to the corresponding question for measures on $\bar{Y}_f$ and their images under $\eta $.

	\begin{lemma}
	\label{l:comparison32}
There exists a unique $\bar{\lambda }$-invariant Borel probability measure $\nu _f$ on $\bar{Y}_f$ with $\eta _*\nu _f=\mu _f$, and the map $\eta \colon \bar{Y}_f \to X_f$ induces a conjugacy of the $\Gamma $-actions $\bar{\lambda }$ on $(\bar{Y}_f,\nu _f)$ and $\lambda _f$ on $(X_f,\mu _f)$.
	\end{lemma}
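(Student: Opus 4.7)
The plan is to construct $\nu_f$ explicitly as the pushforward of $\mu_f$ under a Borel cross-section from $X_f$ into the $\bar{\lambda}$-invariant Borel subset $Y_f\subset\bar{Y}_f$, and then to establish uniqueness by showing that every $\bar{\lambda}$-invariant lift of $\mu_f$ must be concentrated on $Y_f$.

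First I would verify that $Y_f=W_f\cap[0,1)^\Gamma$ is $\bar{\lambda}$-invariant: $[0,1)^\Gamma$ is plainly invariant under the left shift, while $W_f$ is invariant because $\bar{\lambda}^\gamma$ commutes with $\bar{\rho}^f$ and preserves $\ell^\infty(\Gamma,\mathbb{Z})$. Since $Y_f$ is a Borel subset of the Polish space $\ell^\infty(\Gamma,\mathbb{R})$ and the continuous map $\eta|_{Y_f}\colon Y_f\to X_f$ is a $\Gamma$-equivariant bijection between standard Borel spaces, the Lusin--Souslin theorem delivers a Borel inverse $\phi\colon X_f\to Y_f\subset\bar{Y}_f$. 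Setting $\nu_f=\phi_*\mu_f$, the relations $\eta\circ\phi=\mathrm{id}_{X_f}$ and $\phi\circ\lambda_f^\gamma=\bar{\lambda}^\gamma\circ\phi$ immediately give $\eta_*\nu_f=\mu_f$, $\bar{\lambda}^\gamma_*\nu_f=\nu_f$, and that $\eta$ is a measurable conjugacy of the two $\Gamma$-systems (its measurable inverse being $\phi$, defined on the $\nu_f$-full measure set $Y_f$).

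For uniqueness, I would take an arbitrary $\bar{\lambda}$-invariant Borel probability measure $\nu$ on $\bar{Y}_f$ with $\eta_*\nu=\mu_f$ and reduce to proving $\nu(Y_f)=1$, since then injectivity of $\eta|_{Y_f}$ forces $\nu=\phi_*\mu_f=\nu_f$. Because $\bar{Y}_f\subset[0,1]^\Gamma$, the complement decomposes as $\bar{Y}_f\setminus Y_f=\bigcup_{\gamma\in\Gamma}A_\gamma$ with $A_\gamma=\{w\in\bar{Y}_f:w_\gamma=1\}$; the $\bar{\lambda}$-invariance of $\nu$ together with $A_\gamma=\bar{\lambda}^{\gamma^{-1}}A_{1_\Gamma}$ yields $\nu(A_\gamma)=\nu(A_{1_\Gamma})$, and the inclusion $\eta(A_\gamma)\subset\{x\in X_f:x_\gamma=0\}$ combined with $\eta_*\nu=\mu_f$ yields $\nu(A_\gamma)\le\mu_f(\{x:x_\gamma=0\})$.

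I expect the main subtlety to be ruling out $\nu(A_{1_\Gamma})>0$, which I would handle via a dichotomy on the coordinate image $p_{1_\Gamma}(X_f)$, a closed subgroup of $\mathbb{T}$ that does not depend on the coordinate by shift-invariance of $X_f$. If $p_{1_\Gamma}(X_f)=\mathbb{T}$, then $(p_{1_\Gamma})_*\mu_f$ is Lebesgue measure, so $\mu_f(\{x:x_{1_\Gamma}=0\})=0$ and hence $\nu(A_{1_\Gamma})=0$. Otherwise $p_{1_\Gamma}(X_f)$ is a finite cyclic group $\tfrac{1}{n}\mathbb{Z}/\mathbb{Z}$, which forces $X_f\subset(\tfrac{1}{n}\mathbb{Z}/\mathbb{Z})^\Gamma$, hence $W_f\subset(\tfrac{1}{n}\mathbb{Z})^\Gamma\cap\ell^\infty(\Gamma,\mathbb{R})$, and therefore $\bar{Y}_f\subset\{0,\tfrac{1}{n},\dots,\tfrac{n-1}{n}\}^\Gamma=Y_f$, so $A_{1_\Gamma}$ is empty. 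In either branch $\nu(\bar{Y}_f\setminus Y_f)=0$, completing uniqueness.
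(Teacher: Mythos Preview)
Your proof is correct and follows essentially the same route as the paper: both arguments show that any $\bar{\lambda}$-invariant lift of $\mu_f$ is concentrated on $Y_f$ via the dichotomy on whether the coordinate projection $\pi_{1_\Gamma}(X_f)\subset\mathbb{T}$ equals $\mathbb{T}$ (so $\mu_f\{x:x_{1_\Gamma}=0\}=0$) or is finite (so $\bar{Y}_f=Y_f$), after which bijectivity of $\eta|_{Y_f}$ forces uniqueness. One small quibble: $\ell^\infty(\Gamma,\mathbb{R})$ is not Polish in the weak$^*$ topology, but since everything lives inside the compact metrizable set $[0,1]^\Gamma$ this is harmless, and your appeal to Lusin--Souslin could in any case be replaced by the explicit coordinatewise formula for the section $\phi$.
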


	\begin{proof}
Let $\nu $ be a $\bar{\lambda }$-invariant Borel probability measure on $\bar{Y}_f$ such that $\eta _*\nu =\mu _f$. If $\nu (\bar{Y}_f\smallsetminus Y_f)>0$, the set
	\begin{displaymath}
V =\{y\in \bar{Y}_f \mid y_{1_\Gamma }=1\}
	\end{displaymath}
must have positive $\nu $-measure, which implies that the closed subgroup
	\begin{displaymath}
H=\{x\in X_f\mid x_{1_\Gamma }=0\} \supset \eta (V)
	\end{displaymath}
has positive $\mu _f$-measure. We set
	\begin{displaymath}
K=\pi _{1_\Gamma }(X_f)\subset \mathbb{T},
	\end{displaymath}
observe that $K$ is a closed subgroup of $\mathbb{T}$, and denote by $\mu _K$ the normalized Haar measure of $K$. Since $\mu _K(\{t\})=\mu _K(\{0\})=\mu _f(H) > 0$ for every $t\in K$, the group $K$ must be finite, which implies that $X_f \subset K^\Gamma $ and hence $Y_f=\bar{Y}_f$, contrary to our assumption that $\nu (\bar{Y}_f\smallsetminus Y_f)>0$. It follows that $\nu (B)=\nu (B\cap Y_f)= \mu _f(\eta (B))$ for every Borel set $B\subset \bar{Y}_f$, as claimed. Hence the map $\eta \colon \bar{Y}_f \to X_f$ induces a measure space isomorphism from $(\bar{Y}_f,\nu _f)$ to $(X_f,\mu _f)$ which carries the $\Gamma $-action $\bar{\lambda }_f$ on $(\bar{Y}_f,\nu _f)$ to $\lambda _f$ on $(X_f,\mu _f)$.
	\end{proof}

	\begin{theorem}
	\label{t:comparison2}
Let $\Gamma $ be a countably infinite discrete amenable group, $f\in \mathbb{Z}\Gamma $, and assume that the principal algebraic $\Gamma $-action $\lambda _f$ on $X_f$ has finite topological entropy. Then $\htop(\bar{\lambda }_{\bar{Z}_f}) = \htop(\bar{\lambda }_{\bar{Y}_f})=\htop(\lambda _f)$.
	\end{theorem}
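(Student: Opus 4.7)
By Proposition~\ref{p:no-drop}(2) we have $\htop(\bar{\lambda}_{\bar{Z}_f})=\htop(\bar{\lambda}_{\bar{Y}_f})$, so only the identity $\htop(\bar{\lambda}_{\bar{Y}_f})=\htop(\lambda_f)$ requires proof. The inequality $\htop(\lambda_f)\le\htop(\bar{\lambda}_{\bar{Y}_f})$ is immediate from the continuous $\Gamma$-equivariant surjection $\eta\colon\bar{Y}_f\to X_f$, so I focus on the reverse bound $\htop(\bar{\lambda}_{\bar{Y}_f})\le\htop(\lambda_f)$.

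My plan is to follow the scheme of Proposition~\ref{p:no-drop}(2), but with $\eta$ playing the role of $\bar{\rho}^f$. Invoking the variational principle \cite{KL16}*{Theorem 9.48},
\[
\htop(\bar{\lambda}_{\bar{Y}_f})=\sup_\nu h_\nu(\bar{\lambda}_{\bar{Y}_f}),
\]
where $\nu$ ranges over $\bar{\lambda}$-invariant Borel probability measures on $\bar{Y}_f$. For each such $\nu$ set $\mu:=\eta_*\nu$; then $\mu$ is $\lambda_f$-invariant with $h_\mu(\lambda_f)\le\htop(\lambda_f)$, and it suffices to prove the $\eta$-analogue of Proposition~\ref{p:no-drop}(1):
\[
h_\nu(\bar{\lambda}_{\bar{Y}_f})=h_\mu(\lambda_f),
\]
from which $\htop(\bar{\lambda}_{\bar{Y}_f})=\sup_\nu h_{\eta_*\nu}\le\htop(\lambda_f)$ follows.

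The plan for this identity is parallel to Proposition~\ref{p:conjugacy}. Use the Borel $\Gamma$-equivariant section $\sigma\colon X_f\to Y_f\subset\bar{Y}_f$, $\sigma(x)=\tilde x$ (the canonical $[0,1)$-lift), and define the Borel cocycle $c(y):=y-\sigma(\eta(y))$, which takes values in the closed $\bar{\lambda}$-invariant set $M:=\eta^{-1}(0_{X_f})=\bar{Y}_f\cap\{0,1\}^\Gamma$. After correcting $c$ by a Borel cobounding function as in~\eqref{eq:cocycle}--\eqref{eq:coboundary}, this should produce, for every $\bar{\lambda}$-invariant $\nu$ on $\bar{Y}_f$, a measurable conjugacy of $(\bar{Y}_f,\nu,\bar{\lambda})$ with a product-type system $(X_f\times M,\tilde\mu,\lambda_f\times\bar{\lambda}|_M)$ in which $\tilde\mu$ has first marginal $\mu$ and its fibres over $X_f$ are supported in a bounded portion of $M$. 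Proposition~\ref{p:product} then yields $h_\nu=h_{\tilde\mu}=h_\mu$, provided that $\htop(\bar{\lambda}|_M)=0$.

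The main obstacle I anticipate is the verification $\htop(\bar{\lambda}|_M)=0$, the $\eta$-analogue of Proposition~\ref{p:kernel}. The argument of Proposition~\ref{p:kernel} exploited the embedding $\eta\colon B_r(K_f)\hookrightarrow X_f$ into a small neighbourhood of $0_{X_f}$ for small $r>0$; for $M\subset\{0,1\}^\Gamma$ the map $\eta$ collapses $M$ to the single point $0_{X_f}$, so a direct adaptation fails. I expect to circumvent this by combining Lemma~\ref{L-small entropy} applied to $(\lambda_f,X_f)$ at $0_{X_f}$ with the continuity of $\bar{\rho}^f$ on $\bar{Y}_f$ and the cocycle structure $y=\sigma(\eta(y))+c(y)$, transferring the resulting entropy estimate from invariant subsets of small neighbourhoods of $0_{X_f}\in X_f$ to the subshift $M$ through the already-established equality $\htop(\bar{\lambda}_{\bar{Z}_f})=\htop(\bar{\lambda}_{\bar{Y}_f})<\infty$; making this transfer rigorous is where I expect the real technical work to lie.
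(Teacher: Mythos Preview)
Your reduction via the variational principle and the equivariant section $\sigma$ is sound (in fact $\sigma$ is already $\Gamma$-equivariant, so no cobounding correction is needed: $\theta(y)=(\eta(y),\,y-\sigma(\eta(y)))$ is directly an equivariant Borel injection of $\bar{Y}_f$ into $X_f\times\{0,1\}^\Gamma$). Proposition~\ref{p:product} would then finish the job \emph{if} the closed shift-invariant set $N\subset\{0,1\}^\Gamma$ carrying the second marginal had $\htop(\bar{\lambda}|_N)=0$. This is precisely where the proposal has a genuine gap, and your suggested workaround via Lemma~\ref{L-small entropy} cannot work: that lemma bounds the entropy of invariant subsets \emph{of $X_f$} contained in small neighbourhoods of $0_{X_f}$, but $\eta$ collapses all of $M=\eta^{-1}(0_{X_f})\cap\bar{Y}_f$ to the single point $0_{X_f}$, so it says nothing about $\htop(\bar{\lambda}|_M)$ as a subshift. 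The vanishing is not soft: Proposition~\ref{L-positive fibre} shows $\htop(\bar{\lambda}|_M)>0$ whenever $f$ \emph{is} a left zero divisor, so the hypothesis must enter in a substantive way. Indeed the paper obtains $\htop(\bar{\lambda}_{\bar{Y}_f}|\eta^{-1}(x))=0$ only \emph{a posteriori}, as a consequence of the proof of Theorem~\ref{t:comparison2} (see Lemma~\ref{l:comparison31}), not as an input to it.

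The paper's argument is entirely different and directly combinatorial. Assuming $\htop(\bar{\lambda}_{\bar{Y}_f})>\htop(\lambda_f)$, one compares $\varepsilon$-separated sets in $\bar{Y}_f$ and in $X_f$ along a F{\o}lner sequence; the surplus separation in $\bar{Y}_f$ over a single $\mathsf{d}_\mathbb{T}^{(F_n)}$-ball is encoded by the subsets of $F_n^{-1}$ recording where the $[0,1)$-lifts jump by $\pm1$. The Sauer--Perles--Shelah lemma (Lemma~\ref{l:SPS}) forces these subsets to shatter a set of size $\ge\beta|F_n|$, and Lemma~\ref{l:induction} converts this into $\dim V_{F_n^{-1}}\ge\beta|F_n|$ for the space in~\eqref{eq:VQ}. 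This contradicts the dimension bound of Lemma~\ref{l:dimension}, whose proof uses exactly the injectivity of $v\mapsto v\cdot f^*$ on $\mathbb{R}\Gamma$, i.e.\ that $f$ is not a left zero divisor. Your scheme never engages this mechanism, and I do not see how to close the gap without an argument of comparable strength.
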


We start the proof of Theorem \ref{t:comparison2} with four lemmas. For any finite subset $F\subset \Gamma $ containing $1_\Gamma $ and any $Q\subset \Gamma $ we put
	\begin{equation}
	\label{eq:interior}
\textup{Int}_FQ=\{\gamma \in \Gamma \mid \gamma F\subset Q\}.
	\end{equation}

	\begin{lemma}[\cite{Leiden}*{Lemma 6.4}]
	\label{l:induction}
Let $V$ be a finite dimensional vector space over $\mathbb{R}$, and let $k>\dim V$. Let $\phi _1,\dots ,\phi _k$ be affine functions on $V$ and $b_1,\dots ,b_k \in \mathbb{R}$. Then there exist $a_1,\dots ,a_k\in\{0,1\}$ such that $\bigcap_{j=1}^k W_j (a_j)= \varnothing $, where
	\begin{displaymath}
W_j (a_j)=
	\begin{cases}
\{x\in V\mid \phi _j(x) < b_j\}&\textup{if}\enspace a_j=0,
	\\
\{x\in V\mid \phi _j(x)\ge b_j\}&\textup{if}\enspace a_j=1.
	\end{cases}
	\end{displaymath}
	\end{lemma}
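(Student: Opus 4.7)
The plan is an induction on $d = \dim V$. The base case $d = 0$ is immediate: $V$ is a single point $v_0$, and for each $j$ exactly one of $\phi_j(v_0) \ge b_j$ or $\phi_j(v_0) < b_j$ holds, so choosing $a_1$ to negate whichever inequality holds makes $W_1(a_1) = \varnothing$, and any completion of the remaining $a_j$'s gives an empty intersection.

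For the inductive step, assume the lemma for all dimensions less than $d$, and let $\dim V = d$ with $k > d$. If some $\phi_j$ is constant, say $\phi_k \equiv c$, then one of the two half-spaces $W_k(0), W_k(1)$ is empty in $V$, so a suitable choice of $a_k$ already makes the full intersection empty. Otherwise the level set $H = \{x \in V : \phi_k(x) = b_k\}$ is an affine hyperplane of dimension $d - 1$, and the inductive hypothesis applied to the $k - 1 > d - 1$ restricted affine functions $\phi_1|_H, \dots, \phi_{k-1}|_H$ yields $a_1, \dots, a_{k-1} \in \{0, 1\}$ with $C \cap H = \varnothing$, where $C := \bigcap_{j=1}^{k-1} W_j(a_j) \subset V$.

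The final step is to promote this emptiness on $H$ to emptiness on $V$ by a judicious choice of $a_k$. Crucially, $C$ is convex as a finite intersection of affine half-spaces, hence connected. Since $C$ avoids $H$, and $V \setminus H$ is the disjoint union of the two open half-spaces $H^+ = \{x \in V : \phi_k(x) > b_k\}$ and $H^- = \{x \in V : \phi_k(x) < b_k\}$, we must have $C \subset H^+$ or $C \subset H^-$. In the first case I take $a_k = 0$, so that $W_k(0) = H^-$ and $C \cap W_k(0) = \varnothing$; in the second case I take $a_k = 1$, so that $W_k(1) = H \cup H^+$ and $C \cap W_k(1) = \varnothing$. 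Either way $\bigcap_{j=1}^k W_j(a_j) = \varnothing$, completing the induction.

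The main obstacle is conceptual rather than computational: one has to see that after the inductive reduction to the hyperplane $H$, the convexity (hence connectedness) of $C$ is precisely what converts a hyperplane-level emptiness back into a one-sided containment on $V$, after which $a_k$ is forced. Without exploiting convexity, the sign pattern furnished by the induction on $H$ would not obviously dictate $a_k$. The separate treatment of a constant $\phi_k$ is also essential, since in that degenerate case $H$ fails to be a codimension-one hyperplane and the induction cannot be applied to it.
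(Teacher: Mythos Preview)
The paper does not supply a proof of this lemma; it is quoted verbatim from \cite{Leiden}*{Lemma 6.4} with no argument given here. Your induction on $\dim V$ is correct: the base case and the constant-$\phi_k$ case are immediate, and in the main step the convexity (hence connectedness) of $C=\bigcap_{j=1}^{k-1}W_j(a_j)$ is precisely what forces $C$ into a single open half-space of $V\smallsetminus H$ once $C\cap H=\varnothing$, determining $a_k$. This is the standard argument, and the paper's label \texttt{l:induction} suggests the cited proof proceeds the same way.
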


	\begin{lemma}
	\label{l:dimension}
Suppose that $f\in \mathbb{R}\Gamma $ satisfies that $\htop(\lambda _f)<\infty $, and that $1_\Gamma \in E=\textup{supp}(f)$. Let $Q\Subset \Gamma $. For every nonzero $v\in \mathbb{R}\Gamma $, the product $v\cdot f^*$ is nonzero {\textup(}since $f$ is not a left zero divisor{\textup)}, and the restriction of $v\cdot f^*$ to $\textup{Int}_EQ$ depends only on the restriction $\pi _Q(v)$ of $v$ to $Q$: for every $v,v'\in \mathbb{R}\Gamma $ with $\pi _Q(v)=\pi _Q(v')$, $\pi _{\textup{Int}_EQ}(v\cdot f^*)=\pi _{\textup{Int}_EQ}(v'\cdot f^*)$. Since the map $v \mapsto \bar{\rho }^fv = v\cdot f^*$ from $\mathbb{R}\Gamma $ to $\mathbb{R}\Gamma $ in \eqref{eq:convolutions} induces an injective map from $\mathbb{R}^Q$ to $\mathbb{R}^{QE^{-1}}$, the linear space
	\begin{equation}
	\label{eq:VQ}
V_Q=\bigl\{v=(v_\gamma )_{\gamma \in Q}\in \mathbb{R}^Q \mid \pi _{\textup{Int}_EQ}(v\cdot f^*) =0\bigr\},
	\end{equation}
has dimension $\dim V_Q \le |QE^{-1}\smallsetminus \textup{Int}_EQ|$\enspace {\textup(}cf. \eqref{eq:interior}{\textup)}.
	\end{lemma}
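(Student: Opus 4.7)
The plan is to verify the three assertions in the lemma — the localization claim, injectivity of the convolution map, and the dimension bound — in that order, with the only nontrivial input being Lemma \ref{l:zero divisor}. First, I would unpack the convolution in coordinates: since
\[
(v\cdot f^*)_\gamma =\sum_{\alpha \in \Gamma } v_\alpha (f^*)_{\alpha ^{-1}\gamma }=\sum_{\alpha \in \Gamma } v_\alpha f_{\gamma ^{-1}\alpha },
\]
the value at $\gamma $ depends only on the coordinates $v_\alpha $ with $\gamma ^{-1}\alpha \in E$, i.e., with $\alpha \in \gamma E$. Because $\gamma \in \textup{Int}_EQ$ means exactly $\gamma E\subset Q$, the restriction $\pi _{\textup{Int}_EQ}(v\cdot f^*)$ is determined by $\pi _Q(v)$ alone. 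The same identity shows that $v\cdot f^*$ is supported in $\{\gamma :\gamma E\cap Q\ne \varnothing \}=QE^{-1}$ whenever $v$ is supported in $Q$, so the linear map $M_f\colon \mathbb{R}^Q\to \mathbb{R}^{QE^{-1}}$, $v\mapsto v\cdot f^*$, is well defined.

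For injectivity of $M_f$, I invoke the finiteness hypothesis $\htop(\lambda _f)<\infty $, which by Lemma \ref{l:zero divisor} forces $f$ not to be a left zero divisor in $\mathbb{R}\Gamma $. If $v\in \mathbb{R}\Gamma $ is finitely supported with $v\cdot f^*=0$, then taking adjoints yields $f\cdot v^*=0$ in $\mathbb{R}\Gamma $, whence $v^*=0$ and so $v=0$. This establishes injectivity of $M_f$ on all of $\mathbb{R}^Q$, viewed as the subspace of $\mathbb{R}\Gamma $ supported in $Q$.

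For the dimension bound I would exploit the geometric fact that $1_\Gamma \in E$ forces $\textup{Int}_EQ\subset Q\subset QE^{-1}$, so it makes sense to speak of vectors in $\mathbb{R}^{QE^{-1}}$ vanishing on $\textup{Int}_EQ$. By definition $M_f(V_Q)$ is contained in this subspace, which has dimension $|QE^{-1}\setminus \textup{Int}_EQ|$, and the injectivity from the previous step gives
\[
\dim V_Q=\dim M_f(V_Q)\le |QE^{-1}\setminus \textup{Int}_EQ|.
\]
The only real content here is the zero-divisor input from Lemma \ref{l:zero divisor}; everything else is bookkeeping about the support of convolutions on $\Gamma $, so I do not anticipate any serious obstacle — the main thing to get right is simply matching the convolution conventions to the definitions of $\bar{\rho }^f$ and $\textup{Int}_EQ$ used in the paper.
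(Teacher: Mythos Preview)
Your proof is correct and follows essentially the same route as the paper's: the paper's argument is simply the terse version of your final paragraph, observing that the induced map $\mathbb{R}^Q\to\mathbb{R}^{QE^{-1}}$ is injective and that $V_Q$ lands in the subspace $\{w\in\mathbb{R}^{QE^{-1}}\mid \pi_{\textup{Int}_EQ}(w)=0\}$ of dimension $|QE^{-1}\smallsetminus\textup{Int}_EQ|$. Your explicit verification of the localization claim and of the support bound via the coordinate formula for $(v\cdot f^*)_\gamma$, together with the adjoint trick $v\cdot f^*=0\Rightarrow f\cdot v^*=0$ to invoke Lemma~\ref{l:zero divisor}, are exactly the intended justifications for the assertions the paper folds into the statement itself.
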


	\begin{proof}
Since $\dim \,(\{w\in \mathbb{R}^{QE^{-1}}\mid \pi _{\textup{Int}_EQ}(w)=0\}) = |QE^{-1}\smallsetminus \textup{Int}_EQ|$ and the map $\mathbb{R}^Q\to \mathbb{R}^{QE^{-1}}$ induced by $\bar{\rho }^f$ is injective, $\dim V_Q\le |QE^{-1}\smallsetminus \textup{Int}_EQ|$, as claimed.
	\end{proof}

For the next lemma we recall that a family of subsets $\mathcal{Z}$ of a finite set $Z$ is said to \emph{scatter} a set $J\subset Z$ if $\mathcal{Z}\cap J= \{C\cap J\mid C\in \mathcal{Z}\} = \mathcal{P}(J)$, the set of all subsets of $J$.

	\begin{lemma}[Sauer-Perles-Shelah \cite{Pajor}, \cite{Sauer}*{Theorem 1}, \cite{Shelah}]
	\label{l:SPS}
Let $Z$ be a finite set with cardinality $n\ge1$ and let $\mathcal{Z}$ be a collection of subsets of $Z$. If $|\mathcal{Z}| > \sum_{i=0}^{k-1}\binom{|Z|}{i}$ for some $k\in \{1,\dots ,|Z|\}$, then $\mathcal{Z}$ scatters a subset $J\subset Z$ of size $k$.
	\end{lemma}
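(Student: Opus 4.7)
The natural approach is induction on $n=|Z|$, designed to mesh with the Pascal identity $\binom{n}{i}=\binom{n-1}{i}+\binom{n-1}{i-1}$ that governs the stated bound. The base case $n=0$ is immediate: the hypothesis then forces $k=0$ and the empty subset is vacuously scattered. The substance lies in the inductive step.

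For the inductive step I would fix an arbitrary element $z\in Z$, set $Z'=Z\smallsetminus\{z\}$, and split $\mathcal{Z}$ into two auxiliary families of subsets of $Z'$:
\begin{equation*}
\mathcal{Z}'=\{A\smallsetminus\{z\}\mid A\in \mathcal{Z}\},\qquad \mathcal{Z}''=\{A\subset Z' \mid A\in \mathcal{Z}\text{ and }A\cup\{z\}\in \mathcal{Z}\}.
\end{equation*}
A short case analysis on each pair $\{B,B\cup\{z\}\}$ with $B\subset Z'$ yields the crucial identity $|\mathcal{Z}|=|\mathcal{Z}'|+|\mathcal{Z}''|$: such a pair contributes $2$ to $|\mathcal{Z}|$ and also $2$ to the right-hand side (once to each family) exactly when both members lie in $\mathcal{Z}$, and contributes $1$ to each side otherwise.

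Next I would argue by contrapositive, assuming $\mathcal{Z}$ scatters no $k$-subset of $Z$. Then $\mathcal{Z}'$ scatters no $k$-subset of $Z'$, since any $J\subset Z'$ shattered by $\mathcal{Z}'$ would also be shattered by $\mathcal{Z}$ (from $A\cap J=(A\smallsetminus\{z\})\cap J$ when $z\notin J$). Moreover $\mathcal{Z}''$ scatters no $(k-1)$-subset of $Z'$: if $\mathcal{Z}''$ were to scatter some $J\subset Z'$, then for any $S\subset J\cup\{z\}$ one selects $C\in \mathcal{Z}''$ with $C\cap J=S\smallsetminus\{z\}$ and takes $C$ itself when $z\notin S$ or $C\cup\{z\}\in \mathcal{Z}$ when $z\in S$, exhibiting $\mathcal{Z}$ as a family that scatters the $k$-set $J\cup\{z\}$, a contradiction. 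The inductive hypothesis, applied on $Z'$ with parameters $k$ and $k-1$ respectively, then gives
\begin{equation*}
|\mathcal{Z}|=|\mathcal{Z}'|+|\mathcal{Z}''|\le \sum_{i=0}^{k-1}\binom{n-1}{i}+\sum_{i=0}^{k-2}\binom{n-1}{i}=\sum_{i=0}^{k-1}\binom{n}{i},
\end{equation*}
contradicting the assumption $|\mathcal{Z}|>\sum_{i=0}^{k-1}\binom{n}{i}$. The only delicate points are bookkeeping: the precise definition of $\mathcal{Z}''$ so that $|\mathcal{Z}'|+|\mathcal{Z}''|=|\mathcal{Z}|$ holds exactly, and the correct transfer of scattered sets between $Z$ and $Z'$. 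No substantial obstacle is anticipated beyond these standard combinatorial verifications; the argument is essentially Pajor's inductive proof of the Sauer--Shelah inequality.
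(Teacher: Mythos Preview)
Your argument is correct: it is the standard inductive proof of the Sauer--Shelah lemma (the one usually attributed to Pajor), and the bookkeeping you flag---the identity $|\mathcal{Z}|=|\mathcal{Z}'|+|\mathcal{Z}''|$ and the transfer of scattered sets---is handled properly. The only cosmetic point is that your base case $n=0$ lies outside the lemma's stated range $n\ge1$, but since you are in effect proving the (slightly stronger) statement for all $n\ge0$ and $k\ge0$, the induction goes through without change.

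There is nothing to compare against: the paper does not supply its own proof of this lemma but simply refers the reader to an external source, so your write-up actually provides more than the paper does here.
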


	\begin{proof}
For a proof see, e.g., \cite{Wiki}.
	\end{proof}

	\begin{lemma}[\cite{CCL}*{Lemma A.1}]
	\label{l:Stirling}
Let $0<\beta <1/2$. Then there exist $\kappa =\kappa (\beta )>0$ and $m_0=m_0(\beta )\in \mathbb{N} = \{1,2,\dots \}$ with
	\begin{displaymath}
\lim\nolimits_{\beta \to0} \kappa (\beta )=0,
	\end{displaymath}
such that
	\begin{displaymath}
\sum\nolimits_{i=0}^{\lfloor \beta m\rfloor} \binom {m}{i} \le e^{\kappa m}
	\end{displaymath}
for all $m\in \mathbb{N}$ with $m\ge m_0$.
	\end{lemma}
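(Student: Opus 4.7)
The plan is to derive the bound from the binomial identity $1 = \sum_{i=0}^{m}\binom{m}{i}\beta^{i}(1-\beta)^{m-i}$ by isolating the initial segment of the sum with a monotonicity argument. Since $\beta<1/2$, the ratio $\beta/(1-\beta)$ is strictly less than $1$, so the sequence $\beta^{i}(1-\beta)^{m-i}$ is strictly decreasing in $i$. For every $i\le \lfloor \beta m\rfloor$ this gives the pointwise lower bound $\beta^{i}(1-\beta)^{m-i}\ge \beta^{\lfloor \beta m\rfloor}(1-\beta)^{m-\lfloor \beta m\rfloor}$. Substituting this into the binomial identity and keeping only the first $\lfloor \beta m\rfloor+1$ terms yields
\begin{equation*}
\beta^{\lfloor \beta m\rfloor}(1-\beta)^{m-\lfloor \beta m\rfloor}\sum_{i=0}^{\lfloor \beta m\rfloor}\binom{m}{i}\le 1,
\end{equation*}
which rearranges to $\sum_{i=0}^{\lfloor \beta m\rfloor}\binom{m}{i}\le \beta^{-\lfloor \beta m\rfloor}(1-\beta)^{-(m-\lfloor \beta m\rfloor)}$.

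Next I would take logarithms of the right-hand side and identify the leading constant. Writing $H(\beta)=-\beta\log\beta-(1-\beta)\log(1-\beta)$ for the (natural-log) binary entropy, the right-hand side has logarithm
\begin{equation*}
-\lfloor \beta m\rfloor\log\beta-(m-\lfloor \beta m\rfloor)\log(1-\beta)=mH(\beta)+r(m,\beta),
\end{equation*}
where the error term $r(m,\beta)$ comes entirely from replacing $\lfloor \beta m\rfloor$ by $\beta m$ and is bounded in absolute value by $|\log\beta|+|\log(1-\beta)|$, independently of $m$. Hence for $m\ge m_{0}(\beta)$ large enough, $r(m,\beta)\le \beta m$, and one may take
\begin{equation*}
\kappa(\beta)=H(\beta)+\beta.
\end{equation*}
It is immediate from l'Hôpital (or the standard fact $x\log x\to 0$ as $x\to 0^{+}$) that $H(\beta)\to 0$ as $\beta\to 0$, and obviously $\beta\to 0$ as $\beta\to 0$, so $\kappa(\beta)\to 0$ as required.

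There is no serious obstacle here; the only mildly delicate point is book-keeping the floor function well enough that the additive error from $\lfloor \beta m\rfloor\ne \beta m$ does not spoil the conclusion $\kappa(\beta)\to 0$. Absorbing that discrepancy into an extra $\beta m$ term (for $m$ past a threshold depending on $\beta$) handles this cleanly and explains why the lemma needs the qualifier ``for all $m\ge m_{0}$.'' Alternatively, if one prefers a bound valid for every $m\ge 1$, one can take $\kappa(\beta)=H(\beta)+2\beta$ and set $m_{0}=\lceil 1/\beta\rceil$; either way the construction is elementary.
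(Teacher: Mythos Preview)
Your argument is correct; it is the standard Chernoff/entropy bound for the lower tail of the binomial. The paper itself gives no proof of this lemma---it is simply quoted from \cite{CCL}*{Lemma~A.1}---so there is nothing to compare against beyond checking your reasoning.

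One small sharpening is worth recording. With $k=\lfloor\beta m\rfloor$ you have
\[
-k\log\beta-(m-k)\log(1-\beta)= -k\log\tfrac{\beta}{1-\beta}-m\log(1-\beta),
\]
and since $\log\tfrac{\beta}{1-\beta}<0$ and $k\le\beta m$, this is at most $-\beta m\log\tfrac{\beta}{1-\beta}-m\log(1-\beta)=mH(\beta)$. In other words your error term $r(m,\beta)$ is actually $\le 0$, so the bound $\sum_{i=0}^{\lfloor\beta m\rfloor}\binom{m}{i}\le e^{mH(\beta)}$ holds for \emph{every} $m\ge1$, and you may take $\kappa(\beta)=H(\beta)$ and $m_0=1$. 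The extra $\beta m$ cushion and the threshold $m_0(\beta)$ in your write-up are harmless but unnecessary.
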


	\begin{proof}[Proof of Theorem \ref{t:comparison2}]
For $\delta \in \Gamma $, the spaces $X_f, Y_f$ do not change if we replace $f = \sum_{\gamma \in \Gamma }f_\gamma \cdot \gamma $ by $\delta f = \sum_{\gamma \in \Gamma }f_\gamma \cdot \delta \gamma $, and $Z_{\delta f}=\bar{\rho }^\delta (Z_f)$. For the proof of this theorem we may therefore assume without loss of generality that $1_\Gamma \in \textup{supp}(f)$.

Since the continuous shift-equivariant map $\eta \colon \ell ^\infty (\Gamma ,\mathbb{R})\to \mathbb{T}^\Gamma $ in \eqref{eq:eta} sends $\bar{Y}_f$ onto $X_f$, we know that $\htop(\bar{\lambda }_{\bar{Y}_f})\ge \htop(\lambda _f)$. It will suffice to show that $\htop(\bar{\lambda }_{\bar{Y}_f})\le \htop(\lambda _f)$.

Denote by $\mathsf{d}_\mathbb{I}(s,t) = |s-t|$ the Euclidean metric on the closed unit interval $\mathbb{I}=[0,1]$ and by $\mathsf{d}_\mathbb{T}$ the metric on $\mathbb{T}$ given by
	\begin{displaymath}
\mathsf{d}_\mathbb{T}(s+\mathbb{Z},t+\mathbb{Z}) = \min_{k\in \mathbb{Z}}|s-t-k|.
	\end{displaymath}
For any $F\Subset \Gamma $ we define continuous pseudometrics $\mathsf{d}_\mathbb{I}^{(F)}$ and $\mathsf{d}_\mathbb{T}^{(F)}$ on $\bar{Y}_f$ and $X_f$, respectively, by
	\begin{gather*}
\mathsf{d}_\mathbb{I}^{(F)}(y, y') \coloneqq \max_{\gamma \in F^{-1}} \mathsf{d}_\mathbb{I}(y_\gamma , y_\gamma '),\enspace y,y'\in \bar{Y}_f,
	\\
\mathsf{d}_\mathbb{T}^{(F)}(x, x') \coloneqq \max_{\gamma \in F^{-1}} \mathsf{d}_\mathbb{T}(x_\gamma , x_\gamma '),\enspace x,x'\in X_f.
	\end{gather*}
For every $\varepsilon >0$ we denote by $\textup{sep}(\bar{Y}_f,\mathsf{d}_\mathbb{I}^{(F)}, \varepsilon )$ and $\textup{sep}(X_f,\mathsf{d}_\mathbb{T}^{(F)}, \varepsilon )$ the maximal cardinalities of $(\mathsf{d}_\mathbb{I}^{(F)},\varepsilon )$-separated subsets of $\bar{Y}_f$ and $(\mathsf{d}_\mathbb{T}^{(F)},\varepsilon )$-separated subsets of $X_f$, respectively.

\smallskip Let $(F_n)$ be a left F{\o}lner sequence of $\Gamma $. By \cite{D06}*{Proposition 2.3} we have
	\begin{gather*}
\htop(\bar{\lambda }_{\bar{Y}_f}) = \sup_{\varepsilon >0} \limsup_{n \to \infty } \frac {\log \textup{sep}(\bar{Y}_f,\mathsf{d}_\mathbb{I}^{(F_n)}, \varepsilon )}{|F_n|},
	\\
\htop(\lambda _f) = \sup_{\varepsilon >0} \limsup_{n \to \infty } \frac {\log \textup{sep}(X_f,\mathsf{d}_\mathbb{T}^{(F_n)}, \varepsilon )}{|F_n|}.
	\end{gather*}
Assume that $\htop(\bar{\lambda }_{\bar{Y}_f}) > \htop(\lambda _f)$. Then we can find some $0<\varepsilon <\frac{1}{\max(10,2\|f\|_1)}$ and $c > 0$ such that, passing to a subsequence of $(F_n)$ if necessary, one has
	\begin{equation}
	\label{eq:separated}
\textup{sep}(\bar{Y}_f,\mathsf{d}_\mathbb{I}^{(F_n)}, \varepsilon ) \ge \textup{sep}(X_f,\mathsf{d}_\mathbb{T}^{(F_n)}, \varepsilon /3) \exp(c|F_n|)
	\end{equation}
for all $n\ge1$.

\smallskip We fix $n\ge1$ for the moment and choose a $(\mathsf{d}_\mathbb{I}^{(F_n)},\varepsilon )$-separated subset $\mathcal{W}_n\subset \bar{Y}_f$ with $|\mathcal{W}_n|=\textup{sep}(\bar{Y}_f,\mathsf{d}_\mathbb{I}^{(F_n)}, \varepsilon )$. Then $\mathcal{W}_n$ is $(\mathsf{d}_\mathbb{I}^{(F_n)},\varepsilon )$-spanning in $\bar{Y}_f$. Since $\eta ^{-1}(X_f)\cap [0,1)^\Gamma $ is dense in $\bar{Y}_f$, we may move some of the points in $\mathcal{W}_n$ by less than $\varepsilon /10$ in the pseudometric $\mathsf{d}_\mathbb{I}^{(F_n)}$, if necessary, and without changing notation, so that $\mathcal{W}_n\subset \bar{Y}_f \cap [0,1)^\Gamma $, while remaining $(\mathsf{d}_\mathbb{I}^{(F_n)},\varepsilon )$-spanning and $(\mathsf{d}_\mathbb{I}^{(F_n)},4\varepsilon /5)$-separated in $\bar{Y}_f$. Similarly, if $\mathcal{V}_n\subset X_f$ is a maximal $(\mathsf{d}_\mathbb{T}^{(F_n)},\varepsilon /3)$-separated subset in $X_f$, then
	\begin{displaymath}
X_f\subset \bigcup\nolimits_{x\in \mathcal{V}_n}B_\mathbb{T}^{(F_n)}(x,\varepsilon /3),
	\end{displaymath}
where $B_\mathbb{T}^{(F_n)}(x,\varepsilon /3)$ is the open $\mathsf{d}_\mathbb{T}^{(F_n)}$-ball in $X_f$ with centre $x$ and radius $\varepsilon /3$, and we can find, for every $n\ge1$, a point $z^{(n)}\in \mathcal{V}_n$ such that $|\eta (\mathcal{W}_n)\cap B_\mathbb{T}^{(F_n)}(z^{(n)},\varepsilon /3)|\ge \exp(c|F_n|)$ (cf. \eqref{eq:separated}). For every $n\ge1$ we set $\mathcal{W}_n'=\{y\in \mathcal{W}_n\mid \eta (y)\in B_\mathbb{T}^{(F_n)}(z^{(n)},\varepsilon /3)\}$ and denote by $\tilde{z}^{(n)}\in [0,1)^{F_n^{-1}}$ the unique point with $z_\gamma ^{(n)}= \tilde{z}_\gamma ^{(n)}\;(\textup{mod\,1})$ for every $\gamma \in F_n^{-1}$.

For every $y\in \mathcal{W}_n'$ there is a unique $\tilde{y}\in \{-1,0,1\}^{F_n^{-1}}$ such that $|y_\gamma -\tilde{y}_\gamma -\tilde{z}_\gamma ^{(n)}| < \varepsilon /3$ for every $\gamma \in F_n^{-1}$. We set
	\begin{displaymath}
G_y^+= \{\gamma \in F_n^{-1}\mid \tilde{y}_\gamma =1\},\qquad G_y^\circ = \{\gamma \in F_n^{-1}\mid \tilde{y}_\gamma =0\}, \qquad G_y^-= \{\gamma \in F_n^{-1}\mid \tilde{y}_\gamma =-1\}.
	\end{displaymath}
Since $\mathcal{W}_n'$ is $(\mathsf{d}_\mathbb{I}^{(F_n)},4\varepsilon /5)$-separated and $G_y^+\cup G_y^\circ \cup G_y^- = F_n^{-1}$, it is clear that $\tilde{y}\ne \tilde{y}'$ and hence $(G_y^+,G_y^-)\ne (G_{y'}^+,G_{y'}^-)$ for any $y\ne y'$ in $\mathcal{W}_n'$.

We recall that $1_\Gamma \in E=\textup{supp}(f)$ and define $\textup{Int}_EF_n^{-1}$ as in \eqref{eq:interior}. For any $y\in \mathcal{W}_n$, the restrictions of $y\cdot f^*$ and $y|_{F_n^{-1}}\cdot f^*$ to $\textup{Int}_EF_n^{-1}$ coincide; since $y\cdot f^* \in \ell ^\infty (\Gamma ,\mathbb{Z})$, this implies that $y|_{F_n^{-1}}\cdot f^*$ and $(y|_{F_n^{-1}}-\tilde{z}^{(n)})\cdot f^*$ have integral coordinates on $\textup{Int}_EF_n^{-1}$. Furthermore, since $|y_\gamma -\tilde{y}_\gamma -\tilde{z}^{(n)}_\gamma |<\varepsilon /3$ for every $\gamma \in F_n^{-1}$, we obtain that
	\begin{displaymath}
\bigl\|\bigl((y|_{F_n^{-1}}-\tilde{y}-\tilde{z}^{(n)}) \cdot f^*\bigr)\big|_{\textup{Int}_EF_n^{-1}}\bigr \|_\infty < \frac{\varepsilon }{3}\cdot \|f\|_1 <1,
	\end{displaymath}
so that
	\begin{displaymath}
v(y)\coloneqq y|_{F_n^{-1}}-\tilde{y}-\tilde{z}^{(n)} \in V_{F_n^{-1}}
	\end{displaymath}
for every $y\in \mathcal{W}_n'$ (cf. \eqref{eq:VQ}).

\smallskip Put $\mathscr{W}_n''=\{(G_y^+, G_y^-)\mid y\in \mathscr{W}_n'\}$, $\mathscr{W}_n^+=\{G_y^+\mid y\in \mathscr{W}_n'\}$ and $\mathscr{W}_n^-=\{G_y^-\mid y\in \mathscr{W}_n'\}$. Since $|\mathcal{W}_n''|= |\mathcal{W}_n'| \ge \exp(c|F_n|)$ it is clear that $\max (|\mathcal{W}_n^+|, |\mathcal{W}_n^-|) \ge \exp(c|F_n|/2)$.

Suppose that $|\mathcal{W}_n^+| \ge \exp(c|F_n|/2)$ for infinitely many $n\ge1$ (if $|\mathcal{W}_n^-| \ge \exp(c|F_n|/2)$ for infinitely many $n$ the proof is completely analogous). By passing to a subsequence we may assume that $|\mathcal{W}_n^+| \ge \exp(c|F_n|/2)$ for every $n\ge1$. By Lemma \ref{l:Stirling} there exists $\beta >0$ such that $\kappa = \kappa (\beta )< c/2$ and $\sum\nolimits_{i=0}^{\lfloor\beta |F_n|\rfloor}{\tbinom{|F_n|}{i}} < \exp(\kappa |F_n|) <\exp(c|F_n|/2) \le |\mathcal{W}_n^+| $ for every sufficiently large $n\ge1$. Lemma \ref{l:SPS} implies that $\mathcal{W}_n^+$ scatters a subset $J_n^+\subset F_n^{-1}$ of size $\ge \beta |F_n^{-1}|$.

\smallskip We are going to show that $\dim V_{F_n^{-1}} \ge |J_n^+|$ for infinitely many $n\ge1$, thereby contradicting Lemma \ref{l:dimension}. For this we define, for every $\gamma \in F_n^{-1}$, a linear functional $\phi _\gamma \colon V_{F_n^{-1}}\to \mathbb{R}$ by setting $\phi _\gamma (v) = v_\gamma $ for every $v\in V_{F_n^{-1}}$. For every $y\in \mathcal{W}_n'$ and $\gamma \in F_n^{-1}$, we have the following possibilities:
	\begin{gather*}
\gamma \in G_y^+\enspace \textup{and}\enspace \phi _\gamma (v(y)) + \tilde{z}_\gamma ^{(n)} = y_\gamma -1 < 0,
	\\
\gamma \in G_y^\circ \enspace \textup{and}\enspace 1 > \phi _\gamma (v(y)) + \tilde{z}_\gamma ^{(n)} = y_\gamma \ge 0,
	\\
\gamma \in G_y^- \enspace \textup{and}\enspace \phi _\gamma (v(y)) + \tilde{z}_\gamma ^{(n)} = y_\gamma +1 \ge 1.
	\end{gather*}
In particular, $\phi _\gamma (v(y))<-\tilde{z}_\gamma ^{(n)}$ if $\gamma \in G_y^+$, and $\phi _\gamma (v(y))\ge -\tilde{z}_\gamma ^{(n)}$ if $\gamma \in F_n^{-1}\smallsetminus G_y^+$.

\smallskip We can thus find, for every subset $H\subset J_n^+$, a $y\in \mathcal{W}_n'$ for which
	\begin{displaymath}
\phi _\gamma (v(y)) <-\tilde{z}_\gamma ^{(n)}\quad \textup{if}\quad \gamma \in H,
	\end{displaymath}
and
	\begin{displaymath}
\phi _\gamma (v(y)) \ge -\tilde{z}_\gamma ^{(n)}\quad \textup{if}\quad \gamma \in J_n^+\smallsetminus H.
	\end{displaymath}
According to Lemma \ref{l:induction} this means that $\dim V_{F_n^{-1}} \ge |J_n^+| \ge \beta |F_n|$. If we set $Q=F_n^{-1}$, where $n$ is sufficiently large, we obtain a contradiction to Lemma \ref{l:dimension}. This contradiction shows that $\htop(\bar{\lambda }_{\bar{Y}_f}) \le \htop(\lambda _f)$ and completes the proof of Theorem \ref{t:comparison2}.
	\end{proof}

	\begin{lemma}
	\label{l:comparison31}
Any $\bar{\lambda }$-invariant Borel probability measure $\nu $ on $\bar{Y}_f$ satisfies that $h_\nu (\bar{\lambda }_{\bar{Y}_f}) = \linebreak[4]h_{\eta _*\nu }(\lambda _f)$.
	\end{lemma}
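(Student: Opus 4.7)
\textit{Proof plan.} One direction is immediate: since $\eta \colon \bar Y_f \to X_f$ is a continuous, surjective, $\Gamma$-equivariant map, $(X_f,\eta_*\nu,\lambda_f)$ is a measure-preserving factor of $(\bar Y_f,\nu,\bar\lambda_{\bar Y_f})$, so $h_{\eta_*\nu}(\lambda_f) \le h_\nu(\bar\lambda_{\bar Y_f})$. For the reverse, set $\mu = \eta_*\nu$. My plan is to invoke the Abramov--Rokhlin addition formula for countable amenable group actions applied to $\eta$, reducing the claim to
\[
h_\nu\bigl(\bar\lambda_{\bar Y_f}\,\big|\,\eta^{-1}(\mathcal B_{X_f})\bigr) \;=\; 0.
\]

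To verify this, I would fix $0 < \varepsilon < 1/(2\|f\|_1)$ and let $\mathcal P_\varepsilon$ be the partition of $\bar Y_f$ pulled back via $y \mapsto y_{1_\Gamma}$ from a partition of $[0,1]$ into intervals of length $\varepsilon$ arranged so that $0$ and $1$ lie in distinct cells. Since the $\bar\lambda$-orbit of $\mathcal P_\varepsilon$ generates the Borel $\sigma$-algebra of $\bar Y_f$ as $\varepsilon \to 0^+$, by Kolmogorov--Sinai it suffices to show $h_\nu(\bar\lambda_{\bar Y_f}, \mathcal P_\varepsilon \,|\, \eta^{-1}(\mathcal B_{X_f})) = 0$ for every such $\varepsilon$. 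Fix a left F{\o}lner sequence $(F_n)$, set $\mathcal A_n = \bigvee_{\gamma \in F_n} \bar\lambda^\gamma \mathcal P_\varepsilon$, and disintegrate $\nu = \int \nu_x\,d\mu(x)$ over $\eta$. Any two distinct lifts $y, y' \in \eta^{-1}(x) \cap \bar Y_f$ satisfy $y - y' \in \{-1, 0, 1\}^\Gamma$; distinct restrictions $y|_{F_n^{-1}}$ therefore differ by $\pm 1$ at some coordinate, fall into different cells of $\mathcal P_\varepsilon$ there, and are $(\mathsf d_\mathbb I^{(F_n)}, 4\varepsilon/5)$-separated. Hence the number of cells of $\mathcal A_n$ meeting the fibre $\eta^{-1}(x)$ equals the number of distinct restrictions $y|_{F_n^{-1}}$ in the fibre.

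To bound the latter, I would extract from the proof of Theorem~\ref{t:comparison2} the following uniform-in-centre statement: for every $c > 0$ there is some $n_0$ such that for $n \ge n_0$ and every $z \in X_f$, the set $\eta^{-1}(B_\mathbb T^{(F_n)}(z, \varepsilon/3))$ contains at most $\exp(c|F_n|)$ $(\mathsf d_\mathbb I^{(F_n)}, 4\varepsilon/5)$-separated points of $\bar Y_f$. The Sauer--Perles--Shelah and dimension argument there derives its contradiction from the existence of a single such centre, the pigeonholing step being used only to produce one; hence the bound holds uniformly in $z$. Taking $z = x$ arbitrary and noting $\eta^{-1}(x) \subset \eta^{-1}(B_\mathbb T^{(F_n)}(x, \varepsilon/3))$, the number of cells of $\mathcal A_n$ meeting $\eta^{-1}(x)$ is at most $\exp(c|F_n|)$ uniformly in $x$, so
\[
H_\nu\bigl(\mathcal A_n \,\big|\, \eta^{-1}(\mathcal B_{X_f})\bigr) = \int_{X_f} H_{\nu_x}(\mathcal A_n)\,d\mu(x) \le c|F_n|.
\]
Dividing by $|F_n|$, letting $n \to \infty$ and then $c \to 0^+$ yields the desired vanishing. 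The principal obstacle is making rigorous the uniform-in-centre version of the combinatorial counting bound from Theorem~\ref{t:comparison2}: the original argument handles only the centre produced by pigeonholing on a maximal $(\mathsf d_\mathbb T^{(F_n)}, \varepsilon/3)$-separated subset of $X_f$, but the contradiction it derives from Lemmas \ref{l:induction}, \ref{l:dimension}, \ref{l:SPS} and \ref{l:Stirling} depends only on the separated-lift count at a single centre and should transfer verbatim to arbitrary $z \in X_f$, including the degenerate case where the ball collapses to a single fibre.
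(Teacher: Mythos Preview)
Your approach is essentially the same as the paper's: both reduce via the Abramov--Rokhlin formula to showing that the relative entropy $h_\nu(\bar\lambda_{\bar Y_f}\mid \eta^{-1}(\mathcal B_{X_f}))$ vanishes, and both extract this from the combinatorial argument in the proof of Theorem~\ref{t:comparison2}. The difference is one of packaging. The paper observes that the proof of Theorem~\ref{t:comparison2} in fact yields $\htop(\bar\lambda_{\bar Y_f}\mid \eta^{-1}(x))=0$ for every $x\in X_f$ (topological fibre entropy in the sense of \cite{Li}*{Definition 6.7}), and then invokes \cite{Li}*{Lemmas 6.8 and 6.9} as a black box to pass from zero topological fibre entropy to zero measure-theoretic relative entropy. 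Your argument unpacks exactly this implication by hand: the ``uniform-in-centre'' bound you isolate is precisely the statement that each topological fibre has zero entropy, and your partition-and-disintegration computation is what \cite{Li}*{Lemmas 6.8--6.9} encapsulate. Your route is more self-contained; the paper's is shorter and cites existing machinery. Both are correct, and your identification of the key point---that the Sauer--Shelah/dimension contradiction in Theorem~\ref{t:comparison2} depends only on a single centre and hence applies uniformly---is exactly right.
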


	\begin{proof}
For every $x\in X_f$ we denote by $\htop\bigl(\bar{\lambda }_{\bar{Y}_f}| \eta ^{-1}(x)\bigr)$ the fibre entropy of $\bar{\lambda }_{\bar{Y}_f}$, given $x$, defined in \cite{Li}*{Definition 6.7}. The proof of Theorem \ref{t:comparison2} shows that $\htop\bigl(\bar{\lambda }_{\bar{Y}_f}| \eta ^{-1}(x)\bigr)=0$ for every $x\in X_f$. By \cite{Li}*{Lemmas 6.8 and 6.9}, $h_\nu (\bar{\lambda }_{\bar{Y}_f}| \eta ^{-1}(\mathcal{B}_{X_f}))=0$ for every $\bar{\lambda }$-invariant Borel probability measure $\nu $ on $\bar{Y}_f$, where $\mathcal{B}_{X_f}$ is the Borel $\sigma $-algebra of $X_f$. By \cite{Danilenko}*{Theorem 0.2} or \cite{KL16}*{Theorem 9.16}, $h_\nu (\bar{\lambda }_{\bar{Y}_f}) = h_\nu (\bar{\lambda }_{\bar{Y}_f}| \mathcal{B}_{X_f}) + h_{\eta _*\nu }(\lambda _f) = h_{\eta _*\nu }(\lambda _f)$.
	\end{proof}

The coincidence of topological entropies of the $\Gamma $-actions $\lambda _f$ and $\bar{\lambda }_{\bar{Y}_f}$ in Theorem \ref{t:comparison2} is not quite as obvious as one might think. As noted in the proof of Lemma \ref{l:comparison31}, the conditional fibre entropy $\htop (\bar{\lambda }_{\bar{Y}_f}|\linebreak[0]\eta ^{-1}(x))$ is equal to zero for every $x\in X_f$ whenever $\Gamma $ is amenable and $f\in \mathbb{Z}\Gamma $ is not a left zero divisor. This is no longer true if $f$ \textsl{is} a left zero divisor (in which case the topological entropies $\htop(\lambda _f)$ and $\htop(\bar{\lambda }_{\bar{Y}_f})$ are infinite by Lemma \ref{l:zero divisor}). A slight modification of the proof of Lemma \ref{l:zero divisor} yields the following result.

	\begin{proposition}
	\label{L-positive fibre}
Let $\Gamma $ be a countably infinite amenable group, and let $f\in \mathbb{Z}\Gamma $ be a left zero divisor in $\mathbb{R}\Gamma $. Then the fibre entropy $\htop(\bar\lambda _{\bar{Y}_f}|\eta ^{-1}(0_{X_f}))$ is positive.
	\end{proposition}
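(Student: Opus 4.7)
The plan is to adapt the construction in the proof of Lemma \ref{l:zero divisor} so as to exhibit, for every small $\varepsilon >0$, an exponentially large family of points $w(\mathbf{a})\in Y_f$ whose $\eta $-images all lie in an $\varepsilon $-neighbourhood of $0_{X_f}$ while remaining pairwise separated in each F{\o}lner pseudometric $\mathsf{d}_\mathbb{I}^{(F_n)}$ at a positive scale. Combining this with the definition of fibre entropy from \cite{Li}*{Definition 6.7} will then yield $\htop(\bar\lambda _{\bar Y_f}|\eta ^{-1}(0_{X_f}))>0$ via a uniform-in-$\varepsilon $ lower bound.

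First I would fix $g=\sum _\gamma g_\gamma \cdot \gamma \in \mathbb{R}\Gamma $ with $fg=0$ and $1_\Gamma \in \textup{supp}(g)$, put $E=\textup{supp}(g^*)$, choose a maximal $D\subset \Gamma $ with pairwise disjoint translates $\delta E$, $\delta \in D$, and select some $\eta _0\in E$ with $g^*_{\eta _0}\ne 0$. For $\varepsilon \in \bigl(0,\tfrac{1}{2\|g\|_\infty }\bigr)$ and any $\mathbf{a}=(a_\delta )_{\delta \in D}\in \{0,\varepsilon \}^D$, the element $\tilde z(\mathbf{a})\coloneqq \sum _{\delta \in D}a_\delta \bar\lambda ^\delta g^*$ is well defined coordinatewise (the supports $\delta E$ being disjoint) and sits in $K_f$, because each $\bar\lambda ^\delta g^*\in K_f$ (as $\bar\rho ^f\bar\lambda ^\delta g^*=\bar\lambda ^\delta (g^*f^*)=0$) and $K_f$ is weak$^*$ closed. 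Defining $w(\mathbf{a})_\gamma =\tilde z(\mathbf{a})_\gamma -\lfloor \tilde z(\mathbf{a})_\gamma \rfloor\in [0,1)$ yields $w(\mathbf{a})=\tilde z(\mathbf{a})+k(\mathbf{a})$ for an integer-valued $k(\mathbf{a})\in \ell ^\infty (\Gamma ,\mathbb{Z})$, so $\bar\rho ^fw(\mathbf{a})=k(\mathbf{a})\cdot f^*\in \ell ^\infty (\Gamma ,\mathbb{Z})$ and $w(\mathbf{a})\in Y_f$; moreover $\eta (w(\mathbf{a}))=\eta (\tilde z(\mathbf{a}))$ satisfies $\mathsf{d}_\mathbb{T}(\eta (w(\mathbf{a}))_\gamma ,0)\le \varepsilon \|g\|_\infty $ uniformly in $\gamma \in\Gamma $.

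For separation, I would observe that if $\mathbf{a}\ne \mathbf{a}'$ differ at some $\delta _0\in D$, then at $\gamma _0=\delta _0\eta _0$ the coordinate $\tilde z(\mathbf{a})_{\gamma _0}-\tilde z(\mathbf{a}')_{\gamma _0}$ equals $\pm \varepsilon g^*_{\eta _0}$, and so $|w(\mathbf{a})_{\gamma _0}-w(\mathbf{a}')_{\gamma _0}|$ is either $\varepsilon |g^*_{\eta _0}|$ or $1-\varepsilon |g^*_{\eta _0}|$; both quantities are bounded below by $\alpha _\varepsilon \coloneqq \varepsilon |g^*_{\eta _0}|>0$ (since $\varepsilon |g^*_{\eta _0}|<1/2$). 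Restricting $\mathbf{a}$ to be supported inside $D_n\coloneqq D\cap F_n^{-1}\eta _0^{-1}$ places $\gamma _0$ in $F_n^{-1}$ whenever $\delta _0\in D_n$, so the family $\{w(\mathbf{a})\mid \mathbf{a}\in \{0,\varepsilon \}^{D_n}\}$ is $(\mathsf{d}_\mathbb{I}^{(F_n)},\alpha _\varepsilon )$-separated. The $EE^{-1}$-interior counting used at the end of the proof of Lemma \ref{l:zero divisor}, combined with the F{\o}lner property, gives $|D_n|\ge c'|F_n|$ for some constant $c'=c'(E)>0$ and all sufficiently large $n$, whence one obtains at least $2^{c'|F_n|}$ pairwise $(\mathsf{d}_\mathbb{I}^{(F_n)},\alpha _\varepsilon )$-separated points inside $Y_f\cap \eta ^{-1}(U_\varepsilon )$, where $U_\varepsilon \coloneqq \{x\in X_f\mid \sup _\gamma \mathsf{d}_\mathbb{T}(x_\gamma ,0)\le \varepsilon \|g\|_\infty \}$ is a neighbourhood of $0_{X_f}$ shrinking to $\{0_{X_f}\}$ with $\varepsilon $.

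Feeding these estimates into the definition of fibre entropy in \cite{Li}*{Definition 6.7} then yields $\htop(\bar\lambda _{\bar Y_f}|\eta ^{-1}(0_{X_f}))\ge c'\log 2>0$. The step I expect to be most delicate is the last one: the separation scale $\alpha _\varepsilon $ degenerates to $0$ with $\varepsilon $, so one must exploit the fact that topological entropy is defined as a supremum over separation scales, concluding that for each fixed $\varepsilon >0$ the (small) scale $\alpha _\varepsilon $ is sufficient to witness the entropy bound $c'\log 2$ on the shrinking set $Y_f\cap \eta ^{-1}(U_\varepsilon )$, and that this bound does not deteriorate as $\varepsilon \to 0$.
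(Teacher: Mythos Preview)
Your construction is essentially correct up to the point you yourself flag, but the gap there is real and your proposed patch does not close it. The fibre entropy $\htop(\bar\lambda _{\bar Y_f}\mid \eta ^{-1}(0_{X_f}))$ in \cite{Li}*{Definition 6.7} is computed on the \emph{actual} fibre $\eta ^{-1}(0_{X_f})\subset \bar Y_f$: one fixes a separation scale first and then counts separated points inside that fibre. Your points $w(\mathbf a)$ lie only in $\eta ^{-1}(U_\varepsilon )$, not in $\eta ^{-1}(0_{X_f})$, and the separation scale $\alpha _\varepsilon =\varepsilon |g^*_{\eta _0}|$ tends to $0$ as the approximating neighbourhood shrinks. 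Thus for no fixed positive scale do your points witness anything about the fibre, and ``taking the sup over scales afterwards'' does not rescue the argument. Worse, with coefficients in $\{0,\varepsilon \}$ the weak$^*$-limits of $w(\mathbf a)$ as $\varepsilon \to 0$ may all collapse to the single point $0$ (e.g.\ when $g^*\ge 0$), so passing to the boundary of $Y_f$ does not automatically help.

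The fix, which is exactly what the paper does, is to use signs: take coefficients in $\{-1,+1\}$ rather than $\{0,1\}$, normalise so that $g^*_{1_\Gamma }=\|g\|_\infty >0$, and set $w^{(c,z)}=c\sum_{\delta \in D}z_\delta \bar\lambda ^\delta g^*$ for $z\in\{-1,+1\}^D$ and small $c>0$. The lift $y^{(c,z)}\in Y_f$ of $\eta (w^{(c,z)})$ then has $y^{(c,z)}_\delta =c\|g\|_\infty $ or $1-c\|g\|_\infty $ according to the sign of $z_\delta $, so as $c\searrow 0$ the points $y^{(c,z)}$ converge in $\bar Y_f$ to limits $y^{(z)}$ that lie \emph{in} $\eta ^{-1}(0_{X_f})$ and are pairwise separated at the fixed scale $1$ along the coordinates in $D$. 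Your F{\o}lner counting $|D\cap F_n^{-1}|\ge c'|F_n|$ then applies verbatim to these limit points and gives $\htop(\bar\lambda _{\bar Y_f}\mid \eta ^{-1}(0_{X_f}))\ge c'\log 2>0$, with no degeneration to worry about.
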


	\begin{proof}
Take a compatible metric $\mathsf{d}$ on $\bar{Y}_f$ such that $\mathsf{d}(y, z)\ge |y_{1_\Gamma }-z_{1_\Gamma }|$ for all $y, z\in \bar{Y}_f$.

If $f\in \mathbb{Z}\Gamma $ is a left zero divisor we choose $g=\sum_{\gamma \in \Gamma }g_\gamma \cdot \gamma \in \mathbb{R}\Gamma $ with $g_{1_\Gamma }=\|g\|_\infty >0$ and $fg=0$. Following the proof of Lemma \ref{l:zero divisor} we note that $cg^*\in W_f$ and $\eta (cg^*)\in X_f$ for every $c\in \mathbb{R}$. Put $E=\textup{supp}(g^*)$ and choose a maximal set $D\subset \Gamma $ such that the translates $\{\delta E\mid \delta \in D\}$ are disjoint. Then $DEE^{-1}=\Gamma $ (cf. the proof of Lemma \ref{l:zero divisor}). Since the sets $\delta E, \delta \in D$, are disjoint, we obtain, for every $z=(z_\delta )_{\delta \in D}\in \{-1,1\}^D$ and every $c\in \mathbb{R}$ with $0<c<\frac {1}{2\|g\|_\infty }$, a point $w^{(c,z)} = c\cdot \sum_{\delta \in D}z_\delta \bar{\lambda }^\delta g^*\in W_f$ with $\|w^{(c,z)}\|_\infty = c\|g\|_\infty $ and $w^{(c,z)}_\delta =cz_\delta \|g\|_\infty $ for every $\delta \in D$.

We set $x^{(c,z)}=\eta (w^{(c,z)})\in X_f$ and denote by $y^{(c,z)}$ the unique point in $Y_f$ satisfying $\eta (y^{(c,z)})\linebreak[0] = x^{(c,z)}= \eta (w^{(c,z)})$. For every $\delta \in D$,
	\begin{displaymath}
y^{(c,z)}_\delta =
	\begin{cases}
c\|g\|_\infty &\textup{if}\enspace z_\delta =1,
	\\
1-c\|g\|_\infty &\textup{if}\enspace z_\delta = -1.
	\end{cases}
	\end{displaymath}
As $c\searrow 0$, $y^{(c,z)}$ converges coordinate-wise to a point $y^{(z)}\in \bar{Y}_f$ with
	\begin{displaymath}
y^{(z)}_\delta =
	\begin{cases}
0&\textup{if}\enspace z_\delta =1,
	\\
1&\textup{if}\enspace z_\delta =-1,
	\end{cases}
	\end{displaymath}
for $\delta \in D$. With the exception of the single point $z'=(z_\delta ')_{\delta \in D}$ with $z_\delta '=1$ for every $\delta \in D$, all the points $y^{(z)}, z\in \{-1,1\}^D$ lie in $\bar{Y}_f\smallsetminus Y_f$ and satisfy that $\eta (y^{(z)}) = 0_{X_f}$. As in the proof of Lemma \ref{l:zero divisor} we conclude that the fibre entropy $\htop(\bar{\lambda }_{\bar{Y}_f}|\eta ^{-1}(0_{X_f}))$ is positive.
	\end{proof}

	\begin{definition}[\cite{Weiss}]
	\label{d:intrinsic}
A continuous action $\tau $ of a countably infinite amenable group $\Gamma $ on a compact metrizable space $X$ is \emph{intrinsically ergodic} if it has finite topological entropy and there exists a unique $\tau $-invariant Borel probability measure $\mu $ on $X$ with $h_\mu (\tau )=\htop(\tau )$.
	\end{definition}

If $\Gamma $ is a countably infinite amenable group, and if $f\in \mathbb{Z}\Gamma $ satisfies that $\htop(\lambda _f)<\infty $, then the principal algebraic action $\lambda _f$ on $X_f$ is intrinsically ergodic (with unique maximal measure $\mu _f$) if and only if $\lambda _f$ has completely positive entropy w.r.t. $\mu _f$ (\cite{Chung-Li}*{Theorem 8.6}). If $\lambda _f$ is intrinsically ergodic on $X_f$, the next result extends this property to the $\Gamma $-actions $\bar{\lambda }_{\bar{Y}_f}$ and $\bar{\lambda }_{\bar{Z}_f}$.

	\begin{proposition}
	\label{p:comparison3}
Suppose that $\Gamma $ is a countably infinite discrete amenable group, $f\in \mathbb{Z}\Gamma $, and $\lambda _f$ is intrinsically ergodic on $X_f$. Then the following are true.
	\begin{enumerate}
	\item
The $\Gamma $-actions $\bar{\lambda }_{\bar{Y}_f}$ and $\bar{\lambda }_{\bar{Z}_f}$ are intrinsically ergodic;
	\item
The maximal entropy measures of the $\Gamma $-actions $\bar{\lambda }_{\bar{Y}_f}$ and $\bar{\lambda }_{\bar{Z}_f}$ have completely positive entropy.
	\end{enumerate}
	\end{proposition}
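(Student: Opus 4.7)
The plan is to assemble the statement from the preceding results, proceeding $(1)\Rightarrow(2)$ and handling $\bar\lambda_{\bar Y_f}$ before $\bar\lambda_{\bar Z_f}$. First I would observe that since $\lambda_f$ is intrinsically ergodic, $\htop(\lambda_f)<\infty$ and (by \cite{Chung-Li}*{Theorem 8.6}, quoted just above) $\lambda_f$ has completely positive entropy with respect to $\mu_f$. Theorem~\ref{t:comparison2} then gives the crucial identity
\begin{displaymath}
\htop(\bar\lambda_{\bar Y_f})=\htop(\bar\lambda_{\bar Z_f})=\htop(\lambda_f)<\infty,
\end{displaymath}
so that intrinsic ergodicity is at least a meaningful notion for the two shift systems.

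Next I would prove intrinsic ergodicity of $\bar\lambda_{\bar Y_f}$. Let $\nu$ be any $\bar\lambda$-invariant Borel probability measure on $\bar Y_f$ with $h_\nu(\bar\lambda_{\bar Y_f})=\htop(\bar\lambda_{\bar Y_f})$. By Lemma~\ref{l:comparison31}, $h_{\eta_*\nu}(\lambda_f)=h_\nu(\bar\lambda_{\bar Y_f})=\htop(\lambda_f)$, so $\eta_*\nu=\mu_f$ by intrinsic ergodicity of $\lambda_f$, and Lemma~\ref{l:comparison32} then forces $\nu=\nu_f$. Thus $\nu_f$ is the unique measure of maximal entropy on $\bar Y_f$; moreover, the conjugacy provided by Lemma~\ref{l:comparison32} transports the completely positive entropy of $\lambda_f$ with respect to $\mu_f$ to $\bar\lambda_{\bar Y_f}$ with respect to $\nu_f$, giving the $\bar Y_f$-half of (2).

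Third, I would transfer to $\bar\lambda_{\bar Z_f}$. Let $\sigma$ be a $\bar\lambda$-invariant maximal entropy measure on $\bar Z_f$. The set of Borel probability measures on $\bar Y_f$ projecting to $\sigma$ under $\bar\rho^f$ is a non-empty, weak$^*$-compact, convex set carried into itself by each $\bar\lambda^\gamma_*$ (because $\sigma$ is $\bar\lambda$-invariant and $\bar\rho^f$ is $\Gamma$-equivariant); by amenability it contains a $\bar\lambda$-invariant element $\nu$. Proposition~\ref{p:no-drop}(1) gives
\begin{displaymath}
h_\nu(\bar\lambda_{\bar Y_f})=h_\sigma(\bar\lambda_{\bar Z_f})=\htop(\bar\lambda_{\bar Z_f})=\htop(\bar\lambda_{\bar Y_f}),
\end{displaymath}
so $\nu=\nu_f$ by the previous step and hence $\sigma=\bar\rho^f_*\nu_f=\nu_f^\hash$. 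Finally, since $\nu_f$ has completely positive entropy, Corollary~\ref{c:conjugacy} applied with $\nu=\nu_f$ shows that $\bar\lambda_{\bar Y_f}$ on $(\bar Y_f,\nu_f)$ and $\bar\lambda_{\bar Z_f}$ on $(\bar Z_f,\nu_f^\hash)$ are measurably conjugate, so $\nu_f^\hash$ likewise has completely positive entropy.

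The only point requiring any care is the amenable averaging that lifts $\sigma$ from $\bar Z_f$ to an invariant measure on $\bar Y_f$, and even this is completely standard. Beyond that, the argument is a direct assembly of Lemmas~\ref{l:comparison31}--\ref{l:comparison32}, Proposition~\ref{p:no-drop}, Corollary~\ref{c:conjugacy}, and the Chung--Li characterization of intrinsic ergodicity, so I do not anticipate a genuine obstacle: the proposition is essentially a clean corollary of the machinery already developed.
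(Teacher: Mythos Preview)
Your proof is correct and follows the paper's approach: uniqueness of the maximal measure on $\bar Y_f$ via Lemmas~\ref{l:comparison31} and~\ref{l:comparison32}, and completely positive entropy transported from $(X_f,\mu_f)$ by the conjugacy of Lemma~\ref{l:comparison32}. The paper is actually terser than you on two points: it does not spell out the uniqueness argument on $\bar Z_f$ at all (your amenable lifting of $\sigma$ to an invariant measure on $\bar Y_f$, followed by Proposition~\ref{p:no-drop}(1), fills a genuine gap in the written proof), and it obtains c.p.e.\ of $\nu_f^\hash$ directly from the factor relation $(\bar Y_f,\nu_f)\to(\bar Z_f,\nu_f^\hash)$ (Lemma~\ref{l:comparison33}) rather than through the conjugacy of Corollary~\ref{c:conjugacy}. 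Both routes to c.p.e.\ are valid; your handling of intrinsic ergodicity on $\bar Z_f$ is more complete than the paper's.
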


The proof of Proposition \ref{p:comparison3} consists of three lemmas.

	\begin{lemma}
	\label{l:comparison33}
If $\lambda _f$ is intrinsically ergodic on $X_f$, then the $\Gamma $-actions $\bar{\lambda }_{\bar{Y}_f}$ on $(\bar{Y}_f,\nu _f)$ and $\bar{\lambda }_{\bar{Z}_f}$ on $(\bar{Z}_f,\nu _f^\hash )$ {\textup(}with $\nu _f^\hash \coloneqq \bar{\rho }^f_*\nu _f${\textup)} have completely positive entropy.
	\end{lemma}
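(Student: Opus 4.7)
The plan is to chain together three results already in the excerpt: the Chung--Li characterization of intrinsic ergodicity as complete positivity of entropy, the measurable conjugacy $\eta\colon (\bar{Y}_f,\nu_f)\to (X_f,\mu_f)$ from Lemma \ref{l:comparison32}, and the measurable conjugacy from Corollary \ref{c:conjugacy}.

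First I would unpack the hypothesis. Intrinsic ergodicity of $\lambda_f$ requires $\htop(\lambda_f)<\infty$ by Definition \ref{d:intrinsic}, and the Chung--Li theorem \cite{Chung-Li}*{Theorem 8.6} cited just above the lemma converts intrinsic ergodicity into the statement that $\lambda_f$ on $(X_f,\mu_f)$ has completely positive entropy. This deals with the base case on the algebraic side.

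Next I would transport complete positive entropy to $\bar{Y}_f$. Lemma \ref{l:comparison32} identifies $\nu_f$ as the unique $\bar{\lambda}$-invariant lift of $\mu_f$ through $\eta$, and asserts that $\eta$ induces a measure-theoretic conjugacy of $\bar{\lambda}_{\bar{Y}_f}$ on $(\bar{Y}_f,\nu_f)$ with $\lambda_f$ on $(X_f,\mu_f)$. Since complete positive entropy is a measurable-conjugacy invariant, $\bar{\lambda}_{\bar{Y}_f}$ on $(\bar{Y}_f,\nu_f)$ has completely positive entropy.

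Finally I would pass to $\bar{Z}_f$. Since $\htop(\lambda_f)<\infty$ and the measure $\nu_f$ just shown to have completely positive entropy is $\bar{\lambda}$-invariant on $\bar{Y}_f$, Corollary \ref{c:conjugacy} applies with $\nu=\nu_f$ and yields a measurable conjugacy between $\bar{\lambda}_{\bar{Y}_f}$ on $(\bar{Y}_f,\nu_f)$ and $\bar{\lambda}_{\bar{Z}_f}$ on $(\bar{Z}_f,\nu_f^\hash)$, where $\nu_f^\hash=\bar{\rho}^f_*\nu_f$. Invariance of complete positive entropy under measurable conjugacy then gives the claim for $(\bar{Z}_f,\nu_f^\hash)$.

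There is no real obstacle here; the work was done in setting up Lemma \ref{l:comparison32} and Corollary \ref{c:conjugacy}. The only point to double-check is that Corollary \ref{c:conjugacy}'s hypotheses are met, namely finite topological entropy of $\lambda_f$ (immediate from intrinsic ergodicity) and complete positive entropy of $\nu_f$ on $\bar{Y}_f$ (established in the previous step via $\eta$). So the lemma follows by assembling these three facts in order.
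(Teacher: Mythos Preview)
Your proof is correct and follows essentially the same path as the paper: both invoke the Chung--Li characterization to get completely positive entropy on $(X_f,\mu_f)$, then use Lemma \ref{l:comparison32} to transport it to $(\bar{Y}_f,\nu_f)$. The only difference is in the final step: you invoke Corollary \ref{c:conjugacy} to get a full measurable conjugacy with $(\bar{Z}_f,\nu_f^\hash)$, whereas the paper simply observes that $\bar{\rho}^f$ makes $(\bar{Z}_f,\nu_f^\hash)$ a \emph{factor} of $(\bar{Y}_f,\nu_f)$ and uses that completely positive entropy passes to factors. Your route works, but the paper's factor argument is more elementary and avoids the machinery of Corollary \ref{c:conjugacy} at this stage.
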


	\begin{proof}
Since $\lambda _f$ on $(X_f,\mu _f)$ is measurably conjugate to $\bar{\lambda }_{\bar{Y}_f}$ on $(\bar{Y}_f,\nu _f)$ by Lemma \ref{l:comparison32}, and $\bar{\lambda }_{\bar{Z}_f}$ on $(\bar{Z}_f,\nu _f^\hash)$ is a factor of $\bar{\lambda }_{\bar{Y}_f}$ on $(\bar{Y}_f,\nu _f)$, all these actions have completely positive entropy.
	\end{proof}

	\begin{proof}[Proof of Proposition \ref{p:comparison3}]
If $\nu $ is a $\bar{\lambda }$-invariant Borel probability measure on $\bar{Y}_f$ with entropy $h_\nu (\bar{\lambda }_{\bar{Y}_f})=\htop(\bar{\lambda }_{\bar{Y}_f})=\htop(\lambda _f)$ (cf. Theorem \ref{t:comparison2}), then Lemma \ref{l:comparison31} implies that $\eta _*\nu =\mu _f$, the unique $\lambda _f$-invariant Borel probability measure on $X_f$ with maximal entropy. By Lemma \ref{l:comparison32}, $\nu =\nu _f$, and the $\Gamma $-actions $\lambda _f$ and $\bar{\lambda }_{\bar{Y}_f}$ on $(X_f,\mu _f)$ and $(\bar{Y}_f,\nu _f)$ are conjugate. Lemma \ref{l:comparison33} completes the proof of Proposition \ref{p:comparison3}.
	\end{proof}

	\begin{theorem}
	\label{t:symbolic}
Suppose that $\Gamma $ is a countably infinite amenable group, $f\in \mathbb{Z}\Gamma $, and the principal algebraic $\Gamma $-action $\lambda _f$ on $X_f$ is intrinsically ergodic. Then the principal algebraic $\Gamma $-action $\lambda _f$ on $(X_f,\mu _f)$ is measurably conjugate to the $\Gamma $-actions $\bar{\lambda }_{\bar{Y}_f}$ and $\bar{\lambda }_{\bar{Z}_f}$ on $(\bar{Y}_f, \nu _f)$ and $(\bar{Z}_f, \nu _f^\hash)$, respectively.
	\end{theorem}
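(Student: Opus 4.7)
The plan is that this theorem is essentially a bookkeeping assembly of three previously established results, so I would simply concatenate the two conjugacies they furnish. First, I would apply Lemma \ref{l:comparison32}: it produces, with no intrinsic ergodicity hypothesis required, a distinguished $\bar{\lambda}$-invariant Borel probability measure $\nu_f$ on $\bar{Y}_f$ characterized by $\eta_*\nu_f=\mu_f$, together with a measurable conjugacy, effected by $\eta$, between $\bar{\lambda}_{\bar{Y}_f}$ on $(\bar{Y}_f,\nu_f)$ and $\lambda_f$ on $(X_f,\mu_f)$. This gives one half of the theorem immediately.

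Next, to conjugate $\bar{\lambda}_{\bar{Y}_f}$ on $(\bar{Y}_f,\nu_f)$ with $\bar{\lambda}_{\bar{Z}_f}$ on $(\bar{Z}_f,\nu_f^\hash)$, where $\nu_f^\hash=\bar{\rho}^f_*\nu_f$, I would invoke Corollary \ref{c:conjugacy} with $\nu=\nu_f$. That corollary requires two inputs. The first, that $\lambda_f$ has finite topological entropy, is built into the definition of intrinsic ergodicity (Definition \ref{d:intrinsic}). The second, that $\nu_f$ has completely positive entropy with respect to $\bar{\lambda}_{\bar{Y}_f}$, is exactly Lemma \ref{l:comparison33}: intrinsic ergodicity of $\lambda_f$ implies, via \cite{Chung-Li}*{Theorem 8.6}, that $\mu_f$ has completely positive entropy, and Step 1's conjugacy transports this property to $\nu_f$. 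Composing the conjugacy from Lemma \ref{l:comparison32} with the one from Corollary \ref{c:conjugacy} completes the proof.

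There is no real obstacle here; all the genuinely technical work was absorbed into Proposition \ref{p:conjugacy} (the construction of $\theta$ and the measurable trivialization of the cocycle $c$), Corollary \ref{c:conjugacy} (eliminating the $K_f$-factor once $\tilde{\nu}$ has completely positive entropy, via the fact that zero-entropy factors must collapse), Lemma \ref{l:comparison32} (the $Y_f$ versus $\bar{Y}_f$ reconciliation using the closed subgroup $K=\pi_{1_\Gamma}(X_f)\subset \mathbb{T}$), and Theorem \ref{t:comparison2} together with Lemma \ref{l:comparison31} (the entropy coincidence that undergirds the intrinsic ergodicity transfer in Proposition \ref{p:comparison3}). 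The present theorem merely records the consequence of chaining them.
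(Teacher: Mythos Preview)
Your proposal is correct and follows essentially the same route as the paper's own proof: invoke Lemma \ref{l:comparison32} for the conjugacy between $(X_f,\mu_f)$ and $(\bar{Y}_f,\nu_f)$, then apply Corollary \ref{c:conjugacy} with $\nu=\nu_f$ (using finite entropy from Definition \ref{d:intrinsic} and completely positive entropy of $\nu_f$, which the paper leaves implicit via the conjugacy and you make explicit via Lemma \ref{l:comparison33}) to obtain the conjugacy with $(\bar{Z}_f,\nu_f^\hash)$.
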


	\begin{proof}
If $\lambda _f$ is intrinsically ergodic on $X_f$, then $\htop(\lambda _f)<\infty $ and $\mu _f$ has c.p.e. (cf. Definition \ref{d:intrinsic}). Lemma \ref{l:comparison32} shows that the $\Gamma $-actions $\lambda _f$ on $(X_f,\mu _f)$ and $\bar{\lambda }$ on $(\bar{Y}_f, \nu _f)$ are measurably conjugate, and the $\Gamma $-actions $\bar{\lambda }_{\bar{Y}_f}$ and $\bar{\lambda }_{\bar{Z}_f}$ on $(\bar{Y}_f, \nu _f)$ and $(\bar{Z}_f, \nu _f^\hash)$ are measurably conjugate by Corollary \ref{c:conjugacy}.
	\end{proof}

	\section{Generators of intrinsically ergodic principal algebraic actions}
	\label{s:generators}

In this section we apply Theorem \ref{t:symbolic} to find generators of intrinsically ergodic principal algebraic actions of a countably infinite amenable group $\Gamma $.

Let $f\in \mathbb{Z}\Gamma $ be a nonzero element such that the principal algebraic $\Gamma $-action $\lambda _f$ on $X_f$ is intrinsically ergodic. We view $X_f\subset \mathbb{T}^\Gamma $ as a subset of $[0,1)^\Gamma $ as in \eqref{eq:eta} -- \eqref{eq:Wf} by identifying $Y_f$ and $X_f$ through $\eta $ and set, for every $j\in \mathbb{Z}$,
	\begin{equation}
	\label{eq:B}
B[j] = \bigl\{x\in X_f\,\mid\,\textstyle{\sum_{\gamma \in \textup{supp}(f)}} f_\gamma x_\gamma = j \bigr\} = \bigl\{x\in X_f\mid (\bar{\rho }^fx)_{1_\Gamma }=j\bigr\}.
	\end{equation}

The following corollaries are immediate consequences of Theorem \ref{t:symbolic}.

	\begin{corollary}
	\label{c:B}
Put
	\begin{equation}
	\label{eq:Bfpm}
\mathcal{B}_f =
	\begin{cases}
\{B[j]\mid j = -\|f^-\|_1 +1, \dots, \|f^+\|_1 - 1\}&\textup{if both $f^+$ and $f^-$ are nonzero},
	\\
\{B[j]\mid j = 0,\dots, \|f^+\|_1 - 1\}&\textup{if $f^+\ne 0$ and $f^-=0$},
	\\
\{B[j]\mid j = -\|f^-\|_1 +1 ,\dots ,0\}&\textup{if $f^+= 0$ and $f^-\ne0$}.
	\end{cases}
	\end{equation}
Then $\mathcal{B}_f$ is a Borel partition of $X_f$ which is a generator $(\textup{mod}\;\mu _f)$ for $\lambda _f$.
	\end{corollary}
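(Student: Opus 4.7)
My plan is to transport the partition through the measurable conjugacy of Theorem~\ref{t:symbolic}, under which $\mathcal{B}_f$ becomes the canonical time-zero partition of the subshift $\bar{Z}_f$. The generator property is then immediate from the standard fact that the coordinate cylinders generate the Borel $\sigma$-algebra of any subshift.

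First I would verify that $\mathcal{B}_f$ is a well-defined Borel partition of $X_f$. Using the bijection $\eta\colon Y_f\to X_f$ recorded in Section~\ref{s:linearization}, every $x\in X_f$ is represented by a unique $y\in Y_f\subset [0,1)^\Gamma $, and then $\sum_{\gamma \in \textup{supp}(f)}f_\gamma x_\gamma = (\bar{\rho}^fy)_{1_\Gamma }$ is an integer because $y\in W_f$. Since $y_\gamma \in [0,1)$ strictly, this integer lies strictly between $-\|f^-\|_1$ and $\|f^+\|_1$; moreover, if $f^-=0$ (resp.\ $f^+=0$) it cannot be negative (resp.\ positive). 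This yields exactly the three cases of \eqref{eq:Bfpm}, so the sets $B[j]$ do partition $X_f$; each is Borel because $x\mapsto (\bar{\rho}^f\eta ^{-1}(x))_{1_\Gamma }$ is a Borel function from $X_f$ to $\mathbb{Z}$.

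The main step is to invoke Theorem~\ref{t:symbolic}: the $\Gamma $-action $\lambda _f$ on $(X_f,\mu _f)$ is measurably conjugate to $\bar{\lambda }_{\bar{Z}_f}$ on $(\bar{Z}_f,\nu _f^\hash)$. Chasing through Lemma~\ref{l:comparison32} and Corollary~\ref{c:conjugacy}, a $\mu _f$-generic $x\in X_f$ is identified with the point $z=\bar{\rho }^f(\eta ^{-1}(x))\in Z_f\subseteq \bar{Z}_f$, so $B[j]$ corresponds $(\textup{mod}\;\mu _f)$ to the cylinder $[j]\coloneqq \{z\in \bar{Z}_f\mid z_{1_\Gamma }=j\}$. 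Consequently, the partition $\mathcal{B}_f$ pulls back precisely to the time-zero partition of the subshift $\bar{Z}_f$.

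Finally, the time-zero partition is a generator for $\bar{\lambda }_{\bar{Z}_f}$: for each $\gamma \in \Gamma $ and each $j$, the translate $\bar{\lambda }^\gamma [j]$ equals the coordinate cylinder $\{z\in \bar{Z}_f\mid z_\gamma =j\}$, and as $\gamma $ and $j$ vary these cylinders generate the Borel $\sigma $-algebra of the second-countable product space $\{c_f^-,\dots ,c_f^+\}^\Gamma \supseteq \bar{Z}_f$ (cf.\ \eqref{eq:bounds}). Transporting this generation property back along the conjugacy proves the claim. The only real subtlety is routine bookkeeping: the identifications $X_f\leftrightarrow Y_f$ (ignoring the $\mu _f$-null set $\eta (\bar{Y}_f\smallsetminus Y_f)$ from Lemma~\ref{l:comparison32}) and $\bar{Y}_f\leftrightarrow \bar{Z}_f$ (from Corollary~\ref{c:conjugacy}) hold only mod null-sets, which is precisely why the generator property is asserted $(\textup{mod}\;\mu _f)$ rather than pointwise.
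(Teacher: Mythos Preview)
Your proof is correct and follows exactly the approach the paper intends: the corollary is stated as an ``immediate consequence'' of Theorem~\ref{t:symbolic}, and you have correctly unpacked that consequence by observing that the composite conjugacy $x\mapsto \bar{\rho}^f(\eta^{-1}(x))$ (via Lemma~\ref{l:comparison32} and Corollary~\ref{c:conjugacy}, where $\pi^{(1)}\circ\theta=\bar{\rho}^f$) carries $\mathcal{B}_f$ to the time-zero cylinder partition of the subshift $\bar{Z}_f$. Your verification that $\mathcal{B}_f$ is a genuine Borel partition, with the case distinction in \eqref{eq:Bfpm} arising from the strict bounds $y_\gamma\in[0,1)$, is a welcome piece of bookkeeping that the paper leaves implicit.
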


	\begin{corollary}
	\label{c:C}
The Borel partition $\mathcal{C}_f = \{C_j\mid j=0,\dots ,\|f\|_1-1\}$ of $X_f$ with
	\begin{displaymath}
C_j = \bigl\{x\in X_f\mid j/\|f\|_1 \le x_{1_\Gamma } < (j+1)/\|f\|_1\;(\textup{mod}\;1)\}\enspace \textup{for}\enspace j=0,\dots ,\|f\|_1-1,
	\end{displaymath}
is a generator $(\textup{mod}\;\mu _f)$ for $\lambda _f$.
	\end{corollary}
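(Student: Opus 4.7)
The plan is to deduce this directly from Corollary \ref{c:B}, by showing that the $\lambda_f$-invariant sigma-algebra generated by $\mathcal{C}_f$ contains the partition $\mathcal{B}_f$ up to $\mu_f$-null sets; once this is established, the fact that $\mathcal{B}_f$ is a generator forces $\mathcal{C}_f$ to be one as well. Throughout I identify $X_f$ with its lift $Y_f\subseteq [0,1)^\Gamma $ via $\eta ^{-1}$, which is $\mu _f$-a.e.\ meaningful by Lemma \ref{l:comparison32}.

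First I would compute the shifts of $\mathcal{C}_f$. From the definitions of $\lambda _f$ and of $C_j$, for every $\gamma \in \Gamma $ and $j\in\{0,\dots ,\|f\|_1-1\}$,
\[
\lambda _f^\gamma C_j = \bigl\{x\in X_f\mid \lfloor \|f\|_1\, x_\gamma \rfloor = j\bigr\}.
\]
Consequently the cell of the finite refinement $\bigvee _{\gamma \in \textup{supp}(f)}\lambda _f^\gamma \mathcal{C}_f$ containing $x$ is encoded exactly by the tuple of integers $a_\gamma (x) := \lfloor \|f\|_1\, x_\gamma \rfloor \in \{0,\dots ,\|f\|_1-1\}$ for $\gamma \in \textup{supp}(f)$.

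The crux is to check that this tuple determines the integer $j(x) := \sum _{\gamma \in \textup{supp}(f)} f_\gamma x_\gamma $ labelling the cell of $\mathcal{B}_f$ containing $x$. Writing $x_\gamma = (a_\gamma (x)+r_\gamma (x))/\|f\|_1$ with $r_\gamma (x)\in [0,1)$, the identity $\rho ^f x=0$ rearranges to
\[
\|f\|_1\, j(x) - \sum _{\gamma \in \textup{supp}(f)} f_\gamma a_\gamma (x) = \sum _{\gamma \in \textup{supp}(f)} f_\gamma r_\gamma (x),
\]
and the right-hand side lies in an interval with endpoints $-\|f^-\|_1$ and $\|f^+\|_1$, of total length $\|f\|_1$, open on each side for which the corresponding $f^\pm $ is nonzero and closed on each side where it vanishes. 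In each of the three cases in \eqref{eq:Bfpm} this interval therefore contains a unique multiple of $\|f\|_1$, so $j(x)$ is a deterministic function of the tuple $(a_\gamma (x))_{\gamma \in \textup{supp}(f)}$.

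Hence every cell of $\mathcal{B}_f$ is a finite union of cells of $\bigvee _{\gamma \in \textup{supp}(f)}\lambda _f^\gamma \mathcal{C}_f$, and in particular $\mathcal{B}_f$ is contained in the $\lambda _f$-invariant sigma-algebra generated by $\mathcal{C}_f$. Since Corollary \ref{c:B} asserts that $\mathcal{B}_f$ generates the full Borel sigma-algebra of $X_f$ mod $\mu _f$ under $\lambda _f$, the same holds for $\mathcal{C}_f$, as required. The only genuinely careful point is the interval-arithmetic bookkeeping at the endpoints, split into the cases $f^+\ne 0\ne f^-$, $f^-=0$, and $f^+=0$; no deeper obstacle arises.
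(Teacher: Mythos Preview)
Your argument is correct and is precisely the elementary computation the paper leaves implicit when it declares both Corollary~\ref{c:B} and Corollary~\ref{c:C} to be immediate consequences of Theorem~\ref{t:symbolic}. The only cosmetic point is that your sentence ``this interval therefore contains a unique multiple of $\|f\|_1$'' should more accurately read that $N+I$, with $N=\sum_\gamma f_\gamma a_\gamma(x)$, contains a unique multiple of $\|f\|_1$ (equivalently, $I$ contains a unique integer in the residue class $-N \pmod{\|f\|_1}$); since $I$ has length $\|f\|_1$ and at least one open endpoint this holds, and your conclusion stands.
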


By imposing further conditions on $\Gamma $ and $f$ we can sometimes find slightly smaller generators $(\textup{mod}\;\mu _f)$ for $\lambda _f$ in Corollary \ref{c:B}.

	\begin{corollary}
	\label{c:B'}
Suppose that the group $\Gamma $ in Theorem \ref{t:symbolic} is left {\textup(}or, equivalently, right{\textup)} orderable. If $f\in \mathbb{Z}\Gamma $ satisfies that $|\textup{supp}(f)|\ge2$, then the collection of sets
	\begin{displaymath}
\mathcal{B}_f' = \bigl\{B[j]\mid j = -\|f^-\|_1 +1, \dots, \|f^+\|_1 - 1\bigr\},
	\end{displaymath}
defined as in \eqref{eq:B}, is a generator $(\textup{mod}\;\mu _f)$ for $\lambda _f$.
	\end{corollary}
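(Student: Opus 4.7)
The plan is to reduce the claim to showing that the single set $B[0]$ is $\mu_f$-null in the cases where it has been omitted from $\mathcal{B}_f'$, which will make $\mathcal{B}_f'$ a Borel partition of $X_f$ (mod $\mu_f$) generating the same $\sigma$-algebra as the known generator $\mathcal{B}_f$ from Corollary~\ref{c:B}.

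First I would compare $\mathcal{B}_f$ and $\mathcal{B}_f'$. When both $f^+$ and $f^-$ are nonzero the two definitions coincide and the assertion is exactly Corollary~\ref{c:B}. Otherwise, after replacing $f$ by $-f$ if necessary (which leaves $X_f$ and $\mu_f$ unchanged), I may assume $f^-=0$, in which case $\mathcal{B}_f=\mathcal{B}_f'\sqcup\{B[0]\}$. Granted $\mu_f(B[0])=0$, the set $B[0]$ equals the complement of $\bigcup\mathcal{B}_f'$ modulo $\mu_f$, so $\mathcal{B}_f'$ is a Borel partition of $X_f$ mod $\mu_f$ generating the same $\bar{\lambda}$-invariant $\sigma$-algebra as $\mathcal{B}_f$.

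Next, with $f^-=0$ and $E=\textup{supp}(f)$, I would observe that $B[0]=\{x\in X_f\mid x_\gamma=0\text{ for every }\gamma\in E\}$: identifying $X_f$ with $Y_f$ via $\eta$, the value $(\bar{\rho}^fx)_{1_\Gamma}=\sum_{\gamma\in E}f_\gamma x_\gamma$ is a sum of nonnegative reals in $[0,1)$ that vanishes only when every term is zero. Thus $B[0]$ is the kernel of the coordinate projection $X_f\to\mathbb{T}^E$, a closed subgroup of $X_f$, and $\mu_f(B[0])>0$ is equivalent to $B[0]$ being open, equivalently to the compact quotient $X_f/B[0]$ being finite. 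If that were the case, then, translating $f$ so that $1_\Gamma\in E$ (which does not alter $X_f$ or $\mu_f$, cf.\ the proof of Theorem~\ref{t:comparison2}), shift-invariance of $X_f$ would force every coordinate projection $\pi_\gamma(X_f)$ to sit inside a common finite subgroup $K\subset\mathbb{T}$, so $X_f\subset K^\Gamma$ and $nX_f=0$ for $n=|K|$. By Pontryagin duality this gives $n\in\mathbb{Z}\Gamma\cdot f$, since $nx=0$ for all $x\in X_f=\widehat{\mathbb{Z}\Gamma/(f)}$ is equivalent to $n\cdot\bar{1}=0$ in $\mathbb{Z}\Gamma/(f)$.

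The hard part, and the only step invoking left-orderability, is to show that no positive integer lies in $\mathbb{Z}\Gamma\cdot f$ when $|\textup{supp}(f)|\ge 2$. An identity $n=gf$ with $n>0$ makes $g/n$ a left inverse of $f$ in $\mathbb{Q}\Gamma$, exhibiting $f$ as a unit of $\mathbb{Q}\Gamma$. Left-orderable groups are known to enjoy the unique product property, and the group ring over a field of a unique product group has only the trivial units $q\gamma$ with $q\in\mathbb{Q}^*$ and $\gamma\in\Gamma$; this forces $|\textup{supp}(f)|=1$, contradicting the hypothesis. Hence $\mu_f(B[0])=0$ and $\mathcal{B}_f'$ is the desired generator $(\textup{mod}\,\mu_f)$ for $\lambda_f$.
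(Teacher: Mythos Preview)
Your proof is correct and follows essentially the same route as the paper. The paper isolates the key step as a separate lemma (Lemma~\ref{l:K}) showing $\pi_{1_\Gamma}(X_f)=\mathbb{T}$, and then observes $B[0]\subset\{x:x_\gamma=0\}$ to conclude $\mu_f(B[0])=0$; you instead argue via finiteness of $X_f/B[0]$, but both arguments reduce to the same algebraic obstruction---a relation $n=gf$ would make $f$ a nontrivial unit in $\mathbb{Q}\Gamma$, which the unique product property (equivalently, Passman's Lemmas~13.1.7 and~13.1.10 for orderable groups) forbids.
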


For the proof of Corollary \ref{c:B'} we require an additional lemma. For notation we refer to Lemma \ref{l:comparison32}.

	\begin{lemma}
	\label{l:K}
Suppose that the group $\Gamma $ in Theorem \ref{t:symbolic} is left {\textup(}or, equivalently, right{\textup)} orderable. If $f\in \mathbb{Z}\Gamma $ satisfies that $|\textup{supp}(f)|\ge2$, then $K = \pi _{1_\Gamma }(X_f) = \mathbb{T}$ and hence $(\pi _{1_\Gamma })_*\mu _f=\mu _\mathbb{T}$, the Lebesgue measure on $\mathbb{T}$.
	\end{lemma}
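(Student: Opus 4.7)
The plan is to show $K=\pi_{1_\Gamma}(X_f)=\mathbb{T}$; once this is established, the claim about $(\pi_{1_\Gamma})_*\mu_f=\mu_\mathbb{T}$ is immediate, since $\pi_{1_\Gamma}$ is then a continuous surjective homomorphism of compact abelian groups and so must push the Haar measure $\mu_f$ to the Haar measure of $\mathbb{T}$ by uniqueness.

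Since $K\subset\mathbb{T}$ is a closed subgroup, either $K=\mathbb{T}$ or $K$ is finite cyclic. Via Pontryagin duality one has $\widehat{X_f}=\mathbb{Z}\Gamma/(f)$ as introduced in Section~\ref{s:introduction}, and under this identification the character $x\mapsto x_{1_\Gamma}$ of $X_f$ corresponds to $\overline{1_\Gamma}$. The image $K$ is finite precisely when this character has finite additive order in $\widehat{X_f}$, i.e., precisely when $n\cdot 1_\Gamma \in (f)=\mathbb{Z}\Gamma f$ for some positive integer $n$; equivalently, when there exists $g\in\mathbb{Z}\Gamma$ with $gf=n\cdot 1_\Gamma$ for some $n\ge 1$. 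So the lemma reduces to showing that no such identity can hold.

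To rule out such a $g$ I would use the fact that every left-orderable group has the two-unique-products property (TUP): for nonempty finite $A,B\subseteq\Gamma$ with $|A|+|B|\ge 3$, there exist two distinct elements $c_1\ne c_2$ of $AB$, each admitting a unique factorisation $c_i=a_ib_i$ with $a_i\in A$, $b_i\in B$ (a classical theorem of Rhemtulla, together with Strojnowski's equivalence UPP$\Leftrightarrow$TUP). Apply this with $A=\textup{supp}(g)$ and $B=\textup{supp}(f)$; note $|B|\ge 2$ by hypothesis. When $|A|\ge 2$, TUP produces two distinct elements $c_1,c_2\in\textup{supp}(gf)$, each carrying the nonzero coefficient $g_{a_i}f_{b_i}$, contradicting $\textup{supp}(gf)=\{1_\Gamma\}$. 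The residual case $|A|=1$ is handled directly: if $g=c\cdot a$ with $c\in\mathbb{Z}\setminus\{0\}$ then $\textup{supp}(gf)=a\cdot\textup{supp}(f)$ already has cardinality $\ge 2$. Either way we reach a contradiction, so $K=\mathbb{T}$.

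The main obstacle is this last paragraph: the duality reduction and the final measure-theoretic conclusion are formal, but the algebraic input that no positive integer lies in $\mathbb{Z}\Gamma f$ when $|\textup{supp}(f)|\ge 2$ is what actually uses the orderability hypothesis, and requires invoking the nontrivial combinatorial structure of orderable group rings.
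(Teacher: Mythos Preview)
Your proof is correct and follows the same overall scheme as the paper: reduce via Pontryagin duality to the statement that no positive integer lies in the left ideal $(f)=\mathbb{Z}\Gamma f$, and then use the unique-product property of orderable groups to rule this out. The only difference is cosmetic packaging of that last algebraic step. You argue directly that $|\textup{supp}(gf)|\ge 2$ via TUP (citing Rhemtulla and Strojnowski), whereas the paper rewrites $hf=L$ as $h\cdot\frac{1}{L}f=1$, observes that $\frac{1}{L}f$ would then be a nontrivial unit in $\mathbb{Q}\Gamma$, and cites Passman's Lemmas 13.1.7 and 13.1.10 (orderable $\Rightarrow$ u.p.\ $\Rightarrow$ only trivial units) for the contradiction. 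Both routes rest on exactly the same combinatorial fact about orderable groups; the paper's formulation is a one-line citation, while yours spells out the mechanism. Incidentally, for left-orderable $\Gamma$ you don't need the full Strojnowski equivalence: the two uniquely represented products can be read off directly as the $\le$-maximum and $\le$-minimum of $AB$ under any left-invariant order.
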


	\begin{proof}
Since $K\subset \mathbb{T}$ is a closed subgroup, it is either finite or equal to $\mathbb{T}$. If $K$ is finite we choose $L\ge1$ such that $LK=\{Lt\mid t\in K\} = \{0\}$ and conclude from \eqref{eq:Xf} that $L$ lies in the left ideal $(f)\subset \mathbb{Z}\Gamma $ generated by $f$. Then there exists $h\in \mathbb{Z}\Gamma $ such that $L=hf$ or, equivalently, $1=h\cdot \frac1L f$. In other words, the rational group ring $\mathbb{Q}\Gamma $ contains the nontrivial unit $\frac1L f$, in violation of \cite{Passman}*{Lemmas 13.1.7 and 13.1.10}.

If $\pi _{1_\Gamma }(X_f) = \mathbb{T}$, then obviously $(\pi _{1_\Gamma })_*\mu _f=\mu _\mathbb{T}$.
	\end{proof}

	\begin{proof}[Proof of Corollary \ref{c:B'}]
If both $f^+$ and $f^-$ are nonzero our assertion follows from Corollary \ref{c:B}. If $f^-=0$, then
	\begin{displaymath}
B[0] = \{x\in X_f\mid \textstyle\sum_{\gamma \in \textup{supp}(f)}f_\gamma x_\gamma =0\} = \{x\in X_f\mid x_\gamma =0\enspace \textup{for every} \enspace \gamma \in \textup{supp}(f)\}.
	\end{displaymath}
Hence $\mu _f(B[0])\le \mu _f(\{x\in X_f\mid x_\gamma =0\})$ for every $\gamma \in \Gamma $, so that $\mu _f(B[0]) = 0$ by Lemma \ref{l:K}. By Corollary \ref{c:B}, $\{B[j]\mid j = 1, \dots ,\|f^+\|_1 - 1\}$ is a generator $(\textup{mod}\;\mu _f)$ for $\lambda _f$.

If $f^+=0$ the proof is completely analogous.
	\end{proof}

\section{Examples}
	\label{s:examples}

Let $\Gamma $ be a countably infinite discrete amenable group, $f\in \mathbb{Z}\Gamma $, and let $\lambda _f$ be the principal algebraic $\Gamma $-action on $X_f$ in Definition \ref{d:principal}. In order to apply Theorem \ref{t:symbolic} and its corollaries to $\lambda _f$ we require the action $\lambda _f$ to be intrinsically ergodic.

\subsection{Intrinsically ergodic principal algebraic $\mathbb{Z}^d$-actions}
	\label{ss:Z-d}
If $\Gamma =\mathbb{Z}^d$ for some $d\ge1$, the conditions for principal algebraic $\Gamma $-actions to be intrinsically ergodic are well understood: if $f$ is nonzero and not divisible by a generalized cyclotomic polynomial, then $\lambda _f$ is intrinsically ergodic \cite{DSAO}*{Theorem 11.2, Propositions 19.4 and 20.5}.

	\begin{example}
	\label{e:4-torus}
The matrix $M=\left[
	\begin{smallmatrix}
\hphantom{-}0&1&0&0
	\\
\hphantom{-}0&0&1&0
	\\
\hphantom{-}0&0&0&1
	\\
-1&1&1&1
	\end{smallmatrix}
\right] \in \textup{SL}(4,\mathbb{Z})$ defines a nonhyperbolic ergodic automorphism $\alpha _M$ of $\mathbb{T}^4$. The question of finding `nice' finite generating partitions for such automorphisms was discussed in \cite{Lind}*{Theorem 1}. By observing that $\alpha _M$ is algebraically conjugate to the intrinsically ergodic algebraic $\mathbb{Z}$-action $\lambda _f$ on $(X_f,\mu _f)$ for the characteristic polynomial $f=u^4-u^3-u^2-u+1$ of $M$ and applying Corollary \ref{c:C} we see that the sets
	\begin{displaymath}
C_j = \bigl\{x\in X_f\mid \tfrac{j}{5}\le x_0 < \tfrac{j+1}{5} \enspace (\textup{mod}\;1)\},\; 0\le j\le 4,
	\end{displaymath}
form a generating partition for $\lambda _f$ w.r.t. $\mu _f$ on $X_f$. When translating this information back to $\alpha _M$ we obtain the generator
	\begin{displaymath}
\mathcal{D} = \bigl\{D_j = \{t=(t_1,t_2,t_3,t_4)\in \mathbb{T}^4\mid \tfrac{j}{5} \le t_1 < \tfrac{j+1}{5}\enspace (\textup{mod}\;1)\}\bigm|j=0,\dots,4\bigr\}
	\end{displaymath}
for $\alpha _M$ w.r.t. Lebesgue measure $\mu _{\mathbb{T}^4}$ on $\mathbb{T}^4$.

Corollary \ref{c:B} shows that $\alpha _M$ on $(\mathbb{T}^4,\mu _{\mathbb{T}^4})$ also has the $4$-element generator corresponding to $\{\negthinspace B[-2], B[-1],B[0],B[1]\}$ in \eqref{eq:Bfpm}.
	\end{example}

	\begin{example}
	\label{e:1+x+y}
Let $\Gamma =\mathbb{Z}^2$, and let $f=1-u_1-u_2\in \mathbb{Z}[u_1^{\pm1},u_2^{\pm1}]\cong \mathbb{Z}[\mathbb{Z}^2]$. By \cite{DSAO}*{Proposition 19.7}, $\htop (\lambda _f)= \frac{3\sqrt{3}}{4\pi }L(2,\chi _3) > 0$, where $L(2,\chi _3)$ is the Dirichlet $L$-function defined there. Since $\lambda _f$ is intrinsically ergodic, Corollary \ref{c:B} shows that $\lambda _f$ on $(X_f,\mu _f)$ has the 2-element generator $\{\negthinspace B[-1],B[0]\}$ $(\textup{mod}\;\mu _f)$ defined as in \eqref{eq:B}.
	\end{example}

\subsection{Intrinsically ergodic principal algebraic actions of amenable groups}
	\label{ss:homoclinic}
If $\Gamma $ is a countably infinite discrete amenable group, establishing the intrinsic ergodicity of a principal algebraic action $\lambda _f$, $f\in \mathbb{Z}\Gamma $, is much more delicate than for $\Gamma =\mathbb{Z}^d$. A sufficient condition for intrinsic ergodicity of $\lambda _f$ can be expressed in terms of homoclinic points: a point $x = (x_s)\in X_f$ is \emph{summable homoclinic} if $\sum_{s \in \Gamma }|\negthinspace|x_s|\negthinspace| < \infty $, where $|\negthinspace|t|\negthinspace|$ denotes the distance from $0$ of a point $t\in \mathbb{T}$ (cf. \cite{LSV2}). Clearly, every summable homoclinic point $x\in X_f$ is \emph{homoclinic}, i.e., $\lim_{s \to \infty }\lambda _f^sx = 0$ (cf. e.g., \cite{homoclinic}*{Definition 3.1} or \cite{Chung-Li}).

	\begin{proposition}
	\label{p:homoclinic}
Let $\Gamma $ be a countably infinite amenable discrete group, $f\in \mathbb{Z}\Gamma $, and let $\Delta ^1(X_f)\linebreak[0]\subset X_f$ be the group of summable homoclinic points of the principal algebraic $\Gamma $-action $\lambda _f$ on $X_f$. If $\Delta ^1(X_f)$ is dense in $X_f$ and $\htop(\lambda _f)<\infty $ then $\lambda _f$ is intrinsically ergodic.
	\end{proposition}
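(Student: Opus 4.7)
The plan is to combine two results from \cite{Chung-Li} and thereby reduce the proposition to black-box structural facts about algebraic actions of amenable groups. First, under the hypothesis $\htop(\lambda_f)<\infty$, \cite{Chung-Li}*{Theorem 8.6} (already invoked earlier in the paper) characterizes intrinsic ergodicity of $\lambda_f$ on $X_f$ as being equivalent to $\lambda_f$ having completely positive entropy (CPE) with respect to the Haar measure $\mu_f$. Consequently, the problem reduces to deducing CPE of $(\lambda_f,\mu_f)$ from the density of $\Delta^1(X_f)$ in $X_f$.

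For the CPE step, I would invoke the notion of the \emph{IE-group} $\textup{IE}(X_f)$ of $\lambda_f$, as developed in \cite{Chung-Li}: this is a closed $\lambda_f$-invariant subgroup of $X_f$ whose cosets encode the independence-entropy (IE) tuples of the action. The two relevant facts are: (i) $(\lambda_f,\mu_f)$ has CPE if and only if $\textup{IE}(X_f)=X_f$; and (ii) every summable homoclinic point lies in $\textup{IE}(X_f)$, i.e.\ $\Delta^1(X_f)\subseteq \textup{IE}(X_f)$. Since $\textup{IE}(X_f)$ is closed in $X_f$, the density hypothesis then forces $X_f=\overline{\Delta^1(X_f)}\subseteq \textup{IE}(X_f)\subseteq X_f$, so that $\textup{IE}(X_f)=X_f$. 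Fact (i) then yields CPE of $(\lambda_f,\mu_f)$, and together with the reduction via \cite{Chung-Li}*{Theorem 8.6}, this establishes intrinsic ergodicity.

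The main obstacle is ingredient (ii), the containment $\Delta^1(X_f)\subseteq \textup{IE}(X_f)$, which is the substantive structural link between summable homoclinic points and combinatorial independence. Its proof requires local entropy theory for amenable group actions (in particular, a careful analysis of IE-tuples via a Rokhlin-lemma-type argument producing positive-entropy cubes from the translates of a summable homoclinic point). I would take this fact, together with the Pinsker-subgroup description in (i), directly from \cite{Chung-Li}; once those two pieces are in hand the rest of the proof is the short synthesis given above.
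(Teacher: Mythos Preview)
Your proposal is correct and follows essentially the same route as the paper's proof: the paper invokes \cite{Chung-Li}*{Theorem 7.8} for the inclusion $\Delta^1(X_f)\subset \textup{IE}(X_f)$ (after noting that $\widehat{X_f}=\mathbb{Z}\Gamma/(f)$ is a finitely generated $\mathbb{Z}\Gamma$-module, which is the hypothesis needed there), and then \cite{Chung-Li}*{Corollary 8.4 and Theorem 8.6} for the passage from $\textup{IE}(X_f)=X_f$ to intrinsic ergodicity. The only detail you might add is that explicit finite-generation remark, but otherwise your argument matches the paper's.
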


The converse of Proposition \ref{p:homoclinic} is clearly not true: the principal algebraic $\mathbb{Z}$-action $\lambda _f$ on $X_f$ (or, equivalently, the automorphism $\alpha _M$ of $\mathbb{T}^4$) in Example \ref{e:4-torus} is intrinsically ergodic, but has no nonzero homoclinic points (cf. \cite{homoclinic}*{Example 3.4}).

	\begin{proof}[Proof of Proposition \ref{p:homoclinic}]
According to \eqref{eq:Xf}, the dual group $\widehat{X_f}$ is given by $\widehat{X_f}=
\mathbb{Z}\Gamma /(f)$ and is, in particular, a finitely generated left $\mathbb{Z}\Gamma $-module. By \cite{Chung-Li}*{Theorem 7.8}, $\Delta ^1(X_f) \subset \textup{IE}(X_f)$, the closed subgroup of $X_f$ defined in \cite{Chung-Li}*{Definition 7.2}, and hence $\textup{IE}(X_f) = X_f$ by assumption. By \cite{Chung-Li}*{Corollary 8.4 and Theorem 8.6}, $\lambda _f$ is intrinsically ergodic.
	\end{proof}

\subsubsection{Expansive principal algebraic actions}
	\label{sss:expansive}

For every countably infinite discrete group $\Gamma $ and every $f\in \mathbb{Z}\Gamma $, the principal algebraic $\Gamma $-action $\lambda _f$ on $X_f$ in Definition \ref{d:principal} is expansive if and only if the map $\bar{\rho }^f\colon \ell ^\infty (\Gamma ,\mathbb{R}) \to \ell ^\infty (\Gamma ,\mathbb{R})$ in \eqref{eq:convolutions} is injective or, equivalently, if $f$ is invertible in $\ell ^1(\Gamma ,\mathbb{R})$ (\cite{Deninger-S}*{Theorem 3.2}). If this is the case, $w^\Delta \coloneqq (f^*)^{-1} \in W_f$ since $\bar{\rho }^f(w^\Delta )=1_\Gamma $. By \cite{Deninger-S}*{Proposition 4.2}, the map $\bar{\rho }^{w^\Delta }\colon \ell ^\infty (\Gamma ,\mathbb{Z}) \to \ell ^\infty (\Gamma ,\mathbb{R})$ is continuous in the weak$^*$-topology on closed, bounded subsets of $\ell ^\infty (\Gamma ,\mathbb{Z})$, and the map $\xi \coloneqq \eta \circ \bar{\rho }^{w^\Delta } \colon \ell ^\infty (\Gamma ,\mathbb{Z})\to \mathbb{T}^\Gamma $ satisfies that $\xi (\{v\in \ell ^\infty (\Gamma ,\mathbb{Z})\mid \|v\|_\infty \le \|f\|_1/2\}) = X_f$ (cf. \cite{Deninger-S}*{Lemma 4.5}). Since $\xi (\mathbb{Z}\Gamma )\subset \Delta ^1(X_f)$, the continuity of $\xi $ implies that $\Delta ^1(X_f)$ is dense in $X_f$. If $\Gamma $ is amenable, we conclude from Proposition \ref{p:homoclinic} that every expansive principal algebraic $\Gamma $-action is intrinsically ergodic.

If $\Gamma $ is amenable and $\lambda _f$ is expansive, the partitions $\mathcal{B}_f$ and $\mathcal{C}_f$ in the Corollaries \ref{c:B} and \ref{c:C} are obviously generators (and not only generators $(\textup{mod}\;\mu _f)$) for $\lambda _f$.

\medskip We mention two examples, taken from \cite{E-R}. Let $\mathbb{H}\subset \textup{SL}(3,\mathbb{Z})$ be the discrete Heisenberg group, generated by
	\begin{equation}
	\label{H-generators}\smash[b]{
u_1 = \left[
	\begin{smallmatrix}
1&0&0
	\\
0&1&1
	\\
0&0&1
	\end{smallmatrix}
\right],
\quad
u_2 = \left[
	\begin{smallmatrix}
1&1&0
	\\
0&1&0
	\\
0&0&1
	\end{smallmatrix}
\right],
\quad \text{and}\quad
u_3 = \left[
	\begin{smallmatrix}
1&0&1
	\\
0&1&0
	\\
0&0&1
	\end{smallmatrix}
\right].}
	\end{equation}

	\begin{example}[\cite{E-R}*{Example 8.4}]
	\label{e:E-R-1}
Let $f = |a_1|+|a_2|+|a_3|+a_1\cdot u_1+a_2\cdot u_2 + a_3\cdot u_3 \in \mathbb{Z}\mathbb{H}$. Then the principal algebraic $\mathbb{H}$-action $\lambda _f$ is expansive if and only if $a_1a_2\ne0$ and $a_3>0$. In these cases the partitions $\mathcal{B}_f$ and $\mathcal{C}_f$ in the Corollaries \ref{c:B} -- \ref{c:C} are generators for $\lambda _f$.
	\end{example}

	\begin{example}
	\label{e:1+x+y-H}
Let $f= 1-u_1-u_2 \in \mathbb{Z}\mathbb{H}$. The principal algebraic $\mathbb{H}$-action $\lambda _f$ on $(X_f,\mu _f)$ has zero entropy by \cite{Deninger}*{Theorem 11} or \cite{Lind-S}*{Theorem 9.2}, so that the hypotheses of Theorem \ref{t:symbolic} are not satisfied. We do not know whether the $\mathbb{H}$-actions $\lambda _f$ on $(X_f,\mu _f)$ and $\bar{\lambda }_{\bar{Z}_f}$ on $(\bar{Z}_f,\nu ^\hash_f)$ are measurably conjugate and whether $\mathcal{B}_f = \{\negthinspace B[-1],B[0]\}$ is a generator $(\textup{mod}\;\mu _f)$ for $\lambda _f$.
	\end{example}

\section{Summable homoclinic points of nonexpansive principal algebraic actions}

The existence of a nonzero summable homoclinic point for a nonexpansive principal algebraic action in the second part of Example \ref{e:1+x+y-H} is an interesting fact. For $\Gamma =\mathbb{Z}^d$ this phenomenon is well understood: for $f = \sum_{\mathbf{n}\in \mathbb{Z}^d}f_\mathbf{n}u^\mathbf{n}\in \mathbb{Z}[\mathbb{Z}^d]$ we denote by
	\begin{equation}
	\label{eq:variety}
\mathsf{U}(f) = \{\mathbf{z}=(z_1,\dots ,z_d)\in \mathbb{S}^d\mid f(\mathbf{z})=0\}
	\end{equation}
the \emph{unitary variety} of $f$, where $\mathbb{S}=\{z\in \mathbb{C}\mid |z|=1\}$. According to \cite{DSAO}*{Theorem 6.5}, $\lambda _f$ is expansive if and only if $\mathsf{U}(f)=\varnothing $. If $f$ is nonzero and irreducible, then \cite{LSV2}*{Theorem 3.2} shows that $\Delta ^1(X_f) \ne \{0\}$ if and only if the dimension of $\mathsf{U}(f)\subset \mathbb{S}^d$ is $\le d-2$. In this case $\Delta ^1(X_f)$ is dense in $X_f$, $\lambda _f$ is intrinsically ergodic, and the partitions $\mathcal{B}_f$ and $\mathcal{C}_f$ are generators $(\textup{mod}\;\mu _f)$ for $\lambda _f$.

For nonabelian groups $\Gamma $, examples of nonexpansive principal algebraic actions with summable homoclinic points are much harder to come by. In order to present a class of such actions we assume for the remainder of this section that $\Gamma $ is a countably infinite discrete group with center $H$. We say $f\in \mathbb{R} \Gamma$ is \emph{well-balanced} (\cite{Bowen-Li}*{Definition 1.2}) if
	\begin{enumerate}
	\item
$\sum_{s\in \Gamma}f_s=0$,
	\item
$f_s\le 0$ for every $s\in \Gamma\smallsetminus \{1_\Gamma\}$,
	\item
$f=f^*$,
	\item
$\textup{supp}(f)$ generates $\Gamma$.
	\end{enumerate}

We shall prove the following theorem.

	\begin{theorem}
	\label{T-Harmonic}
Assume that for any finite $F\subseteq \Gamma$ there is some $s$ in the center $H$ of $\Gamma$ such that none of $s, s^2, s^3$ is in $F$ {\textup(}this happens for example when $H^6=\{s^6\mid s\in H\}$ is infinite{\textup)}. Also assume that $\Gamma$ is not virtually $\mathbb{Z}$ or $\mathbb{Z}^2$. Let $f\in \mathbb{Z}\Gamma$ be well-balanced. Then $\Delta ^1(X_f)$ is dense in $X_f$.
	\end{theorem}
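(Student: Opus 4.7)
The plan is to exploit the random-walk structure encoded in a well-balanced $f$. Set $c\coloneqq f_{1_\Gamma}=\sum_{s\ne 1_\Gamma}|f_s|$, which is positive since $\textup{supp}(f)$ generates the infinite group $\Gamma$. Then $P\coloneqq I-c^{-1}f$ acts on $\ell^2(\Gamma,\mathbb{R})$ as convolution by a symmetric, finitely supported probability measure whose support generates $\Gamma$. By Varopoulos' characterization of recurrent finitely generated amenable groups, such a random walk is transient if and only if $\Gamma$ is not virtually $\mathbb{Z}$ or $\mathbb{Z}^2$, so our hypotheses place us in the transient regime. Accordingly, the Green's function $G\coloneqq\sum_{n\ge 0}P^n\delta_{1_\Gamma}$ converges coordinatewise, lies in $\ell^\infty(\Gamma,\mathbb{R})$, tends to $0$ at infinity, and, using $f=f^*$ (which forces $G=G^*$), satisfies $G\cdot f=c\,\delta_{1_\Gamma}$. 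In particular $G/c\in W_f$ and $\eta(G/c)\in X_f$ is already a homoclinic point, though typically not a summable one.

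The crucial next step is to upgrade $G$ to genuinely summable homoclinic points by iterated differencing along the centre $H$. Any central $s\in H$ commutes with every element of $\Gamma$ and hence with $f$, so for each such $s$ the element $w_s\coloneqq(1-s)(1-s^2)(1-s^3)G$ satisfies $w_s\cdot f=c(1-s)(1-s^2)(1-s^3)\delta_{1_\Gamma}\in\mathbb{Z}\Gamma$; provided $w_s\in\ell^1(\Gamma,\mathbb{R})$, the image $\eta(w_s/c)$ is a summable homoclinic point of $X_f$. The analytic heart of the proof is this $\ell^1$-estimate: each factor $(1-s^k)$ acts as a discrete derivative in the $s$-direction and sharpens the decay of $G$. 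On $\mathbb{Z}^d$ with $d\ge 3$ the Green's function decays like $|x|^{2-d}$ and three iterated differences turn this into $|x|^{-d-1}$, the minimum needed to achieve summability in the critical case $d=3$. For a general transient amenable $\Gamma$ I would work from the expansion $w_s=\sum_{n\ge 0}(1-s)(1-s^2)(1-s^3)P^n\delta_{1_\Gamma}$, bound each summand using the commutativity of $s$ with $P$ (so that $(1-s^k)P^n\delta_{1_\Gamma}$ is a difference of two shifted $\ell^2$-vectors, controllable by spectral or heat-kernel methods), and sum over $n$ using transience. The hypothesis that $s,s^2,s^3$ can be made to avoid any prescribed finite set of $\Gamma$ is precisely what allows one to push $s$ so far out that $w_s$ is effectively localized near $\{1_\Gamma,s^{\pm 1},\ldots,s^{\pm 6}\}$, which is what closes the estimate.

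The main obstacle I expect is carrying out this $\ell^1$-bound in the full generality of the statement, without any assumption on the growth of $\Gamma$; I would lean on the analogous harmonic-model calculations in \cite{Bowen-Li}, where well-balanced elements were introduced and closely related summability estimates are developed. Once summable homoclinic points $\eta(w_s/c)$ have been produced for a cofinal family of central $s$, density of $\Delta^1(X_f)$ in $X_f$ follows by a Pontryagin duality argument. The annihilator in $\widehat{X_f}=\mathbb{Z}\Gamma/(f)$ of the closed subgroup generated by the $\lambda_f$-orbits of these points is a $\mathbb{Z}\Gamma$-submodule $N$, and the condition for a class $h+(f)\in N$ translates into integrality of an appropriate convolution of $h$ with each $w_s$. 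Letting $s,s^2,s^3$ escape to infinity separates the three shifted copies of $G$ inside $w_s$ from the finite support of $h$ and, combined with the identity $G\cdot f=c\,\delta_{1_\Gamma}$, forces $h\in (f)$. Hence $N=0$ and the closure of $\Delta^1(X_f)$ is all of $X_f$.
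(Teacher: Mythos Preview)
Your setup is on the right track: writing $f=f_{1_\Gamma}(1-p)$ with $p$ a symmetric probability measure, invoking Varopoulos to get the Green's function $\omega=\sum_{j\ge 0}p^j\in C_0(\Gamma,\mathbb{R})$, and then differencing along the center to produce summable homoclinic points is exactly the skeleton of the paper's proof. But two things go wrong in the details.

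First, you have misread the role of the hypothesis that $s,s^2,s^3$ can avoid any finite set. It is \emph{not} used to close the $\ell^1$ estimate. In the paper, $(1-s)^3\omega\in\ell^1(\Gamma,\mathbb{R})$ holds for \emph{every} nontrivial central $s$ (Corollary~\ref{C-summable}), with no ``$s$ far out'' assumption. The hypothesis enters only at the density step: one quotes \cite{Bowen-Li} to know that $\Delta(X_f)$ is dense and is the $\Gamma$-invariant group generated by $\eta(f_{1_\Gamma}^{-1}\omega)$, and then observes that since $\omega\in C_0$ and $(1-s_n)^3$ has support $\{1_\Gamma,s_n,s_n^2,s_n^3\}$, letting $s_n,s_n^2,s_n^3\to\infty$ gives $\eta((1-s_n)^3 f_{1_\Gamma}^{-1}\omega)\to\eta(f_{1_\Gamma}^{-1}\omega)$. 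That is the entire use of the hypothesis; your Pontryagin annihilator argument is unnecessary (and as written, incomplete).

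Second, and more seriously, the $\ell^1$ estimate is the analytic heart of the theorem, and your plan to handle it by ``spectral or heat-kernel methods'' and a reference to \cite{Bowen-Li} does not work: \cite{Bowen-Li} contains no such summability result, and there is no usable heat-kernel decay for a general amenable group that is merely not virtually $\mathbb{Z}$ or $\mathbb{Z}^2$. The paper's argument is entirely different and purely Banach-algebraic. One picks $k$ with $s\in\textup{supp}(p^k)$, splits $p^k=q+r$ with $q=c\cdot s$ and $c=(p^k)_s>0$, and shows (Proposition~\ref{P-multiplier}) that the formal inverse
\[
(c-q)^3(1-p^k)^{-1}=\sum_{j\ge 0} r^j(c-q)^3(1-q)^{-(j+1)}
\]
converges in $\ell^1$. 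The convergence is established by a careful analysis of the scalar power series $\phi_j(x)=(c-x)^3\sum_{m\ge 0}\binom{m+j}{j}x^m$, showing $(1-c)^j|\phi_j|(c)=\mathcal{O}(j^{-3/2})$ via explicit location of the sign changes of the coefficients (Lemmas~\ref{L-bound for combinatorial}--\ref{L-summable1}). This yields $b\in\ell^1$ with $b(1-p)=(1-s)^3$, and uniqueness in $C_0$ forces $b=(1-s)^3\omega$. None of this is visible from your sketch, and the third power of $(1-s)$ (rather than your $(1-s)(1-s^2)(1-s^3)$) is what the estimate actually delivers; the hypothesis on $s,s^2,s^3$ then matches the support of $(1-s)^3$.
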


	\begin{corollary}
	\label{c:balanced}
Assume that the group $\Gamma $ in Theorem \ref{T-Harmonic} is amenable. If $f\in \mathbb{Z}\Gamma $ is well-balanced, then the principal algebraic $\Gamma $-action $\lambda _f$ on $X_f$ is intrinsically ergodic.
	\end{corollary}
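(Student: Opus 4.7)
The plan is to feed Theorem~\ref{T-Harmonic} into Proposition~\ref{p:homoclinic}. Proposition~\ref{p:homoclinic} has two hypotheses on $\lambda_f$: that $\Delta^1(X_f)$ is dense in $X_f$, and that $\htop(\lambda_f)<\infty$. The first is exactly what Theorem~\ref{T-Harmonic} supplies, so the substantive task is to verify finite topological entropy.

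By Lemma~\ref{l:zero divisor}, finite topological entropy of $\lambda_f$ is equivalent to $f$ not being a left zero divisor in $\mathbb{R}\Gamma$. I would establish this via a maximum-principle argument tailored to the well-balanced structure. Write
\[
f = d\cdot 1_\Gamma - \sum_{s\ne 1_\Gamma}a_s\cdot s,
\]
with $a_s := -f_s \ge 0$ by condition~(2), $d := f_{1_\Gamma} = \sum_{s\ne 1_\Gamma} a_s > 0$ by condition~(1) (since $f\ne 0$), and $a_s = a_{s^{-1}}$ by condition~(3). Any $g = \sum_\gamma g_\gamma \cdot \gamma \in \mathbb{R}\Gamma$ with $fg=0$ then satisfies, upon unwinding the convolution,
\[
d\,g_\gamma = \sum_{s\ne 1_\Gamma} a_s\, g_{s^{-1}\gamma}\quad \textup{for every }\gamma\in\Gamma,
\]
i.e., $g$ is harmonic for the symmetric random walk on $\Gamma$ with step weights $a_s/d$.

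Now the classical maximum principle takes over. Since $g$ has finite support, $M := \max_\gamma g_\gamma$ is attained at some $\gamma_0 \in \Gamma$, and harmonicity at $\gamma_0$ (combined with $\sum_s a_s = d$) rewrites as $\sum_{s\ne 1_\Gamma} a_s(M - g_{s^{-1}\gamma_0}) = 0$, a sum of nonnegative terms; hence $g_{s^{-1}\gamma_0} = M$ for every $s$ with $a_s > 0$. Iterating, $g$ equals $M$ on every element of the form $w\gamma_0$ with $w$ a word in $(\textup{supp}(f)\smallsetminus\{1_\Gamma\})^{-1}$. Because $f=f^*$ makes $\textup{supp}(f)\smallsetminus\{1_\Gamma\}$ symmetric and condition~(4) makes it generate $\Gamma$, this propagates $g\equiv M$ throughout the (infinite) group $\Gamma$. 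Finite support of $g$ then forces $M\le 0$, and the parallel argument applied to $\min g$ forces $\min g \ge 0$, so $g=0$. This contradiction shows $f$ is not a left zero divisor, hence $\htop(\lambda_f)<\infty$.

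Combining Theorem~\ref{T-Harmonic} (density of $\Delta^1(X_f)$) with finite entropy, Proposition~\ref{p:homoclinic} delivers the intrinsic ergodicity of $\lambda_f$ on $X_f$. The potential obstacle, density of summable homoclinic points, has already been absorbed into Theorem~\ref{T-Harmonic}; the remaining finite-entropy verification is a routine maximum-principle argument once one recognizes well-balancedness as encoding a symmetric, irreducible random walk (equivalently, a discrete Laplacian) on $\Gamma$.
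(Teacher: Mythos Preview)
Your proposal is correct and follows the same overall architecture as the paper: invoke Theorem~\ref{T-Harmonic} for density of $\Delta^1(X_f)$, verify $\htop(\lambda_f)<\infty$ via Lemma~\ref{l:zero divisor} by showing $f$ is not a left zero divisor, and conclude with Proposition~\ref{p:homoclinic}. The only difference is in how the zero-divisor step is justified: the paper points to the proof of Corollary~\ref{C-summable}, where the ``well-known'' fact that $x(1-p)=0$ forces $x=0$ for $x\in C_0(\Gamma,\mathbb{R})$ is invoked (combined with $f=f^*$ to pass from left to right zero divisors), whereas you write out the maximum-principle argument directly for finitely supported $g$. Your version is slightly more self-contained, since it does not implicitly lean on the Varopoulos transience result needed to place $\omega$ in $C_0$; but substantively the two arguments are the same maximum principle for the random walk encoded by $f$.
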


	\begin{proof}
The proof of Corollary \ref{C-summable} implies that $f$ is not a left zero divisor in $\mathbb{Z}\Gamma $, so that $\htop(\lambda _f)<\infty $ by Lemma \ref{l:zero divisor}. Now apply Theorem \ref{T-Harmonic} and Proposition \ref{p:homoclinic}.
	\end{proof}

The proof of Theorem \ref{T-Harmonic} requires a brief excursion into Banach algebras.

	\begin{lemma}
	\label{L-summable}
Let $\mathcal{A}$ be a unital Banach algebra such that $\|ab\|\le \|a\|\cdot \|b\|$ for all $a, b\in \mathcal{A}$.
Let $0<c<1$ and $q, r\in \mathcal{A}$ such that $\|r\|\le 1-c$ and $\|q\|\le c$.
Then
	\begin{displaymath}
\sum_{k=0}^\infty \|r^k(c-q)^3(1-q)^{-(k+1)}\|<\infty.
	\end{displaymath}
	\end{lemma}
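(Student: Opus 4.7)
The proof will rest on the elementary identity $(c-q)(1-q)^{-1} = 1 - (1-c)(1-q)^{-1}$, obtained from $c - q = (1-q) - (1-c)\cdot 1$. Setting $s := 1-c$ and $p := 1-q$, so that $\|r\| \le s$ and $\|p^{-1}\| \le 1/s$, and noting that $c-q = p - s$ commutes with $p$ and with $p^{-1}$, this iterates to give the cubic identity
$(c-q)^3 p^{-3} = (1 - sp^{-1})^3.$
The initial terms $k \in \{0, 1\}$ contribute finitely via the bound $\|(c-q)^3 p^{-(k+1)}\| \le (2c)^3 / s^{k+1}$, so the work is at $k \ge 2$.

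For $k \ge 2$, I would use the identity to recast the summand as
$r^k (c-q)^3 p^{-(k+1)} = r^k p^{-k}\cdot Y,$
where $Y := (c-q)^3 p^{-1} = p^2(1 - sp^{-1})^3$ is a fixed element of norm at most $8c^3/s$; this uses that $(c-q)^3$ commutes with $p^{\pm k}$. Writing $L_r(a) := ra$ and $R_{p^{-1}}(a) := ap^{-1}$ for the commuting left- and right-multiplication operators on $\mathcal{A}$, of operator norm at most $s$ and $1/s$ respectively, the summand equals $(L_r R_{p^{-1}})^k(Y)$, with $\|L_r R_{p^{-1}}\|_{\mathrm{op}} \le 1$. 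The naive bound $\|(L_r R_{p^{-1}})^k(Y)\| \le \|Y\|$ is uniform in $k$ and therefore not summable; the rest of the proof must extract genuine geometric decay.

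The key structural point is that $Y$ is divisible to third order by $(1 - sp^{-1})$, which vanishes along exactly those spectral coordinates where $L_r R_{p^{-1}}$ can have peripheral spectrum: saturation $\|rp^{-1}\| = 1$ at a spectral coordinate forces both $\|r\|$ and $\|p^{-1}\|$ to be simultaneously saturated there, pinning the spectral value of $q$ at $c$ and making $c-q$ vanish. I would proceed by expanding $(1 - sp^{-1})^3 = 1 - 3sp^{-1} + 3s^2 p^{-2} - s^3 p^{-3}$ and applying a telescoping or Abel-summation argument across the four resulting subseries; the threefold application of the identity $(p-s) p^{-1} = 1 - sp^{-1}$ should produce the cancellation needed to extract a geometric factor $\rho^k$ with $\rho < 1$.

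The main obstacle is executing this cancellation in a general unital Banach algebra, where no direct spectral decomposition of $L_r R_{p^{-1}}$ is available. I expect the argument either proceeds by explicit combinatorial manipulation of the iterated identity, or alternatively via a spectral-radius bound applied inside the commutative subalgebra generated by $q$ and $1$ (within which $L_r$ and the polynomial functions of $p$ can be handled separately), which would then yield $\|(L_r R_{p^{-1}})^k(Y)\| \le C \rho^k$ and hence the claimed absolute convergence.
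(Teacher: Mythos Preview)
Your proposal is not a proof: the crucial step is left as an acknowledged gap (``the main obstacle is executing this cancellation\ldots''), and the expected form of the answer is wrong. You anticipate a bound $\|(L_rR_{p^{-1}})^k(Y)\|\le C\rho^k$ with $\rho<1$, but no such geometric decay is available in general. Take $\mathcal{A}=\ell^1(\mathbb{Z}_{\ge0})$ with convolution, $q=c\,\delta_1$ and $r=(1-c)\,\delta_0$. Then the powers $q^m=c^m\delta_m$ have disjoint supports, so if $(c-q)^3(1-q)^{-(k+1)}=\sum_m\lambda_m q^m$ one has
\[
\|r^k(c-q)^3(1-q)^{-(k+1)}\|_1=(1-c)^k\sum_{m\ge0}|\lambda_m|\,c^m=(1-c)^k|\phi_k|(c),
\]
where $\phi_k(x)=(c-x)^3\sum_{m\ge0}\binom{m+k}{k}x^m$. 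A local central-limit-theorem heuristic (the sequence $m\mapsto(1-c)^kc^m\binom{m+k}{k}$ has a peak of height $\sim k^{-1/2}$ and width $\sim k^{1/2}$, and multiplying by $(c-x)^3$ is a third-order finite difference) shows this quantity is of exact order $k^{-3/2}$, not $\rho^k$. Your spectral intuition is correct---the factor $(c-q)^3$ vanishes precisely where $sp^{-1}$ has peripheral spectrum---but in an $\ell^1$-type norm a third-order zero buys only polynomial, not exponential, improvement.

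The paper's argument embraces this. It bounds the general summand by exactly the quantity $(1-c)^k|\phi_k|(c)$ above, reducing the problem to a single scalar power series, and then proves $(1-c)^k|\phi_k|(c)=\mathcal{O}(k^{-3/2})$ by hand: one determines where the coefficients $\lambda_m$ change sign (the three real roots of an explicit cubic $g_k$, all located at $ck/(1-c)+\mathcal{O}(k^{1/2})$), uses these sign changes to rewrite $|\phi_k|(c)$ as a short telescoping sum, and controls each term via Stirling's formula. This is the ``explicit combinatorial manipulation'' you allude to, but the output is $\sum_k \mathcal{O}(k^{-3/2})<\infty$, and the exponent $3$ on $(c-q)$ is exactly what is needed: with $(c-q)^2$ the bound would be $\mathcal{O}(k^{-1})$, which diverges.
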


The proof of Lemma \ref{L-summable} requires some auxiliary results.

	\begin{lemma}
	\label{L-bound for combinatorial}
Let $0<c<1$. For any $x, y>0$ one has
	\begin{displaymath}
\frac{c^x (1-c)^y (x+y)^{x+y}}{x^x y^y}\le 1.
	\end{displaymath}
	\end{lemma}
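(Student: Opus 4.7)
Taking logarithms, the claimed inequality is equivalent to
\begin{equation*}
\Phi(x,y) := x\log c + y\log(1-c) + (x+y)\log(x+y) - x\log x - y\log y \le 0.
\end{equation*}
The plan is to homogenize this in the obvious way: since the logarithm transforms the $(x+y)^{x+y}/(x^xy^y)$ factor into a linear combination of $\log(x+y)$, $\log x$, $\log y$, one can pull a factor of $(x+y)$ outside. Concretely, setting $t = x/(x+y) \in (0,1)$, a direct rearrangement gives
\begin{equation*}
\Phi(x,y) = (x+y)\Bigl[t\log\tfrac{c}{t} + (1-t)\log\tfrac{1-c}{1-t}\Bigr].
\end{equation*}

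It then suffices to show that the bracketed quantity is nonpositive for every $t\in(0,1)$. This is a one-variable statement and is an immediate consequence of the concavity of $\log$ (Jensen's inequality): with weights $t$ and $1-t$ applied to the values $c/t$ and $(1-c)/(1-t)$,
\begin{equation*}
t\log\tfrac{c}{t} + (1-t)\log\tfrac{1-c}{1-t} \le \log\Bigl(t\cdot\tfrac{c}{t} + (1-t)\cdot\tfrac{1-c}{1-t}\Bigr) = \log(c+(1-c)) = 0.
\end{equation*}
Equivalently, one may invoke the Gibbs inequality for the two-point probability vectors $(t,1-t)$ and $(c,1-c)$: the relative entropy is nonnegative, which is exactly the reversed form of the bracketed expression above.

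Combining the two steps, $\Phi(x,y) \le 0$ for all $x,y>0$, and exponentiating yields the lemma. There is no real obstacle here: the only substantive idea is the homogenization $t = x/(x+y)$, after which the inequality becomes a textbook application of Jensen.
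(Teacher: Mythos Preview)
Your proof is correct, but it takes a different route from the paper's. The paper fixes $y>0$, treats $\phi(x)=\Phi(x,y)$ as a function of $x$ alone, computes $\phi'(x)=\log\frac{c(x+y)}{x}$, and reads off that the unique critical point $x=\frac{cy}{1-c}$ is a maximum with $\phi\bigl(\frac{cy}{1-c}\bigr)=0$. You instead homogenize via $t=x/(x+y)$, reducing the two-variable inequality to the one-variable statement $t\log\frac{c}{t}+(1-t)\log\frac{1-c}{1-t}\le 0$, which is Jensen's inequality for $\log$ (equivalently, nonnegativity of the relative entropy between $(t,1-t)$ and $(c,1-c)$). Your approach is more conceptual and identifies the inequality as a standard information-theoretic fact; the paper's calculus argument is more self-contained but slightly obscures why the maximum value is exactly zero. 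Both are equally short, and your homogenization check $\Phi(x,y)=(x+y)\bigl[t\log\frac{c}{t}+(1-t)\log\frac{1-c}{1-t}\bigr]$ is straightforward to verify.
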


	\begin{proof}
Fix $y>0$ and put
	\begin{displaymath}
\phi (x)=\log \frac{c^x (1-c)^y (x+y)^{x+y}}{x^x y^y}=x\log c+y\log (1-c)+(x+y)\log (x+y)-x\log x-y\log y
	\end{displaymath}
for $x>0$. Then
	\begin{displaymath}
\phi '(x)=\log c+\log (x+y)-\log x=\log \frac{c(x+y)}{x}.
	\end{displaymath}
Clearly $\phi '>0$ on the interval $(0, \frac{cy}{1-c})$, $\phi '=0$ at $\frac{cy}{1-c}$, and $\phi '<0$ on $(\frac{cy}{1-c}, \infty)$. Thus $\phi $ attains its maximum value at $\frac{cy}{1-c}$. Since $\phi (\frac{cy}{1-c})=0$, one concludes that $\phi (x)\le 0$ for all $x>0$.
	\end{proof}

	\begin{lemma}
	\label{L-polynomial}
Let $0<c<1$ and $k\in \mathbb{N}$. Let $g_k$ be the cubic polynomial given by
	\begin{align}
	\label{E-poly}
g_k(x)& =-(x+1)(x+2)(x+3)+3c(x+k+1)(x+2)(x+3)
	\\
\nonumber &\quad -3c^2(x+k+1)(x+k+2)(x+3)+c^3(x+k+1)(x+k+2)(x+k+3)
	\\
\nonumber &=(c-1)^3x^3+(3c(c-1)^2k+6(c-1)^3)x^2
	\\
\nonumber &\quad +(3c^2(c-1)k^2+3c(c-1)(4c-5)k+11(c-1)^3)x
	\\
\nonumber &\quad +c^3k^3+(6c^3-9c^2)k^2+(11c^3-27c^2+18c)k+6(c-1)^3.
	\end{align}
For $\eta>0$, put
	\begin{align}
	\label{E-vary}
y_{k, \eta, \pm}=\frac{ck}{1-c}-2\pm \frac{1}{1-c}\sqrt{\eta k+\tfrac{1}{3}(1-c)^2}.
	\end{align}
Then
	\begin{align*}
g_k(y_{k, \eta, \pm})=(c^2+c)k\pm \bigl((3c-\eta)k+\tfrac{2}{3}(c-1)^2\bigr)\sqrt{\eta k+\tfrac{1}{3}(1-c)^2}.
	\end{align*}
	\end{lemma}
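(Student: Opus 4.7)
The plan is to Taylor-expand $g_k$ about the midpoint $s:=\frac{ck}{1-c}-2$ of the two values $y_{k,\eta,\pm}$. Setting $t:=\sqrt{\eta k+\tfrac13(1-c)^2}$ so that $y_{k,\eta,\pm}=s\pm\tfrac{t}{1-c}$, and using that $g_k$ is cubic, the Taylor formula terminates at order three and regroups by parity in $\pm$:
\begin{align*}
g_k(y_{k,\eta,\pm})=\Bigl[g_k(s)+\tfrac{g_k''(s)}{2(1-c)^2}\,t^2\Bigr]\pm t\Bigl[\tfrac{g_k'(s)}{1-c}+\tfrac{g_k'''(s)}{6(1-c)^3}\,t^2\Bigr].
\end{align*}
The leading coefficient of $g_k$ gives $g_k'''(s)=6(c-1)^3$, whence $\tfrac{g_k'''(s)}{6(1-c)^3}=-1$. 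Substituting $t^2=\eta k+\tfrac13(1-c)^2$ and comparing with the asserted right-hand side $(c^2+c)k\pm\bigl((3c-\eta)k+\tfrac23(c-1)^2\bigr)t$, the claim reduces (by matching the even and odd parts in $\pm t$, and then matching coefficients of $\eta$) to three scalar identities at $x=s$:
\begin{align*}
\textup{(A)}\ g_k''(s)=0,\qquad \textup{(B)}\ g_k(s)=(c^2+c)k,\qquad \textup{(C)}\ g_k'(s)=3c(1-c)k+(1-c)^3.
\end{align*}

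Identity (A) is immediate from the expanded form, which gives $g_k''(x)=6(c-1)^2[(c-1)x+ck+2(c-1)]$, while $(c-1)s=-ck-2(c-1)$ by the very definition of $s$. For (B) I would use the \emph{factored} form, exploiting $s+2=\tfrac{ck}{1-c}$ and $s+k+2=\tfrac{k}{1-c}$ together with the difference-of-squares identities $(s+1)(s+3)=(s+2)^2-1$ and $(s+k+1)(s+k+3)=(s+k+2)^2-1$. The key observation is that the two middle summands $3c(s+k+1)(s+2)(s+3)$ and $-3c^2(s+k+1)(s+k+2)(s+3)$ both simplify to $\pm\frac{3c^2k(k+c-1)(ck+1-c)}{(1-c)^3}$ and therefore cancel exactly, leaving $\frac{k}{(1-c)^3}\bigl[-c(c^2k^2-(1-c)^2)+c^3(k^2-(1-c)^2)\bigr]$, which collapses via $c-c^3=c(1-c)(1+c)$ to $(c^2+c)k$.

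The main obstacle is identity (C), since differentiating the factored form at $s$ produces twelve length-two product terms without the same outright cancellation. My first plan is to apply the product rule to each of the four summands and pair the contributions using the binomial ratio $-1:3c:-3c^2:c^3$; a more economical alternative is to observe from the expanded form that $g_k'(s)$ is polynomial in $k$ of degree at most two, so (C) is verified by computing $g_k'(s)$ at three specific values of $k$ (say $k=0,1,2$) and matching against $3c(1-c)k+(1-c)^3$. Once (A), (B), and (C) are established, substituting them back into the Taylor-expanded formula gives the claimed expression for $g_k(y_{k,\eta,\pm})$.
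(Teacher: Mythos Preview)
Your proposal is correct and takes a genuinely different route from the paper. The paper simply substitutes $y_{k,\eta,\pm}$ into the expanded form of $g_k$ and grinds through a long direct simplification. Your Taylor expansion about the midpoint $s=\frac{ck}{1-c}-2$ is more structural: it explains \emph{why} the answer has the shape it does. Because $g_k''(s)=0$, the even part of $g_k(s\pm\tfrac{t}{1-c})$ is just $g_k(s)$ and hence independent of $\eta$; because the leading coefficient forces $g_k'''(s)/6(1-c)^3=-1$, the $-\eta k$ in the odd coefficient drops straight out of $-t^2$. Your verification of (B) via the factored form, using $s+2=\tfrac{ck}{1-c}$ and $s+k+2=\tfrac{k}{1-c}$ so that the two middle summands match up to sign and cancel outright, is considerably cleaner than anything in the paper's calculation.

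One small remark on (C), which you flag as the main obstacle: from the expanded form it is actually a short direct computation and neither of your two proposed detours is needed. Using $(c-1)s=-ck-2(c-1)$, one has $3(c-1)^3s^2=3(c-1)(ck+2(c-1))^2$ and $2(3c(c-1)^2k+6(c-1)^3)s=(6c(c-1)k+12(c-1)^2)(-ck-2(c-1))$; summing with the linear coefficient of $g_k$, the $k^2$-terms cancel, the $k$-coefficient collapses to $3c(1-c)$, and the constant to $(1-c)^3$. This is quicker than either the twelve-term product-rule expansion or the three-point interpolation you sketch.
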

	\begin{proof}
This is a direct computation:
	\begin{align*}
g_k(y_{k, \eta, \pm})&=(c-1)^3y_{k, \eta, \pm}^3+(3c(c-1)^2k+6(c-1)^3)y_{k, \eta, \pm}^2
	\\
&\quad +(3c^2(c-1)k^2+3c(c-1)(4c-5)k+11(c-1)^3)y_{k, \eta, \pm}
	\\
&\quad +c^3k^3+(6c^3-9c^2)k^2+(11c^3-27c^2+18c)k+6(c-1)^3
	\\
&= (c-1)^3(\tfrac{ck}{1-c}-2)((\tfrac{ck}{1-c}-2)^2+3\tfrac{1}{(1-c)^2}(\eta k+\tfrac{1}{3}(1-c)^2))
	\\
&\quad +(3c(c-1)^2k+6(c-1)^3)((\tfrac{ck}{1-c}-2)^2+\tfrac{1}{(1-c)^2}(\eta k+\tfrac{1}{3}(1-c)^2))
	\\
&\quad +(3c^2(c-1)k^2+3c(c-1)(4c-5)k+11(c-1)^3)(\tfrac{ck}{1-c}-2)
	\\
&\quad +c^3k^3+(6c^3-9c^2)k^2+(11c^3-27c^2+18c)k+6(c-1)^3
	\\
&\quad \pm (c-1)^3(\tfrac{ck}{1-c}-2)^2 \tfrac{3}{1-c}\sqrt{\eta k+\tfrac{1}{3}(1-c)^2}
	\\
&\quad \pm (c-1)^3\tfrac{1}{(1-c)^3}(\eta k+\tfrac{1}{3}(1-c)^2)\sqrt{\eta k+\tfrac{1}{3}(1-c)^2}
	\\
&\quad \pm 2(3c(c-1)^2k+6(c-1)^3)(\tfrac{ck}{1-c}-2)\tfrac{1}{1-c}\sqrt{\eta k+\tfrac{1}{3}(1-c)^2}
	\\
&\quad \pm (3c^2(c-1)k^2+3c(c-1)(4c-5)k+11(c-1)^3)\tfrac{1}{1-c}\sqrt{\eta k+\tfrac{1}{3}(1-c)^2}
	\\
&= -(ck+2(c-1))((ck+2(c-1))^2+(3\eta k+(1-c)^2))
	\\
&\quad +(ck+2(c-1))(3(ck+2(c-1))^2+(3\eta k+(1-c)^2))
	\\
&\quad -(3c^2k^2+3c(4c-5)k+11(c-1)^2)(ck+2(c-1))
	\\
&\quad +c^3k^3+(6c^3-9c^2)k^2+(11c^3-27c^2+18c)k+6(c-1)^3
	\\
&\quad \mp 3(ck+2(c-1))^2\sqrt{\eta k+\tfrac{1}{3}(1-c)^2}
	\\
&\quad \mp (\eta k+\tfrac{1}{3}(1-c)^2)\sqrt{\eta k+\tfrac{1}{3}(1-c)^2}
	\\
&\quad \pm 6(ck+2(c-1))^2\sqrt{\eta k+\tfrac{1}{3}(1-c)^2}
	\\
&\quad \mp (3c^2k^2+3c(4c-5)k+11(c-1)^2)\sqrt{\eta k+\tfrac{1}{3}(1-c)^2}
	\\
&= (c^2+c)k\pm ((3c-\eta)k+\tfrac{2}{3}(c-1)^2)\sqrt{\eta k+\tfrac{1}{3}(1-c)^2}.\tag*{\qedsymbol}
	\end{align*}
\renewcommand{\qedsymbol}{}\vspace{-\baselineskip}
	\end{proof}

	\begin{lemma}
	\label{L-root}
Fix $0<c<1$. Then there is some $k_c\in \mathbb{N}$ such that for each $k\in \mathbb{N}$ with $k\ge k_c$ the following hold:
	\begin{enumerate}
	\item
the polynomial $g_k$ given by \eqref{E-poly} has $3$ roots
$t_{k, 1}<t_{k, 2}<t_{k, 3}$ such that
	\begin{displaymath}
1<y_{k, 4c, -}<t_{k, 1}<y_{k, c, -}<t_{k, 2}<y_{k, c, +}<t_{k, 3}<y_{k, 4c, +},
	\end{displaymath}
where $y_{k, \eta, \pm}$ is given in \eqref{E-vary};
	\item
$g_k>0$ on $(-\infty, t_{k, 1})\cup (t_{k, 2}, t_{k, 3})$ and $g_k<0$ on $(t_{k, 1}, t_{k, 2})\cup (t_{k, 3}, \infty)$.
	\end{enumerate}
	\end{lemma}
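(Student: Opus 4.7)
My plan is to use Lemma~\ref{L-polynomial} to evaluate $g_k$ at the four sample points $y_{k,4c,-}, y_{k,c,-}, y_{k,c,+}, y_{k,4c,+}$, then extract the sign pattern by analyzing the dominant asymptotics in $k$, and finally invoke the intermediate value theorem together with the fact that $g_k$ is a cubic.

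First, I would observe that since $0<c<1$ the leading coefficient $(c-1)^3$ of $g_k$ is strictly negative, so $g_k(x)\to+\infty$ as $x\to-\infty$ and $g_k(x)\to-\infty$ as $x\to+\infty$. Second, applying Lemma~\ref{L-polynomial} with $\eta=c$ yields
\[
g_k(y_{k,c,\pm}) = (c^2+c)k \pm \bigl(2ck+\tfrac{2}{3}(c-1)^2\bigr)\sqrt{ck+\tfrac{1}{3}(1-c)^2},
\]
whose $\pm$ term grows like $\pm 2c^{3/2}k^{3/2}$, dominating the $O(k)$ contribution; with $\eta=4c$ the formula gives
\[
g_k(y_{k,4c,\pm}) = (c^2+c)k \pm \bigl(-ck+\tfrac{2}{3}(c-1)^2\bigr)\sqrt{4ck+\tfrac{1}{3}(1-c)^2},
\]
whose $\pm$ term grows like $\mp 2c^{3/2}k^{3/2}$. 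Hence for all sufficiently large $k$,
\[
g_k(y_{k,4c,-})>0,\quad g_k(y_{k,c,-})<0,\quad g_k(y_{k,c,+})>0,\quad g_k(y_{k,4c,+})<0.
\]

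Third, the inequality $\sqrt{4ck+\tfrac13(1-c)^2}>\sqrt{ck+\tfrac13(1-c)^2}$ makes the ordering
\[
y_{k,4c,-}<y_{k,c,-}<y_{k,c,+}<y_{k,4c,+}
\]
transparent from definition~\eqref{E-vary}, and the lower bound $y_{k,4c,-}>1$ holds for large $k$ because $y_{k,4c,-}=\tfrac{ck}{1-c}-2-O(\sqrt{k})\to+\infty$. The intermediate value theorem then produces three distinct real roots $t_{k,1}\in(y_{k,4c,-},y_{k,c,-})$, $t_{k,2}\in(y_{k,c,-},y_{k,c,+})$, $t_{k,3}\in(y_{k,c,+},y_{k,4c,+})$, and since $g_k$ is cubic these are all of its real roots. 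This gives~(1).

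For~(2) I would simply note that a cubic with negative leading coefficient and three distinct real roots $t_{k,1}<t_{k,2}<t_{k,3}$ must have the sign pattern $+,-,+,-$ on the four intervals determined by the roots, which is exactly the claimed statement. The only real obstacle is choosing $k_c$ large enough to guarantee that the $\pm 2c^{3/2}k^{3/2}$ leading term dominates the $O(k)$ and $O(\sqrt{k})$ remainders at all four sample points and simultaneously forces $y_{k,4c,-}>1$; this is a routine but unavoidable bookkeeping of the explicit constants coming from Lemma~\ref{L-polynomial}.
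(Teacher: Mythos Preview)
Your proposal is correct and follows essentially the same route as the paper's proof: the paper likewise applies Lemma~\ref{L-polynomial} at the four test points $y_{k,4c,\pm}$ and $y_{k,c,\pm}$, reads off exactly the sign pattern $+,-,+,-$ from the explicit formulas for large $k$, checks the ordering $1<y_{k,4c,-}<y_{k,c,-}<y_{k,c,+}<y_{k,4c,+}$, and then invokes the fact that $g_k$ is cubic to locate the three roots and deduce the sign chart in~(2). The only cosmetic difference is that the paper writes out the four explicit inequalities defining $k_c$ rather than appealing to the $k^{3/2}$-versus-$k$ dominance you describe.
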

	\begin{proof}
Take $k_c\in \mathbb{N}$ such that for each $k\ge k_c$ one has
	\begin{displaymath}
\tfrac{ck}{1-c}-2-\tfrac{1}{1-c}\sqrt{4ck+\tfrac{1}{3}(1-c)^2}>1,
	\end{displaymath}
and
	\begin{displaymath}
(c^2+c)k-(-ck+\tfrac{2}{3}(c-1)^2)\sqrt{4c k+\tfrac{1}{3}(1-c)^2}>0,
	\end{displaymath}
and
	\begin{displaymath}
(c^2+c)k-(2ck+\tfrac{2}{3}(c-1)^2)\sqrt{c k+\tfrac{1}{3}(1-c)^2}<0,
	\end{displaymath}
and
	\begin{displaymath}
(c^2+c)k+ (-ck+\tfrac{2}{3}(c-1)^2)\sqrt{4c k+\tfrac{1}{3}(1-c)^2}<0.
	\end{displaymath}
Then for each $k\in \mathbb{N}$ with $k\ge k_c$, one has $y_{k, 4c, -}>1$ and by Lemma~\ref{L-polynomial} one has $g_k(y_{k, 4c, -})>0$, $g_k(y_{k, c, -})<0$, $g_k(y_{k, c, +})>0$, and
$g_k(y_{k, 4c, +})<0$. Since $y_{k, 4c, -}<y_{k, c, -}<y_{k, c, +}<y_{k, 4c, +}$, it follows that (1) holds. As $g_k$ is a cubic polynomial, (2) must also hold.
	\end{proof}

	\begin{lemma}
	\label{L-asymp}
Fix $0<c<1$.
For $k\in \mathbb{N}$ set
	\begin{align}
	\label{E-asymp1}
f_k(m)=c^{m}\tbinom{m+k}{k}-2c^{m+1}\tbinom{m+k+1}{k}+c^{m+2}\tbinom{m+k+2}{k}
	\end{align}
and
	\begin{align}
	\label{E-asymp}
h_k(m)=(1-c)^kf_k(m)
	\end{align}
for $m\in \mathbb{Z}_{\ge 0}$.
Let $\xi: \mathbb{N}\rightarrow \mathbb{N}$ such that $\xi(k)=\frac{ck}{1-c}+\mathcal{O}(k^{1/2})$ as $k\to \infty$. Then
	\begin{align*}
h_k(\xi(k))=\mathcal{O}(k^{-3/2})
	\end{align*}
as $k\to \infty$.
	\end{lemma}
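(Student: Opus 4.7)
The plan is to derive an exact algebraic identity that expresses $h_k(m)$ as a quotient whose asymptotic order can be read off directly, and then combine it with a uniform Stirling bound. Write $a_k(m) = (1-c)^k c^m \binom{m+k}{k}$, so that $h_k(m) = a_k(m) - 2a_k(m+1) + a_k(m+2)$ is the second forward difference of $a_k$. Using the ratios $a_k(m+1)/a_k(m) = c(m+k+1)/(m+1)$ and $a_k(m+2)/a_k(m) = c^2(m+k+1)(m+k+2)/[(m+1)(m+2)]$, one obtains
\begin{displaymath}
(m+1)(m+2)\,h_k(m) = a_k(m)\, \tilde g_k(m),
\end{displaymath}
where $\tilde g_k(m) \coloneqq (m+1)(m+2) - 2c(m+k+1)(m+2) + c^2(m+k+1)(m+k+2)$.

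Next I would simplify $\tilde g_k$. Setting $\mu_k \coloneqq ck/(1-c)$ and $m = \mu_k + t$, a direct expansion (conveniently carried out by completing the square $\tilde g_k = [(m+1)-c(m+k+1)]^2 + (m+1) - c(2-c)(m+k+1)$ and then using $(1-c)\mu_k = ck$) yields
\begin{displaymath}
\tilde g_k(\mu_k + t) = (1-c)^2(t+1)(t+2) - ck.
\end{displaymath}
The hypothesis $\xi(k) = \mu_k + \mathcal{O}(k^{1/2})$ gives $t_k \coloneqq \xi(k) - \mu_k = \mathcal{O}(k^{1/2})$, so both $(1-c)^2(t_k+1)(t_k+2)$ and $ck$ are $\mathcal{O}(k)$, and hence $\tilde g_k(\xi(k)) = \mathcal{O}(k)$.

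For the last ingredient, Stirling's formula applied to $\binom{m+k}{k}$ shows that $a_k$ attains its maximum near $m \approx \mu_k$ with value of order $(2\pi ck)^{-1/2}$; in particular $\sup_m a_k(m) = \mathcal{O}(k^{-1/2})$ uniformly, so $a_k(\xi(k)) = \mathcal{O}(k^{-1/2})$. Since $\xi(k) = \Theta(k)$, we have $(\xi(k)+1)(\xi(k)+2) = \Theta(k^2)$, and combining the three estimates gives
\begin{displaymath}
|h_k(\xi(k))| = \frac{a_k(\xi(k))\,|\tilde g_k(\xi(k))|}{(\xi(k)+1)(\xi(k)+2)} = \frac{\mathcal{O}(k^{-1/2}) \cdot \mathcal{O}(k)}{\Theta(k^2)} = \mathcal{O}(k^{-3/2}),
\end{displaymath}
as claimed.

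The crux of the argument, and the step I expect to be the main obstacle, is the identity in the second paragraph: the two competing $\mathcal{O}(k^2)$ leading terms in $\tilde g_k(\mu_k+t)$ cancel exactly, leaving only the $\mathcal{O}(k)$ remainder. This cancellation reflects the fact that $\mu_k$ is (up to $\mathcal{O}(1)$) the mean of the negative binomial distribution whose probability mass function is proportional to $a_k$, and it is precisely what improves the naive estimate $\mathcal{O}(k^{-1/2})$ (the peak value of $a_k$) to the required sharper bound $\mathcal{O}(k^{-3/2})$.
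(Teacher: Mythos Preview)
Your proof is correct and follows essentially the same strategy as the paper: both factor $h_k(m)=\varphi_k(m)\psi_k(m)$ with $\varphi_k(m)=(1-c)^kc^m\binom{m+k}{k}$ (your $a_k$) and $\psi_k(m)=\tilde g_k(m)/[(m+1)(m+2)]$, bound $\varphi_k(\xi(k))=\mathcal{O}(k^{-1/2})$ via Stirling, and show the numerator $\tilde g_k(\xi(k))=\mathcal{O}(k)$ by exhibiting the cancellation at $m\approx ck/(1-c)$. Your exact identity $\tilde g_k(\mu_k+t)=(1-c)^2(t+1)(t+2)-ck$ is a cleaner way to see this cancellation than the paper's term-by-term asymptotic expansion, but the overall architecture is the same.
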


	\begin{proof}
For $k\in \mathbb{N}$ define $\varphi_k, \psi_k: \mathbb{N}\rightarrow \mathbb{R}$ by
	\begin{displaymath}
\varphi_k(m)=(1-c)^kc^m\tbinom{m+k}{k}
	\end{displaymath}
and
	\begin{displaymath}
\psi_k(m)=\tfrac{(m+1)(m+2)-2c(m+k+1)(m+2)+c^2(m+k+1)(m+k+2)}{(m+1)(m+2)}.
	\end{displaymath}

For each $m\in \mathbb{N}$, one has
	\begin{displaymath}
f_k(m)=c^m\tbinom{m+k}{k}\bigl(1-2c\tfrac{m+k+1}{m+1}+ c^2\tfrac{(m+k+1)(m+k+2)}{(m+1)(m+2)}\bigr) =c^m\tbinom{m+k}{k}\psi_k(m),
	\end{displaymath}
whence
	\begin{displaymath}
h_k(m)=\varphi_k(m)\psi_k(m).
	\end{displaymath}
Therefore it suffices to show that $\varphi_k(\xi(k))=\mathcal{O}(k^{-1/2})$ and $\psi_k(\xi(k))=\mathcal{O}(k^{-1})$.

By Stirling's approximation formula there are constants $C_1, C_2>0$ such that
	\begin{displaymath}
C_1\sqrt{2\pi n}(\tfrac{n}{e})^n\le n!\le C_2\sqrt{2\pi n}(\tfrac{n}{e})^n
	\end{displaymath}
for all $n\in \mathbb{N}$. Then
	\begin{displaymath}
\varphi_k(m)=(1-c)^kc^{m}\tfrac{(m+k)!}{m!k!}\le (1-c)^kc^{m}\tfrac{C_2\sqrt{2\pi (m+k)}(m+k)^{m+k}}{C_1^2 2\pi\sqrt{mk}m^{m}k^k} \le \tfrac{C_2\sqrt{2\pi (m+k)}}{C_1^2 2\pi\sqrt{mk}},
	\end{displaymath}
where the 2nd inequality comes from taking $x=m$ and $y=k$ in Lemma~\ref{L-bound for combinatorial}.
Thus
	\begin{displaymath}
\varphi_k(\xi(k))\le \tfrac{C_2\sqrt{2\pi (\xi(k)+k)}}{C_1^2 2\pi\sqrt{\xi(k) k}}=\mathcal{O}(k^{-1/2}),
	\end{displaymath}
whence $\varphi_k(\xi(k))=\mathcal{O}(k^{-1/2})$.

\smallskip Write $\xi(k)$ as $\frac{ck}{1-c}+\lambda_k k^{1/2}$ with $\lambda_k=\mathcal{O}(1)$ as $k\to \infty$. Then
	\begin{align*}
(\xi(k)+1)(\xi(k)+2)&=\tfrac{c^2k^2}{(1-c)^2}+ \tfrac{2c\lambda_k}{1-c}k^{3/2}+\mathcal{O}(k),
	\\
2c(\xi(k)+k+1)(\xi(k)+2)&=\tfrac{2c^2k^2}{(1-c)^2} +\tfrac{2c(1+c)\lambda_k}{1-c}k^{3/2}+\mathcal{O}(k),
	\\
c^2(\xi(k)+k+1)(\xi(k)+k+2)&=\tfrac{c^2k^2}{(1-c)^2}+\tfrac{2c^2\lambda_k}{1-c}k^{3/2}+\mathcal{O}(k),
	\end{align*}
whence
	\begin{displaymath}
(\xi(k)+1)(\xi(k)+2)-2c(\xi(k)+k+1)(\xi(k)+2)+c^2(\xi(k)+k+1)(\xi(k)+k+2)=\mathcal{O}(k).
	\end{displaymath}
It follows that $\psi_k(\xi(k))=\mathcal{O}(k^{-1})$.
	\end{proof}

\medskip For a power series $\phi(x)=\sum_{m=0}^\infty a_m x^m\in \mathbb{C}[[x]]$, we set $|\phi|$ to be the power series
	\begin{displaymath}
|\phi|(x)=\sum_{m=0}^\infty |a_m|x^m.
	\end{displaymath}

	\begin{lemma}
	\label{L-summable1}
Fix $0<c<1$. For each $k\in \mathbb{Z}_{\ge 0}$ let $\phi_k$ be the power series given by
	\begin{align}
	\label{E-power series}
\phi_k(x)=(c-x)^3\sum_{m=0}^\infty \tbinom{m+k}{k}x^m.
	\end{align}
Then $ (1-c)^k|\phi_k|(c)=\mathcal{O}(k^{-3/2})$ as $k\to \infty$.
	\end{lemma}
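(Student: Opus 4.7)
The plan is to determine the sign pattern of the Taylor coefficients of $\phi_k$ via the cubic $g_k$ of Lemma~\ref{L-polynomial}, telescope the resulting partial sums into the quantity $f_k$ of Lemma~\ref{L-asymp}, and then estimate the three relevant values of $f_k$ via Lemma~\ref{L-asymp} after locating them by means of Lemma~\ref{L-root}.

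First I would write $a_m$ for the coefficient of $x^m$ in $\phi_k$. Expanding $(c-x)^3 = c^3 - 3c^2 x + 3cx^2 - x^3$ against $\sum_n \tbinom{n+k}{k} x^n = (1-x)^{-(k+1)}$ gives, for $m \ge 3$,
\[
a_m = c^3\tbinom{m+k}{k} - 3c^2\tbinom{m-1+k}{k} + 3c\tbinom{m-2+k}{k} - \tbinom{m-3+k}{k}.
\]
A short manipulation of the ratio $\tbinom{m+k}{k}/\tbinom{m-3+k}{k}=(m+k)(m+k-1)(m+k-2)/[m(m-1)(m-2)]$ then yields
\[
a_m = \frac{\tbinom{m-3+k}{k}}{m(m-1)(m-2)}\, g_k(m-3) \qquad (m \ge 3),
\]
with $g_k$ the cubic of Lemma~\ref{L-polynomial}. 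The prefactor is positive, so for $k \ge k_c$ Lemma~\ref{L-root} pins down the sign pattern of $(a_m)$ as $+,-,+,-$ on four consecutive blocks of indices separated by $M_i := \lfloor t_{k,i} \rfloor + 3$, $i = 1, 2, 3$; a direct computation shows $a_0, a_1, a_2 > 0$ for large $k$, so these sit safely in the first block, and the series $\sum_m |a_m| c^m$ converges.

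Next, let $A_j := \sum_{n=0}^j \tbinom{n+k}{k} c^n$ and $S_j := \sum_{m=0}^j a_m c^m$. Collecting the four shifts coming from $(c-x)^3$ yields $S_j = c^3(A_j - 3A_{j-1} + 3A_{j-2} - A_{j-3})$, i.e., $c^3$ times the third backward difference of $A_j$. Since $\Delta A_j = \tbinom{j+k}{k} c^j$, this collapses to
\[
S_j = c^3 \bigl(c^j\tbinom{j+k}{k} - 2c^{j-1}\tbinom{j-1+k}{k} + c^{j-2}\tbinom{j-2+k}{k}\bigr) = c^3\, f_k(j-2),
\]
with $f_k$ as in Lemma~\ref{L-asymp}. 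Because $\phi_k$ contains the factor $(c-x)^3$, $\phi_k(c) = S_\infty = 0$, and the sign-block structure collapses the alternating sum into
\[
|\phi_k|(c) = 2 \bigl(S_{M_1} - S_{M_2} + S_{M_3}\bigr) = 2 c^3 \sum_{i=1}^{3} (-1)^{i+1} f_k(M_i - 2).
\]

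To finish, Lemma~\ref{L-root} confines each $t_{k,i}$ to $(y_{k, 4c, -}, y_{k, 4c, +})$, so $M_i - 2 = \tfrac{ck}{1-c} + \mathcal{O}(k^{1/2})$; Lemma~\ref{L-asymp} applied with $\xi(k) = M_i - 2$ then gives $(1-c)^k f_k(M_i - 2) = h_k(M_i - 2) = \mathcal{O}(k^{-3/2})$ for each $i$, whence $(1-c)^k |\phi_k|(c) = \mathcal{O}(k^{-3/2})$, the finitely many $k < k_c$ contributing only a bounded constant that is absorbed into the $\mathcal{O}$. The main obstacle I expect is the telescoping identity $S_j = c^3 f_k(j-2)$: it is the algebraic bridge between the sign structure of $\phi_k$ (governed by $g_k$) and the growth asymptotic that Lemma~\ref{L-asymp} is calibrated to measure. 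Once it is in hand, Lemmas~\ref{L-root} and~\ref{L-asymp} essentially do the rest.
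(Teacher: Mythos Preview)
Your proposal is correct and follows essentially the same route as the paper: identify the sign pattern of the coefficients via the cubic $g_k$, use $\phi_k(c)=0$ to reduce $|\phi_k|(c)$ to an alternating sum of three partial sums, rewrite each partial sum as $c^3 f_k(\cdot)$, and then invoke Lemmas~\ref{L-root} and~\ref{L-asymp}. Your telescoping identity $S_j=c^3 f_k(j-2)$ via the third backward difference of $A_j$ is a cleaner packaging of exactly the computation the paper carries out by expanding $(c-x)^3\sum_{0\le m\le m_{k,i}}\tbinom{m+k}{k}x^m$ and evaluating at $x=c$; the resulting arguments $M_i-2=\lfloor t_{k,i}\rfloor+1$ agree with the paper's $m_{k,1}+1,\,m_{k,2},\,m_{k,3}+1$ up to the harmless boundary case where some $t_{k,i}$ is an integer.
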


	\begin{proof}
Let $k\in \mathbb{Z}_{\ge 0}$.
For each $m\in \mathbb{Z}_{\ge 0}$, put
	\begin{align*}
b_{k, m}&=-\tbinom{m+k}{k}+3c\tbinom{m+k+1}{k}-3c^2\tbinom{m+k+2}{k}+c^3\tbinom{m+k+3}{k}
	\\
&=\tbinom{m+k}{k}\bigl(-1+\tfrac{3c(m+k+1)}{m+1}-\tfrac{3c^2(m+k+1)(m+k+2)}{(m+1)(m+2)}+\tfrac{c^3(m+k+1)(m+k+2)(m+k+3)}{(m+1)(m+2)(m+3)}\bigr)
	\\
&=\tbinom{m+k}{k}\tfrac{g_k(m)}{(m+1)(m+2)(m+3)},
	\end{align*}
where $g_k$ is given by \eqref{E-poly}. Then
	\begin{equation}
	\label{E-expansion2}
\phi_k(x)=c^3+ x(c^3(k+1)-3c^2)+x^2\bigl(c^3\tfrac{(k+1)(k+2)}{2}-3c^2(k+1)+3c\bigr)+\sum_{m=0}^\infty x^{m+3}b_{k, m}.
	\end{equation}

Let $k_c\in \mathbb{N}$ be given by Lemma~\ref{L-root}.
Let $k\in \mathbb{N}$ with $k\ge k_c$. Then $g_k$ has the roots $t_{k, i}$ for $i=1,2, 3$ described in Lemma~\ref{L-root}. Put $m_{k, i}=\lfloor t_{k, i}\rfloor$ for $i=1, 3$, and $m_{k, 2}=\lceil t_{k, 2}\rceil$.
Then $b_{k, m}\ge 0$ exactly when $0\le m\le m_{k, 1}$ or $m_{k, 2}\le m\le m_{k, 3}$.
Increasing $k_c$ if necessary, we may assume that $c^3(k+1)-3c^2, c^3\frac{(k+1)(k+2)}{2}-3c^2(k+1)+3c>0$.
Then
	\begin{align*}
|\phi_k|(x)&=|c^3|+ x|c^3(k+1)-3c^2|+x^2\big|c^3\tfrac{(k+1)(k+2)}{2}-3c^2(k+1)+3c\big| +\sum_{m=0}^\infty x^{m+3}|b_{k, m}|
	\\
&=c^3+ x(c^3(k+1)-3c^2)+x^2\big(c^3\tfrac{(k+1)(k+2)}{2}-3c^2(k+1)+3c\big)
	\\
&\quad +\sum_{0\le m\le m_{k, 1} \textup{ \scriptsize or } m_{k, 2}\le m\le m_{k, 3}}x^{m+3}b_{k, m}-\sum_{m_{k, 1}<m<m_{k, 2} \textup{ \scriptsize or } m>m_{k, 3}} x^{m+3}b_{k, m}.
	\end{align*}
Note that $\phi_k$ converges absolutely on the open interval $(-1, 1)$. Taking $x=c$ in \eqref{E-expansion2} we get
	\begin{align*}
0=c^3+c(c^3(k+1)-3c^2)+c^2\big(c^3\tfrac{(k+1)(k+2)}{2}-3c^2(k+1)+3c\big)+\sum_{m=0}^\infty c^{m+3}b_{k, m}.
	\end{align*}
Thus
	\begin{align}
	\label{E-cancel13}
|\phi_k|(c)& =2\Big(c^3+c(c^3(k+1)-3c^2)+c^2\big(c^3\tfrac{(k+1)(k+2)}{2}-3c^2(k+1)+3c\big)
	\\
\nonumber &\quad +\sum_{0\le m\le m_{k, 1} \mbox{ \scriptsize or } m_{k, 2}\le m\le m_{k, 3}}c^{m+3}b_{k, m}\Big).
	\end{align}
Similar to \eqref{E-expansion2}, we have
	\begin{align*}
(c-x)^3&\sum_{0\le m\le m_{k, 1}}x^m\tbinom{m+k}{k}
=c^3+x(c^3(k+1)-3c^2)+x^2\big(c^3\tfrac{(k+1)(k+2)}{2}-3c^2(k+1)+3c\big)
	\\
&\qquad +\sum_{0\le m\le m_{k, 1}} x^{m+3}b_{k, m}-(c^3-3c^2x+3cx^2)x^{m_{k, 1}+1}\tbinom{m_{k, 1}+k+1}{k}
	\\
&\qquad -(c^3-3c^2x)x^{m_{k, 1}+2}\tbinom{m_{k, 1}+k+2}{k}-c^3x^{m_{k, 1}+3}\tbinom{m_{k, 1}+k+3}{k},
	\end{align*}
and
	\begin{align*}
(c-x)^3&\sum_{m_{k, 2}\le m\le m_{k, 3}}x^m\tbinom{m+k}{k}=(c^3-3c^2x+3cx^2)x^{m_{k, 2}}\tbinom{m_{k, 2}+k}{k}
	\\
&\qquad +(c^3-3c^2x)x^{m_{k, 2}+1}\tbinom{m_{k, 2}+k+1}{k} +c^3x^{m_{k, 2}+2}\tbinom{m_{k, 2}+k+2}{k}
	\\
&\qquad +\sum_{m_{k, 2}\le m\le m_{k, 3}} x^{m+3}b_{k, m} -(c^3-3c^2x+3cx^2)x^{m_{k, 3}+1}\tbinom{m_{k, 3}+k+1}{k}
	\\
&\qquad -(c^3-3c^2x)x^{m_{k, 3}+2}\tbinom{m_{k, 3}+k+2}{k}-c^3x^{m_{k, 3}+3}\tbinom{m_{k, 3}+k+3}{k}.
	\end{align*}
Taking $x=c$ in the two identities above, we get
	\begin{align*}
0&=c^3+c(c^3(k+1)-3c^2)+c^2\big(c^3\tfrac{(k+1)(k+2)}{2}-3c^2(k+1)+3c\big)
	\\
&\qquad +\sum_{0\le m\le m_{k, 1}} c^{m+3}b_{k, m}-c^3f_k(m_{k, 1}+1),
	\end{align*}
and
	\begin{align*}
0=c^3f_k(m_{k, 2})+
\sum_{m_{k, 2}\le m\le m_{k, 3}} c^{m+3}b_{k, m}-c^3f_k(m_{k, 3}+1),
	\end{align*}
where $f_k$ is defined in \eqref{E-asymp1}.
Then \eqref{E-cancel13} becomes
	\begin{align*}
|\phi_k|(c)=2c^3(f_k(m_{k, 1}+1)-f_k(m_{k, 2})+f_k(m_{k, 3}+1)),
	\end{align*}
whence
	\begin{displaymath}
(1-c)^k|\phi_k|(c)=2c^3(h_k(m_{k, 1}+1)-h_k(m_{k, 2})+h_k(m_{k, 3}+1)),
	\end{displaymath}
where $h_k$ is defined in \eqref{E-asymp}.
Note that the two sequences $\{y_{k, 4c, -}\}$ and $\{y_{k, 4c, +}\}$ are both $\frac{ck}{1-c}+\mathcal{O}(k^{1/2})$.
It follows from Lemma~\ref{L-root}.(1) that the three sequences $\{m_{k, 1}+1\}$, $\{m_{k, 2}\}$ and $\{m_{k, 3}+1\}$ are all $\frac{ck}{1-c}+\mathcal{O}(k^{1/2})$.
Thus from Lemma~\ref{L-asymp} we conclude that the three sequences $\{h_k(m_{k, 1}+1)\}, \{h_k(m_{k, 2})\}$ and $\{h_k(m_{k, 3}+1)\}$ are all $\mathcal{O}(k^{-3/2})$.
Therefore $(1-c)^k|\phi_k|(c)=\mathcal{O}(k^{-3/2})$.
	\end{proof}

We are ready to prove Lemma \ref{L-summable}.

	\begin{proof}[Proof of Lemma~\ref{L-summable}]
Let $k\in \mathbb{N}$. One has
	\begin{align*}
(1-q)^{-(k+1)}=\Bigl(\sum_{j=0}^\infty q^j\Bigr)^{k+1}=\sum_{m=0}^\infty q^m\sum_{\substack{j_1+\dots+j_{k+1}=m
	\\
j_1, \dots, j_{k+1}\ge 0}}1=\sum_{m=0}^\infty q^m\tbinom{m+k}{k},
	\end{align*}
whence
	\begin{displaymath}
(c-q)^3(1-q)^{-(k+1)}=\phi_k(q),
	\end{displaymath}
where $\phi_k$ is defined in \eqref{E-power series}. Write $\phi_k$ as $\sum_{m=0}^\infty \lambda_mx^m$ with $\lambda _m\in \mathbb{C}$.
Then
	\begin{align*}
\|r^k(c-q)^3(1-q)^{-(k+1)}\|&=\Big\|\sum_{m=0}^\infty \lambda_mr^k q^m\Big\|\le \sum_{m=0}^\infty |\lambda_m| \cdot \|r\|^k\|q\|^m
	\\
&\le \sum_{m=0}^\infty |\lambda_m| (1-c)^k c^m=(1-c)^k|\phi_k|(c).
	\end{align*}
Now the assertion follows from Lemma~\ref{L-summable1}.
	\end{proof}

Lemma \ref{L-summable} implies the following proposition.

	\begin{proposition}
	\label{P-multiplier}
Let $\mathcal{A}$ be a unital Banach algebra such that $\|ab\|\le \|a\|\cdot \|b\|$ for all $a, b\in \mathcal{A}$.
Let $0<c<1$ and $q, r\in \mathcal{A}$ such that $\|r\|\le 1-c$, $\|q\|\le c$ and $qr=rq$. Then there is some $a\in \mathcal{A}$ such that
	\begin{equation}
	\label{eq:a}
a(1-(q+r))=(1-(q+r))a=(c-q)^3.
	\end{equation}
	\end{proposition}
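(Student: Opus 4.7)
The goal is to construct an element $a\in\mathcal{A}$ that behaves like $(1-(q+r))^{-1}(c-q)^3$, even though $1-(q+r)$ itself may fail to be invertible. Since $\|q\|\le c<1$, the element $1-q$ has an inverse given by the Neumann series $\sum_{j\ge 0}q^j$. Using $qr=rq$ and writing $1-(q+r)=(1-q)-r$, one formally has
$$(1-(q+r))^{-1} = \sum_{k=0}^{\infty} r^k (1-q)^{-(k+1)}.$$
This series is borderline divergent: $\|r(1-q)^{-1}\|\le (1-c)\cdot \tfrac{1}{1-c}=1$, so the usual geometric bound fails. The factor $(c-q)^3$ is precisely what is needed to rescue convergence. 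Accordingly, my first step is to define
$$a := \sum_{k=0}^{\infty} r^k (c-q)^3 (1-q)^{-(k+1)},$$
which is absolutely convergent in $\mathcal{A}$ by Lemma \ref{L-summable}.

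The second step is to verify $a(1-(q+r))=(c-q)^3$ by a telescoping argument. Since $qr=rq$, the element $r$ commutes with $(c-q)^3$ and with every power of $(1-q)^{-1}$. Writing $1-(q+r)=(1-q)-r$ and multiplying the $N$-th partial sum of $a$ by $(1-(q+r))$ on the right, two adjacent terms of each summand cancel and one is left with
$$(c-q)^3 \;-\; r^{N+1}(c-q)^3(1-q)^{-(N+1)}.$$
The remainder is, up to a factor of $r$, the $(N+1)$-st summand of the absolutely convergent series defining $a$, and therefore tends to $0$ as $N\to\infty$. Passing to the limit yields $a(1-(q+r))=(c-q)^3$.

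For the other identity, I would observe that $a$ is a norm limit of polynomials in $q$, $r$, and $(1-q)^{-1}$; since $q$ and $r$ commute, all these building blocks commute pairwise, so $a$ commutes with both $q$ and $r$, hence with $1-(q+r)$. Therefore $(1-(q+r))a = a(1-(q+r)) = (c-q)^3$, establishing \eqref{eq:a}.

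The real obstacle in this proposition is the summability statement of Lemma \ref{L-summable}: controlling the borderline-divergent Neumann series by the cubic correction $(c-q)^3$ is the content of the cubic-polynomial and Stirling estimates carried out in Lemmas \ref{L-bound for combinatorial}--\ref{L-summable1}. Once that lemma is in hand, the present proposition reduces to a clean algebraic argument — define the series, telescope, and invoke commutativity — with no further analytic input.
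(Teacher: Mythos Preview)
Your proof is correct and follows essentially the same route as the paper: define $a=\sum_{k\ge 0} r^k(c-q)^3(1-q)^{-(k+1)}$, invoke Lemma~\ref{L-summable} for convergence, telescope to obtain $a(1-(q+r))=(c-q)^3$, and use commutativity of $q$ and $r$ for the other side. The only cosmetic difference is that the paper telescopes the full series directly and says ``similarly'' for the left identity, whereas you work with partial sums and argue commutativity of $a$ with $q,r$ explicitly.
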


	\begin{proof}
Formally, the element $a$ in \eqref{eq:a} is given by
	\begin{align*}
a&=(c-q)^3(1-(q+r))^{-1} = (c-q)^3((1-q)-r)^{-1}
	\\
&=\smash[b]{(c-q)^3(1-q)^{-1}(1-r(1-q)^{-1})^{-1} =(c-q)^3(1-q)^{-1}\sum_{k=0}^\infty r^k(1-q)^{-k}}
	\\
&= \sum_{k=0}^\infty r^k(c-q)^3(1-q)^{-(k+1)}.
	\end{align*}
By Lemma~\ref{L-summable}, the last series in this expression for $a$ converges in norm, so that $a\in \mathcal{A}$ is well defined.
Since $qr=rq$, one has $qa=aq$, $ra=ar$, and $(1-q)^{-1}r=r(1-q)^{-1}$.
Thus
	\begin{align*}
a(1-(q+r))&=a(1-q)-ar
	\\
&=\sum_{k=0}^\infty r^k(c-q)^3(1-q)^{-k}-\sum_{k=0}^\infty r^{k+1}(c-q)^3(1-q)^{-(k+1)}=(c-q)^3,
	\end{align*}
and similarly $(1-(q+r))a=(c-q)^3$.
	\end{proof}

	\begin{example}
	\label{E-Heisenberg}
Let $\mathbb{H}$ be the discrete Heisenberg group with canonical generators $u_1,u_2,u_3$ defined in \eqref{H-generators}, and let $f= 4-u_1-u_1^{-1}-u_2-u_2^{-1} \in \mathbb{Z}\mathbb{H}$. Then $f= 4(1-p)$, where $p=\frac{1}{4}(u_1+u_2+u_1^{-1}+u_2^{-1})\in \mathbb{Q} \mathbb{H}$ can be viewed as a symmetric probability measure on $\mathbb{H}$. The polynomial $p^4 \in \mathbb{Q}\mathbb{H}$ can again be viewed as a probability measure on $\mathbb{H}$, and the coefficient $c\coloneq p^4_{u_3}$ of $p^4$ at $u_3$ is strictly positive. If we set $q = c\cdot u_3$ and $r = p^4 - q$, then $qr=rq$ and Proposition~\ref{P-multiplier} yields an element $a\in \ell ^1(\mathbb{H},\mathbb{R})$ such that
	\begin{displaymath}
a(1-p^4) = a(1-(q+r)) = c^3(1-u_3)^3.
	\end{displaymath}
Then $b=a(1+p+p^2+p^3)/4c^3\in \ell ^1(\mathbb{H},\mathbb{R})$ satisfies that
	\begin{equation}
	\label{eq:b}
fb = bf = bf^* = 4(1-p)b=(1-u_3)^3.
	\end{equation}
It follows that $b\in W_f$ and $\eta (b)\in \Delta ^1(X_f)$ (cf. \eqref{eq:Wf} and Proposition \ref{p:homoclinic}).

We remark in passing that the existence of such a homoclinic point $b$ was conjectured in \cite{Goll}*{page 130}; in \cite{Goll}*{Theorem 4.1.2} it was shown that there is some $b'\in \ell ^1(\mathbb{H},\mathbb{R})$ satisfying $b'f^*=(1-u_3)^9$.

\smallskip Having found a nonzero element of $\Delta ^1(X_f)$ we claim that $\Delta ^1(X_f)$ is actually dense in $X_f$. To verify this we give an ad-hoc proof based on \cite{Goell1}*{Theorem 5.1}: for every $x\in X_f$ there exists a $y\in Y_f$ with $\eta (y)=x$ (for notation we refer to \eqref{eq:eta}). Then $v\coloneqq\bar{\rho }^fy\in \{-3,\dots , 3\}^\mathbb{H}\subset \ell ^\infty (\mathbb{H},\mathbb{Z})$ (cf. \eqref{eq:Wf}) and $(\bar{\rho }^f\circ \bar{\rho }^b)(v)= \bar{\rho }^{fb}v = \bar{\rho }^{(1-u_3)^3}v \in \ell ^\infty (\mathbb{H}, \mathbb{Z})$. It follows that $\bar{\rho }^bv\in W_f$ and $(\eta \circ \bar{\rho }^b)(v)=(\eta \circ \bar{\rho }^{bf})(y) = \rho ^{bf}(\eta (y)) =\rho ^{(1-u_3)^3}x$ by \eqref{eq:b}. We set $\mathcal{V} = \{-3,\dots , 3\}^\mathbb{H}\subset \ell ^\infty (\mathbb{H},\mathbb{Z})$ and conclude that $(\eta \circ \bar{\rho }^b)(\mathcal{V})\supseteq \rho ^{(1-u_3)^3}(X_f)$.

We recall that $X_f = \widehat{\mathbb{Z}\mathbb{H}/(f)} = (f)^\perp \subset \widehat{\mathbb{Z}\mathbb{H}}$ (cf. \eqref{eq:Xf}). If $\rho ^{1-u_3}(X_f)\subsetneq X_f$ there exists an element $h\in \mathbb{Z}\mathbb{H}$ such that $h\notin (f)$ and $\langle h,\rho ^{1-u_3}x \rangle = \langle h(1-u_3),x \rangle =1$ for every $x\in X_f$. Hence $h(1-u_3) = (1-u_3)h\in (f)$, i.e. $(1-u_3)h= gf$ for some $g\in \mathbb{Z}\mathbb{H}$.

We denote by $\langle u_3 \rangle $ the subgroup of $\mathbb{H}$ generated by $u_3$, set $\mathbb{H}' = \mathbb{H}/\langle u_3\rangle \cong \mathbb{Z}^2$, and denote by $\pi \colon \mathbb{Z}\mathbb{H} \to \mathbb{Z}\mathbb{H}' \cong \mathbb{Z}\mathbb{H}/(z_3-1)\mathbb{Z}\mathbb{H}$ the group ring homomorphism corresponding to the quotient map $\mathbb{H}\to \mathbb{H}'$. As $f$ is not divisible by $1-u_3$, $\pi (f)\ne 0$, but $\pi (gf)=\pi (g)\pi (f)=0$. Since $\mathbb{Z}\mathbb{H}' \cong \mathbb{Z}\mathbb{Z}^2$ is an integral domain we obtain that $\pi (g)=0$, i.e. that $g = (1-u_3)g'$ for some $g'\in \mathbb{Z}\mathbb{H}$. Since $\mathbb{Z}\mathbb{H}$ has no nontrivial zero divisors (cf. e.g. \cite{Passman}*{Theorem 13.1.11}) we obtain that $h=g'f$, contrary to our hypothesis that $h\notin \mathbb{Z}\mathbb{H}f$.

This contradiction implies that $X_f = \rho ^{1-u_3}(X_f) = \rho ^{(1-u_3)^3}(X_f) = (\eta \circ \bar{\rho }^b)(\mathcal{V})$. Since $\eta \circ \bar{\rho }^b\colon \mathcal{V} \to X_f$ is continuous and $\mathbb{Z}\mathbb{H}\cap \mathcal{V}$ is dense in $\mathcal{V}$ (both in the product topology on $\mathcal{V}$) we conclude that $(\eta \circ \bar{\rho }^b)(\mathbb{Z}\mathbb{H})$ is dense in $X_f$. Finally we note that $(\eta \circ \bar{\rho }^b)(\mathbb{Z}\mathbb{H})\subset \Delta ^1(X_f)$, so that $\Delta ^1(X_f)$ is dense in $X_f$, as promised.

According to Proposition \ref{p:homoclinic} this shows that $\lambda _f$ is intrinsically ergodic.
	\end{example}

The following corollary of Proposition \ref{P-multiplier} will allow us to extend the argument in Example \ref{E-Heisenberg} to the much more general setting of Theorem \ref{T-Harmonic}.

	\begin{corollary}
	\label{C-summable}
Assume that $\Gamma$ is infinite and not virtually $\mathbb{Z}$ or $\mathbb{Z}^2$. Let $p$ be a finitely supported symmetric probability measure on $\Gamma $ such that $\textup{supp}(p)$ generates $\Gamma $. By a result of Varopoulos {\textup(}cf. \cite{Varopoulos}, \cite{Furman}*{Theorem 2.1}, \cite{Woess}*{Theorem 3.24}{\textup)}, $\sum_{j=0}^\infty p^j$ converges in $\|\cdot \|_\infty$ to some $\omega$ in $C_0(\Gamma ,\mathbb{R})$. Then $(1-s)^3\omega\in \ell ^1(\Gamma ,\mathbb{R})$ for every $s$ in the center of $\Gamma$.
	\end{corollary}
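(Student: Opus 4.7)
The plan is to apply Proposition~\ref{P-multiplier} inside the unital Banach algebra $\mathcal{A} = \ell^1(\Gamma,\mathbb{R})$ (with convolution, norm $\|\cdot\|_1$, and unit $1_\Gamma$, the point mass at the identity) in order to produce an $\ell^1$-element $b$ with $b(1-p) = (1-s)^3$. The corollary will then follow from the identity $(1-s)^3\omega = b$, which I would obtain by exploiting that $(1-p)\omega = 1_\Gamma$ as elements of $\ell^\infty(\Gamma,\mathbb{R})$.

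For the algebraic step, first I would choose a suitable exponent $n$. Since $p$ is symmetric, $|\textup{supp}(p)|=1$ would force $\Gamma$ to be generated by an involution and hence be finite, contradicting our hypothesis; thus $|\textup{supp}(p)|\ge 2$. Consequently $|\textup{supp}(p^m)|\ge 2$ for all $m\ge 1$ (one sees this already for $m=2$ from $g_1^2\ne g_1g_2$), and because $\textup{supp}(p)$ generates $\Gamma$ one can choose $n$ with $p^n_s>0$ and $|\textup{supp}(p^n)|\ge 2$, so that $c := p^n_s$ lies in $(0,1)$. Then set $q := c\cdot s$ and $r := p^n - q$; both are non-negative in $\ell^1(\Gamma,\mathbb{R})$ with $\|q\|_1 = c$ and $\|r\|_1 = 1-c$. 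Centrality of $s$ gives $qr = rq$, so Proposition~\ref{P-multiplier} provides $a\in\ell^1(\Gamma,\mathbb{R})$ with $a(1-p^n) = (c-q)^3 = c^3(1-s)^3$. Using the factorization $1-p^n = (1-p)(1+p+\cdots+p^{n-1})$ in $\ell^1(\Gamma,\mathbb{R})$, I would set $b := c^{-3}\,a\,(1+p+\cdots+p^{n-1}) \in \ell^1(\Gamma,\mathbb{R})$, yielding $b(1-p) = (1-s)^3$.

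To close the argument, I would note that $\omega\in C_0(\Gamma,\mathbb{R})\subset\ell^\infty(\Gamma,\mathbb{R})$ and that the convergence $\omega_N := \sum_{j=0}^N p^j \to \omega$ in $\|\cdot\|_\infty$ forces $\|p^{N+1}\|_\infty\to 0$. Since $(1-p)\omega_N = 1_\Gamma - p^{N+1}$ and convolution by the $\ell^1$-element $1-p$ is continuous on $\ell^\infty(\Gamma,\mathbb{R})$, passing to the limit gives $(1-p)\omega = 1_\Gamma$. Associativity of the triple convolution $b\cdot(1-p)\cdot\omega$ is valid by Fubini, since the relevant double sum is bounded entrywise by $\|b\|_1\,\|1-p\|_1\,\|\omega\|_\infty<\infty$. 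Hence
$$(1-s)^3\,\omega \,=\, \bigl(b(1-p)\bigr)\omega \,=\, b\bigl((1-p)\omega\bigr) \,=\, b\cdot 1_\Gamma \,=\, b \,\in\, \ell^1(\Gamma,\mathbb{R}),$$
as desired.

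The principal technical step has already been shouldered by Proposition~\ref{P-multiplier}; the remaining points — producing $n$ with $0<c<1$, and justifying triple-convolution associativity — are routine. The hypothesis that $\Gamma$ is not virtually $\mathbb{Z}$ or $\mathbb{Z}^2$ enters only through Varopoulos's theorem to guarantee the existence of $\omega\in C_0(\Gamma,\mathbb{R})$; the argument above needs nothing further about the structure of $\Gamma$.
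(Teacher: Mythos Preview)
Your proof is correct and follows the same route as the paper's: apply Proposition~\ref{P-multiplier} with $q=c\cdot s$ and $r=p^n-q$ to produce $b\in\ell^1(\Gamma,\mathbb{R})$ satisfying $b(1-p)=(1-s)^3$, then identify $b$ with $(1-s)^3\omega$. The one difference is in this final identification: the paper observes that $\bigl((1-s)^3\omega-b\bigr)(1-p)=0$ and invokes the well-known fact that the only $x\in C_0(\Gamma,\mathbb{R})$ with $x(1-p)=0$ is $x=0$, whereas you obtain the equality directly from associativity of the triple convolution $b\cdot(1-p)\cdot\omega$ via Fubini---a slightly more self-contained variant that avoids appealing to the Liouville property for $p$-harmonic functions in $C_0$.
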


	\begin{proof}
It is easily checked that
	\begin{displaymath}
\omega (1-p)=1.
	\end{displaymath}
Let $s\ne 1_\Gamma $ be a central element of $\Gamma $. Then $s\in \textup{supp}(p^k)\smallsetminus \{1_\Gamma \}$ for some $k\in \mathbb{N}$. As in Example \ref{E-Heisenberg} there is some $a$ in $\ell ^1(\Gamma ,\mathbb{R})$ such that
	\begin{displaymath}
a(1-p^k)=(1-s)^3.
	\end{displaymath}
Then $b=a\sum_{j=0}^{k-1}p^j$ is in $\ell ^1(\Gamma ,\mathbb{R})$ and
	\begin{displaymath}
b(1-p)=(1-s)^3.
	\end{displaymath}
Note that $(1-s)^3\omega\in C_0(\Gamma ,\mathbb{R})$ and
	\begin{displaymath}
(1-s)^3\omega (1-p)=(1-s)^3=b(1-p).
	\end{displaymath}
It is well known that if $x\in C_0(\Gamma ,\mathbb{R})$ satisfies $x(1-p)=0$, then $x=0$. Thus
	\begin{displaymath}
(1-s)^3 \omega=b\in \ell ^1(\Gamma ,\mathbb{R}).
\tag*{\qedsymbol}
	\end{displaymath}
\renewcommand{\qedsymbol}{}\vspace{-\baselineskip}
	\end{proof}

	\begin{proof}[Proof of Theorem \ref{T-Harmonic}]
Since $f$ is well-balanced, one has $f=f_{1_\Gamma}(1-p)$ for some symmetric probability measure $p$ on $\Gamma$ such that $\textup{supp}(p)$ generates $\Gamma$.

Since $\Gamma$ is not virtually $\mathbb{Z}$ or $\mathbb{Z}^2$, $\omega=\sum_{j=0}^\infty p^j$ is in $C_0(\Gamma ,\mathbb{R})$. Then
	\begin{displaymath}
(f_{1_\Gamma}^{-1}\omega) f=1.
	\end{displaymath}
By \cite{Bowen-Li}*{Theorem 4.1 and Lemma 4.10}, the group $\Delta(X_f)$ of homoclinic points of $\lambda _f$ is dense in $X_f$ and is the $\Gamma $-invariant subgroup of $X_f$ generated by $\eta (f_{1_\Gamma}^{-1}\omega)$. Thus it suffices to show that $\eta (f_{1_\Gamma}^{-1}\omega)$ is in the closure of $\Delta ^1(X_f)$.

By assumption we can find a sequence $(s_n)_{n\ge1}$ in the center $H$ of $\Gamma $ such that for any finite subset $F$ of $\Gamma$ one has $s_n, s_n^2, s_n^3\not\in F$ for all large enough $n$. Then $\eta ((1-s_n)^3f_{1_\Gamma}^{-1}\omega)$ converges to $\eta (f_{1_\Gamma}^{-1}\omega)$ as $n\to \infty$. By Corollary~\ref{C-summable} one has $(1-s_n)^3f_{1_\Gamma}^{-1}\omega\in \ell ^1(\Gamma ,\mathbb{R})$ and hence $\eta ((1-s_n)^3f_{1_\Gamma}^{-1}\omega)\in \Delta ^1(X_f)$ for each $n$. Therefore $\eta (f_{1_\Gamma}^{-1}\omega)$ lies in the closure of $\Delta ^1(X_f)$, as desired.
	\end{proof}

	\begin{remarks}
	\label{R-Z2}
(1) For $\Gamma =\mathbb{Z}^d$ with $d\ge1$, \cite{LSV2}*{Corollary 3.4} shows that any atoral polynomial $f\in \mathbb{Z}[\mathbb{Z}^d]$ which is not a unit in $\mathbb{Z}[\mathbb{Z}^d]$ satisfies that $\Delta ^1(X_f)$ is dense in $X_f$ (for the definition of atorality we refer to \cite{LSV2}*{Definition 2.1 and Proposition 2.2}). In particular, Theorem~\ref{T-Harmonic} also holds for $\Gamma=\mathbb{Z}^2$. Does Theorem~\ref{T-Harmonic} hold for virtually $\mathbb{Z}^2$?

\medskip (2) For the polynomial $h = 2-u_1-u_2 \in \mathbb{Z}\mathbb{H}$ there exists a nonzero element $w\in \ell ^1(\mathbb{H},\mathbb{R})$ such that $wh = hw = (1-u_3)^2$ (cf. \cite{Goell1}*{Theorem 4.2}). As in Subsection \ref{sss:expansive}, the map $\bar{\rho }^w\colon \ell ^\infty (\mathbb{H},\mathbb{Z}) \to \ell ^\infty (\mathbb{H},\mathbb{R})$ is continuous in the weak$^*$-topology on closed, bounded subsets of $\ell ^\infty (\mathbb{H},\mathbb{Z})$, and the map $\xi \coloneqq \eta \circ \bar{\rho }^w \colon \ell ^\infty (\mathbb{H},\mathbb{Z})\to \mathbb{T}^\mathbb{H}$ satisfies that $\xi (\{-1,0,1\}^\mathbb{H}) = X_{h}$ (cf. \cite{Goell1}*{Theorem 5.1}). It follows that $\xi (\mathbb{Z}\mathbb{H})\subset \Delta ^1(X_{h})$. Hence $\Delta ^1(X_h)$ is dense in $X_h$ and $\lambda _h$ is intrinsically ergodic.

\smallskip Is there any way to use an argument similar to Proposition~\ref{P-multiplier} to prove the existence of summable homoclinic points for this and other `asymmetric' elements of $h \in \mathbb{Z}\mathbb{H}$ ?
	\end{remarks}

	\begin{bibdiv}
	\begin{biblist}

\bib{Bowen-Li}{article}{
author={Bowen, Lewis},
author={Li, Hanfeng},
title={Harmonic models and spanning forests of residually finite groups},
journal={J. Funct. Anal.},
volume={263},
date={2012},
number={7},
pages={1769--1808},
}

\bib{Bowen}{article}{
author={Bowen, Rufus},
title={Markov partitions for Axiom ${\rm A}$ diffeomorphisms},
journal={Amer. J. Math.},
volume={92},
date={1970},
pages={725--747},
}

\bibitem{CCL} Tullio Ceccherini-Silberstein, Miel Coornaert, and Hanfeng Li, \textit{Expansive actions with specification of sofic groups, strong topological Markov property, and surjunctivity}, Preprint (2021).

\bib{Chung-Li}{article}{
author={Chung, Nhan-Phu},
author={Li, Hanfeng},
title={Homoclinic groups, IE groups, and expansive algebraic actions},
journal={Invent. Math.},
volume={199},
date={2015},
number={3},
pages={805--858},
}

\bib{Danilenko}{article}{
author={Danilenko, Alexandre I.},
title={Entropy theory from the orbital point of view},
journal={Monatsh. Math.},
volume={134},
date={2001},
number={2},
pages={121--141},
}

\bib{D06}{article}{
author={Deninger, Christopher},
title={Fuglede-Kadison determinants and entropy for actions of discrete amenable groups},
journal={J. Amer. Math. Soc.},
volume={19},
date={2006},
number={3},
pages={737--758},
}

\bib{Deninger}{article}{
author={Deninger, Christopher},
title={Determinants on von Neumann algebras, Mahler measures and Ljapunov
exponents},
journal={J. Reine Angew. Math.},
volume={651},
date={2011},
pages={165--185},
}

\bib{Deninger-S}{article}{
author={Deninger, Christopher},
author={Schmidt, Klaus},
title={Expansive algebraic actions of discrete residually finite amenable
groups and their entropy},
journal={Ergodic Theory Dynam. Systems},
volume={27},
date={2007},
number={3},
pages={769--786},
}

\bib{E-R}{article}{
author={Einsiedler, Manfred},
author={Rindler, Harald},
title={Algebraic actions of the discrete Heisenberg group and other
non-abelian groups},
journal={Aequationes Math.},
volume={62},
date={2001},
number={1-2},
pages={117--135},
}

\bib{E-S}{article}{
author={Einsiedler, Manfred},
author={Schmidt, Klaus},
title={Markov partitions and homoclinic points of algebraic $\mathbf{Z}^d$-actions},
journal={Tr. Mat. Inst. Steklova},
volume={216},
date={1997},
number={Din. Sist. i Smezhnye Vopr.},
pages={265--284},
translation={
journal={Proc. Steklov Inst. Math.},
date={1997},
number={1(216)},
pages={259--279},
},
}

\bib{Furman}{article}{
author={Furman, Alex},
title={Random walks on groups and random transformations},
conference={
title={Handbook of dynamical systems, Vol. 1A},
},
book={
publisher={North-Holland, Amsterdam},
},
date={2002},
pages={931--1014},
}

\bibitem{Goll} Martin G\"{o}ll, \textit{Principal algebraic actions of the discrete Heisenberg group}, PhD Thesis, University of Leiden (2015), 1--167.

\bib{Goell1}{article}{
author={G\"{o}ll, Martin},
author={Schmidt, Klaus},
author={Verbitskiy, Evgeny},
title={Algebraic actions of the discrete Heisenberg group: expansiveness
and homoclinic points},
journal={Indag. Math. (N.S.)},
volume={25},
date={2014},
number={4},
pages={713--744},
}

\bib{Hayes}{article}{
author={Hayes, Ben},
title={Fuglede-Kadison determinants and sofic entropy},
journal={Geom. Funct. Anal.},
volume={26},
date={2016},
number={2},
pages={520--606},
}

\bib{K-V}{article}{
author={Kenyon, Richard},
author={Vershik, Anatoly},
title={Arithmetic construction of sofic partitions of hyperbolic toral
automorphisms},
journal={Ergodic Theory Dynam. Systems},
volume={18},
date={1998},
number={2},
pages={357--372},
}

\bib{KL16}{book}{
author={Kerr, David},
author={Li, Hanfeng},
title={Ergodic theory: Independence and Dichotomies},
series={Springer Monographs in Mathematics},
publisher={Springer, Cham},
date={2016},
pages={xxxiv+431},
}

\bib{L-B}{article}{
author={Le Borgne, St\'{e}phane},
title={Un codage sofique des automorphismes hyperboliques du tore},
language={French, with English and French summaries},
journal={C. R. Acad. Sci. Paris S\'{e}r. I Math.},
volume={323},
date={1996},
number={10},
pages={1123--1128},
}

\bib{Li}{article}{
author={Li, Hanfeng},
title={Compact group automorphisms, addition formulas and Fuglede-Kadison determinants},
journal={Ann. of Math. (2)},
volume={176},
date={2012},
number={1},
pages={303--347},
}

\bib{Li-Thom}{article}{
author={Li, Hanfeng},
author={Thom, Andreas},
title={Entropy, determinants, and $L^2$-torsion},
journal={J. Amer. Math. Soc.},
volume={27},
date={2014},
number={1},
pages={239--292},
}

\bib{Lind}{article}{
author={Lind, Douglas A.},
title={Dynamical properties of quasihyperbolic toral automorphisms},
journal={Ergodic Theory Dynam. Systems},
volume={2},
date={1982},
number={1},
pages={49--68},
}

\bib{homoclinic}{article}{
author={Lind, Douglas},
author={Schmidt, Klaus},
title={Homoclinic points of algebraic ${\bf Z}^d$-actions},
journal={J. Amer. Math. Soc.},
volume={12},
date={1999},
number={4},
pages={953--980},
}

\bib{Lind-S}{article}{
author={Lind, Douglas},
author={Schmidt, Klaus},
title={A survey of algebraic actions of the discrete Heisenberg group},
journal={Russian Math. Surveys},
volume={70},
date={2015},
number={4},
pages={657--714},
}

\bib{Lind-S-2}{article}{
author={Lind, Douglas},
author={Schmidt, Klaus},
title={New examples of Bernoulli algebraic actions},
journal={Ergodic Theory Dynam. Systems},
volume={42},
date={2022},
number={9},
pages={2923--2934},
}

\bib{LSV2}{article}{
author={Lind, Douglas},
author={Schmidt, Klaus},
author={Verbitskiy, Evgeny},
title={Homoclinic points, atoral polynomials, and periodic points of
algebraic $\mathbb{Z}^d$-actions},
journal={Ergodic Theory Dynam. Systems},
volume={33},
date={2013},
number={4},
pages={1060--1081},
}

\bib{Pajor}{book}{
author={Pajor, Alain},
title={Sous-espaces $l^n_1$ des espaces de Banach},
language={French},
series={Travaux en Cours},
volume={16},
note={With an Introduction by Gilles Pisier},
publisher={Hermann, Paris},
date={1985},
pages={xii+112},
}

\bib{Partha}{book}{
author={Parthasarathy, Kalyanapuram R.},
title={Probability measures on metric spaces},
series={Probability and Mathematical Statistics, No. 3},
publisher={Academic Press, Inc., New York-London},
date={1967},
pages={xi+276},
}

\bib{Passman}{book}{
author={Passman, Donald S.},
title={The algebraic structure of group rings},
series={Pure and Applied Mathematics},
publisher={Wiley-Interscience [John Wiley \& Sons], New
York-London-Sydney},
date={1977},
pages={xiv+720},
}

\bib{Sauer}{article}{
author={Sauer, Norbert},
title={On the density of families of sets},
journal={J. Combinatorial Theory Ser. A},
volume={13},
date={1972},
pages={145--147},
}

\bib{DSAO}{book}{
author={Schmidt, Klaus},
title={Dynamical systems of algebraic origin},
series={Progress in Mathematics},
volume={128},
publisher={Birkh\"{a}user Verlag, Basel},
date={1995},
pages={xviii+310},
}

\bib{Leiden}{article}{
author={Schmidt, Klaus},
title={Representations of toral automorphisms},
journal={Topology Appl.},
volume={205},
date={2016},
pages={88--116},
}

\bib{Shelah}{article}{
author={Shelah, Saharon},
title={A combinatorial problem; stability and order for models and theories in infinitary languages},
journal={Pacific J. Math.},
volume={41},
date={1972},
pages={247--261},
}

\bib{Sinai}{article}{
author={Sina\u{\i}, Jakov G.},
title={Construction of Markov partitionings},
language={Russian},
journal={Funkcional. Anal. i Prilo\v{z}en.},
volume={2},
date={1968},
number={3},
pages={70--80},
}

\bib{Varopoulos}{article}{
author={Varopoulos, Nicholas Th.},
title={Long range estimates for Markov chains},
language={English, with French summary},
journal={Bull. Sci. Math. (2)},
volume={109},
date={1985},
number={3},
pages={225--252},
}

\bib{Vershik}{article}{
author={Vershik, Anatoly M.},
title={Arithmetic isomorphism of hyperbolic automorphisms of a torus and
of sofic shifts},
language={Russian},
journal={Funktsional. Anal. i Prilozhen.},
volume={26},
date={1992},
number={3},
pages={22--27},
translation={
journal={Funct. Anal. Appl.},
volume={26},
date={1992},
number={3},
pages={170--173},
},
}

\bib{Weiss}{article}{
author={Weiss, Benjamin},
title={Intrinsically ergodic systems},
journal={Bull. Amer. Math. Soc.},
volume={76},
date={1970},
pages={1266--1269},
}

\bib{Wiki}{article}{
author = {Wikipedia contributors},
title = {Sauer–Shelah lemma. In {Wikipedia}{,} The Free Encyclopedia},
note = {[accessed 21. October 2021]},
}

\bib{Woess}{book}{
author={Woess, Wolfgang},
title={Random walks on infinite graphs and groups},
series={Cambridge Tracts in Mathematics},
volume={138},
publisher={Cambridge University Press, Cambridge},
date={2000},
pages={xii+334},
}

	\end{biblist}
	\end{bibdiv}
	\end{document}